\documentclass[letterpaper,11pt]{article}
\usepackage[margin=1in]{geometry}  % set the margins to 1in on all sides
\makeatletter
\def\input@path{{./}{arxiv/}} % Allow building from repo root or arxiv/
\makeatother
\usepackage{bbm}
\usepackage{graphicx}
\usepackage{amsmath,amssymb,amsthm,amsfonts}
\usepackage{natbib, algorithm2e}
\usepackage{paralist}
\usepackage{bm}
\usepackage{xspace}
\usepackage{url}
\usepackage{prettyref}
\usepackage{boxedminipage}
\usepackage{wrapfig}
\usepackage{ifthen}
\usepackage{color}
\usepackage{xspace}
\usepackage{pgfplots}
\pgfplotsset{compat=1.18}

\usepackage{amsmath,amsthm,amsfonts,amssymb}
\usepackage{graphicx}

\usepackage{nicefrac}

\newtheorem*{definition*}{Definition}

\DeclareMathOperator*{\argmax}{arg\,max}

\usepackage{subcaption}

\usepackage[utf8]{inputenc}

\usepackage{xcolor}
\definecolor{expert}{HTML}{008000}
\definecolor{error}{HTML}{f96565}

\usepackage{color-edits}
\addauthor{sw}{blue}

\usepackage{thmtools}
\usepackage{thm-restate}

\usepackage{tikz}
\usetikzlibrary{arrows,calc} 
\newcommand{\tikzAngleOfLine}{\tikz@AngleOfLine}
\def\tikz@AngleOfLine(#1)(#2)#3{%
\pgfmathanglebetweenpoints{%
\pgfpointanchor{#1}{center}}{%
\pgfpointanchor{#2}{center}}
\pgfmathsetmacro{#3}{\pgfmathresult}%
}

\declaretheoremstyle[
    headfont=\normalfont\bfseries, 
    bodyfont = \normalfont\itshape]{mystyle} 
\usepackage{listings}
\usepackage{amsmath}
\usepackage{amsthm}
\usepackage{tikz}
\usepackage{caption}
\usepackage{mdwmath}
\usepackage{multirow}
\usepackage{mdwtab}
\usepackage{eqparbox}
\usepackage{multicol}
\usepackage{amsfonts}
\usepackage{tikz}
\usepackage{multirow,bigstrut,threeparttable}
\usepackage{amsthm}
\usepackage{bbm}
\usepackage{epstopdf}
\usepackage{mdwmath}
\usepackage{mdwtab}
\usepackage{eqparbox}
\usetikzlibrary{topaths,calc}
\usepackage{latexsym}
\usepackage{cite}
\usepackage{amssymb}
\usepackage{bm}
\usepackage{amssymb}
\usepackage{graphicx}
\usepackage{mathrsfs}
\usepackage{epsfig}
\usepackage{psfrag}
\usepackage{setspace}
\usepackage[%dvips,
            CJKbookmarks=true,
            bookmarksnumbered=true,
			colorlinks=true,
            bookmarksopen=true,
            colorlinks=true,
            citecolor=red,
            linkcolor=blue,
            anchorcolor=red,
            urlcolor=blue
            ]{hyperref}

\usepackage{comment}
\usepackage{mathtools}
\usepackage{blkarray}
\usepackage{multirow,bigdelim,dcolumn,booktabs}
\usepackage{graphicx}
\usepackage{subcaption}

\usepackage{xparse}
\usepackage{tikz}
\usetikzlibrary{calc}
\usetikzlibrary{decorations.pathreplacing,matrix,positioning}

\usepackage[T1]{fontenc}
\usepackage[utf8]{inputenc}
\usepackage{mathtools}
\usepackage{blkarray, bigstrut}
\usepackage{gauss}

\newcommand*{\BraceAmplitude}{0.4em}%
\newcommand*{\VerticalOffset}{0.5ex}%  
\newcommand*{\HorizontalOffset}{0.0em}% 
\newcommand*{\blocktextwid}{3.0cm}%
\NewDocumentCommand{\InsertLeftBrace}{%
	O{} % #1 = draw options
	O{\HorizontalOffset,\VerticalOffset} % #2 = optional brace shift options
	O{\blocktextwid} % #3 = optional text width
	m   % #4 = top tikzmark
	m   % #5 = bottom tikzmark
	m   % #6 = node text
}{%
	\begin{tikzpicture}[overlay,remember picture]
	\coordinate (Brace Top)    at ($(#4.north) + (#2)$);
	\coordinate (Brace Bottom) at ($(#5.south) + (#2)$);
	\draw [decoration={brace, amplitude=\BraceAmplitude}, decorate, thick, draw=black, #1]
	(Brace Bottom) -- (Brace Top) 
	node [pos=0.5, anchor=east, align=left, text width=#3, color=black, xshift=\BraceAmplitude] {#6};
	\end{tikzpicture}%
}%
\NewDocumentCommand{\InsertRightBrace}{%
	O{} % #1 = draw options
	O{\HorizontalOffset,\VerticalOffset} % #2 = optional brace shift options
	O{\blocktextwid} % #3 = optional text width
	m   % #4 = top tikzmark
	m   % #5 = bottom tikzmark
	m   % #6 = node text
}{%
	\begin{tikzpicture}[overlay,remember picture]
	\coordinate (Brace Top)    at ($(#4.north) + (#2)$);
	\coordinate (Brace Bottom) at ($(#5.south) + (#2)$);
	\draw [decoration={brace, amplitude=\BraceAmplitude}, decorate, thick, draw=black, #1]
	(Brace Top) -- (Brace Bottom) 
	node [pos=0.5, anchor=west, align=left, text width=#3, color=black, xshift=\BraceAmplitude] {#6};
	\end{tikzpicture}%
}%
\NewDocumentCommand{\InsertTopBrace}{%
	O{} % #1 = draw options
	O{\HorizontalOffset,\VerticalOffset} % #2 = optional brace shift options
	O{\blocktextwid} % #3 = optional text width
	m   % #4 = top tikzmark
	m   % #5 = bottom tikzmark
	m   % #6 = node text
}{%
	\begin{tikzpicture}[overlay,remember picture]
	\coordinate (Brace Top)    at ($(#4.west) + (#2)$);
	\coordinate (Brace Bottom) at ($(#5.east) + (#2)$);
	\draw [decoration={brace, amplitude=\BraceAmplitude}, decorate, thick, draw=black, #1]
	(Brace Top) -- (Brace Bottom) 
	node [pos=0.5, anchor=south, align=left, text width=#3, color=black, xshift=\BraceAmplitude] {#6};
	\end{tikzpicture}%
}%

\usetikzlibrary{patterns}

\definecolor{cof}{RGB}{219,144,71}
\definecolor{pur}{RGB}{186,146,162}
\definecolor{greeo}{RGB}{91,173,69}
\definecolor{greet}{RGB}{52,111,72}

\newtheorem{theorem}{Theorem}[section]
\newtheorem{lemma}[theorem]{Lemma}

\newtheorem{definition}[theorem]{Definition}
\newtheorem{corollary}[theorem]{Corollary}

\newtheorem{remark}[theorem]{Remark} 
\DeclarePairedDelimiter\abs{\lvert}{\rvert}

\def \bP {\mathbb{P}}

\def \bC {\mathbb{C}}
\def \bE {\mathbb{E}}
\def \bR {\mathbb{R}}

\def\1{\mathbbm{1}}

\usepackage{xspace}

\newcommand{\stepa}[1]{\overset{\rm (a)}{#1}}
\newcommand{\stepb}[1]{\overset{\rm (b)}{#1}}
\newcommand{\stepc}[1]{\overset{\rm (c)}{#1}}
\newcommand{\stepd}[1]{\overset{\rm (d)}{#1}}
\newcommand{\stepe}[1]{\overset{\rm (e)}{#1}}

\newcommand{\TV}{{\sf TV}}

\newcommand{\KL}{{\sf KL}}

\newcommand{\pth}[1]{\left( #1 \right)}
\newcommand{\qth}[1]{\left[ #1 \right]}
\newcommand{\sth}[1]{\left\{ #1 \right\}}
\newcommand{\bpth}[1]{\Big( #1 \Big)}
\newcommand{\bqth}[1]{\Big[ #1 \Big]}
\newcommand{\bsth}[1]{\Big\{ #1 \Big\}}

\newcommand{\Unif}{\text{\rm Unif}}

\newcommand{\indc}[1]{{\mathbf{1}_{\left\{{#1}\right\}}}}

\definecolor{myblue}{rgb}{.8, .8, 1}
\definecolor{mathblue}{rgb}{0.2472, 0.24, 0.6} % mathematica's Color[1, 1--3]
\definecolor{mathred}{rgb}{0.6, 0.24, 0.442893}
\definecolor{mathyellow}{rgb}{0.6, 0.547014, 0.24}

\newcommand{\calF}{{\mathcal{F}}}
\newcommand{\calG}{{\mathcal{G}}}
\newcommand{\calH}{{\mathcal{H}}}

\newcommand{\calL}{{\mathcal{L}}}

\newcommand{\calN}{{\mathcal{N}}}

\newcommand{\calP}{{\mathcal{P}}}

\newcommand{\calX}{{\mathcal{X}}}

\newcommand{\rmd}{\mathrm{d}}

\newcommand{\EB}{\mathrm{EB}}
\newcommand{\sigmin}{\sigma_{\min}}
\newcommand{\sigmax}{\sigma_{\max}}
\newcommand{\rL}{\mathrm{L}}
\newcommand{\rU}{\mathrm{U}}

\usepackage[nameinlink, noabbrev, capitalize]{cleveref}
\crefname{lemma}{Lemma}{Lemmas}
\Crefname{lemma}{Lemma}{Lemmas}
\crefname{thm}{Theorem}{Theorems}
\Crefname{thm}{Theorem}{Theorems}
\crefname{corollary}{Corollary}{Corollaries}
\Crefname{corollary}{Corollary}{Corollaries}
\crefname{remark}{Remark}{Remarks}
\Crefname{remark}{Remark}{Remarks}
\crefname{definition}{Definition}{Definitions}
\Crefname{definition}{Definition}{Definitions}
\Crefname{assumption}{Assumption}{Assumptions}
\crefformat{equation}{(#2#1#3)}

\newcommand{\muknownsig}{\widehat{\mu}_{\mathrm{known}-\sigma}}

\newcommand{\empmean}{\overline{\mu}}

\DeclareMathOperator{\supp}{supp}

\newcommand{\bracketing}{N_{[\,]}}

\newcommand{\R}{\mathbb{R}}

\begin{document}

\title{An Empirical Bayes Perspective on Heteroskedastic Mean Estimation}
\author{Yanjun Han, Abhishek Shetty, and Jacob Shkrob}
\maketitle

\begin{abstract}
Towards understanding the fundamental limits of estimation from data of varied quality, we study the problem of estimating a mean parameter from heteroskedastic Gaussian observations where the variances are unknown and may vary arbitrarily across observations.
While a simple linear estimator with known variances attains the smallest mean squared error, estimation without this knowledge is challenging due to the large number of nuisance parameters.
We propose a simple and principled approach based on empirical Bayes: model the observations as if they were i.i.d. from a normal scale mixture and compute the profile maximum likelihood estimator (MLE) for the mean, treating the nonparametric mixing distribution as nuisance.
Our result shows that this estimator achieves near-optimal error bounds across various heteroskedastic models in the literature. 
In particular, for the subset-of-signals problem where an unknown subset of observations has small variance, our estimator adaptively achieves the minimax rate for all signal sizes, including the sharp phase transition, without any tuning parameters. 

One of our key technical steps is a sharper metric entropy bound for normal scale mixtures, obtained via Chebyshev approximations on a transformed polynomial basis. 
This approach yields an improved polylogarithmic, rather than polynomial, dependence on the variance ratio, which could be of independent interest.
\end{abstract}

\tableofcontents

\section{Introduction}

Estimation of a signal from heterogeneous data lies at the heart of statistics and machine learning.
The heterogeneity can arise from variations in data quality, measurement precision, or sampling conditions and is ubiquitous in real-world applications. 
Thus, it is a fundamental statistical challenge to develop principled methods that allow one to estimate signals robustly from heterogeneous data with minimal knowledge of the data quality.

{Perhaps the simplest} abstraction of this general objective is estimation of a single one dimensional parameter from data of varying quality.
In particular, given independent observations $X_1,\dots,X_n$ with $X_i\sim \calN(\mu,\sigma_i^2)$, the learner's target is to estimate the mean parameter $\mu\in \bR$. 
Even in this simple one dimensional setting, heterogeneity leads to statistical challenges.
%  because of its heterogeneous nature, i.e., the variance parameters $\sigma_i^2$ are \emph{unknown} and could vary across different observations.
In the case {when} $(\sigma_1,\dots,\sigma_n)$ are known, the maximum likelihood estimator (MLE) for $\mu$ is
\begin{align}\label{eq: mle known si}
\muknownsig = \frac{\sum_{i=1}^n \frac{X_i}{\sigma_i^2} }{\sum_{i=1}^n \frac{1}{\sigma_i^2}}, 
\end{align}
which is also the uniformly minimum-variance unbiased estimator (UMVUE) and achieves the optimal error rate of $\bE[\abs{\widehat{\mu}-\mu}] =  (\sum_{i=1}^n \frac{1}{\sigma_i^2})^{-1/2} $.
{The key aspect of this estimator} is that this bound is \emph{robust} to large variances. 
For example, if there are $n/2$ variances that are large while the other $n/2$ variances are small, then the error rate is still $O(1/\sqrt{n})$, as the large variances are effectively ignored in the weighted average.
Compare this to the error of the sample average $ \empmean = \frac{1}{n}\sum_{i=1}^n X_i$, which has error rate $\bE[( \empmean - \mu)^2] = \frac{1}{n^2}\sum_{i=1}^n \sigma_i^2$ and could be arbitrarily large even if a single variance is large. 
On the other hand, $ \muknownsig $ critically relies on the knowledge of $(\sigma_1,\dots,\sigma_n)$, which could not be estimated reliably simply because the number of unknown parameters $n+1$ is more than the number of observations $n$ while the sample average $\empmean $ is oblivious to the knowledge of $(\sigma_1,\dots,\sigma_n)$. 
The key question that we would like to address is:

\begin{quote}
    Are there general principled methods that allow one to estimate signals robustly from heterogeneous data with minimal knowledge of the data quality?
\end{quote}

% However, this estimator critically relies on the knowledge of $(\sigma_1,\dots,\sigma_n)$, which could not be estimated reliably simply because the number of unknown parameters $n+1$ is more than the number of observations $n$. If one na\"ively use the sample average $\widehat{\mu} = \overline{X}_n$, this estimator is not robust to outliers in the data with very large $\sigma_i$. 

% To fix this issue, the literature turns to robust estimators such as sample median and many of its variants. 

An important special case considered in the literature is the ``subset of signals'' problem \citep{liang2020learning}, where it is assumed that $|\sth{i\in [n]: \sigma_i\le 1}| \ge m$, i.e., $m$ out of $n$ observations can be treated as ``signals'' (although the location of this subset of signals is still unknown).
Even for this simple setting, the minimax rate has only been recently characterized \citep{compton2024near}:
% This problem is well understood, with the minimax mean squared error (MSE) characterized \citep{}: 
there exists an estimator $\widehat{\mu}$ with
% TODO: Add figure illustrating minimax rates showing phase transition at m ~ n^{1/4}
\begin{align}\label{eq:minimax_risk}
\bE |\widehat{\mu} - \mu| = \begin{cases}
    \widetilde{O}\bpth{\bpth{\frac{n}{m^4}}^{1/2}} &\text{if }\log n\ll m\le n^{1/4}, \\
    \widetilde{O}\bpth{\bpth{\frac{n}{m^4}}^{1/6}} & \text{if } n^{1/4}\le m\le n, 
\end{cases}
\end{align}
and this dependence on $(m,n)$ (including the phase transition at $m\asymp n^{1/4}$) is not improvable in the worst case.\footnote{Here and throughout, $\widetilde{O}(\cdot)$ hides polylogarithmic factors in $n$.} While the minimax rate is known, existing estimators \citep{compton2024near} achieving this minimax rate are often quite complicated, requiring multiple tunable parameters and are reliant on the particular formulation of the problem, and tend not to lead to unifying statistical principles for learning from heterogeneous data.
% Although this minimax rate of convergence has been fully characterized, the proposed estimators often has unwieldy features: they require numerous tuning parameters and rely on knowledge of $m$ \YH{check}, which is frequently unavailable in practice. Moreover, at a high level, existing optimal estimators are typically constructed in a case-by-case manner, rather than arising from a unified statistical principle.

Towards building a unified theory of heteroskedasticity in estimation, we make a perhaps surprising connection to the study of \emph{empirical Bayes} methods.
Using this connection, we show that the simple principle of maximum likelihood estimation, in conjunction with an empirical Bayes framework, leads to a unified estimator that achieves the minimax rate adaptively for all ranges of $m$. 
% In this paper, we provide a surprising redemption of the MLE principle with the help of \emph{empirical Bayes}: 
In this framework, instead of modeling the heterogeneous variances $\sigma_1,\dots,\sigma_n$ separately, we model them using their empirical distribution $G_n := \frac{1}{n}\sum_{i=1}^n \delta_{\sigma_i}$ and treat the observations $X_1,\dots,X_n$ \emph{as if} they were \emph{i.i.d.} drawn from a normal scale mixture $f_{\mu,G_n}(x) := \bE_{\sigma\sim G_n}[\frac{1}{\sigma}\varphi(\frac{x-\mu}{\sigma})]$, where $\varphi$ denotes the standard normal density. 
% \footnote{Here $\phi$ refers to the Gaussian density $\phi(x) = 1/ \sqrt{2 \pi } \cdot \exp(-x^2/2)$}. 
Under this i.i.d. model, the joint MLE $(\widehat{\mu}, \widehat{G})$ is naturally defined as 
\begin{align}\label{eq:EB_MLE}
(\widehat{\mu}, \widehat{G}) = \argmax_{\mu\in \bR, \supp(G)\subseteq [\sigmin, \sigmax]} \frac{1}{n}\sum_{i=1}^n \log f_{\mu, G}(X_i), 
\end{align}
and the final estimator is simply defined as $\widehat{\mu}^{\EB} = \widehat{\mu}$ in \eqref{eq:EB_MLE}. In other words, our estimator $\widehat{\mu}^{\EB}$ is the profile MLE \citep{murphy2000profile} of $\mu$, treating $G$ as a nuisance parameter. Note that the nuisance estimator $\widehat{G}$ in \eqref{eq:EB_MLE} is a \emph{nonparametric} MLE (NPMLE), since it may be any probability distribution supported on $[\sigmin, \sigmax]$. In particular, this formulation no longer enforces the structure of an empirical distribution, nor does it explicitly require the ``subset-of-signals'' structure $G([0,1]) \ge \frac{m}{n}$. Here $\sigmin, \sigmax > 0$ are hyperparameters used in the MLE and satisfy $\sigmin \le \sigma_i \le \sigmax$ for every $i \in [n]$; further discussion on them are provided later. 

\subsection{Main results}
Our first result shows that, although the estimator $\widehat{\mu}^{\EB}$ is developed from an entirely different principle compared with existing estimators, it nevertheless has an instance-dependent error bound.

\begin{theorem}\label{thm:general}
Let $\sigma_i\in [\sigmin, \sigmax]$ for all $i\in [n]$. With probability at least $1-\delta$, the estimator $\widehat{\mu}^{\EB}$ in \eqref{eq:EB_MLE} achieves (the exact logarithmic factor is displayed in \Cref{lemma:density_estimation})
\begin{align*}
|\widehat{\mu}^{\EB} - \mu| \le C\omega_{H^2, G_n}\bpth{\frac{\mathrm{polylog}(n, \frac{\sigmax}{\sigmin}, \frac{1}{\delta})}{n}}, 
\end{align*}
where $\omega_{H^2,G_n}(t)$ is the Hellinger modulus of continuity in the location family: 
\begin{align*}\label{eq:modulus-of-continuity}
\omega_{H^2,G_n}(t) = \sup\sth{ |\mu_1-\mu_2|: \mu_1, \mu_2\in \bR, H^2(f_{\mu_1,G_n}, f_{\mu_2,G_n}) \le t }. 
\end{align*}
\end{theorem}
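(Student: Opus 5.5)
The plan has two steps. First I prove a density estimation guarantee: the fitted mixture $f_{\widehat\mu,\widehat G}$ is close in Hellinger distance to the ``true'' mixture $f_{\mu,G_n}$, at rate $\epsilon_n^2=\mathrm{polylog}(n,\sigmax/\sigmin,1/\delta)/n$. This is the content of \Cref{lemma:density_estimation}. Second, I convert this density bound into a bound on $|\widehat\mu-\mu|$ through the modulus $\omega_{H^2,G_n}$.

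For the first step I use the standard NPMLE analysis for independent but non-identically distributed data, in the style of Zhang's or Ghosal--van der Vaart's analysis of Gaussian mixture NPMLEs. The key observation is that $X_i\sim \calN(\mu,\sigma_i^2)$ independently. Hence for any fixed density $f$, the expected average log-likelihood ratio $\frac1n\sum_i \bE\log\frac{f}{f_{\mu,G_n}}(X_i)$ depends on the $\sigma_i$ only through the average of the densities, which is exactly $f_{\mu,G_n}$. The Bernstein/Chernoff argument therefore goes through as in the i.i.d.\ case. Concretely:
\begin{itemize}
\item I use $\bE\prod_i\sqrt{f(X_i)/f_{\mu,G_n}(X_i)}=\prod_i \int\sqrt{f\,\varphi_{\sigma_i}(\cdot-\mu)}$.
\item By AM--GM this product is at most $\big(\int \sqrt{f}\,\tfrac1n\sum_i\sqrt{\varphi_{\sigma_i}}\big)^n$.
\item By Jensen (concavity of the square root), $\tfrac1n\sum_i\sqrt{\varphi_{\sigma_i}}\le\sqrt{f_{\mu,G_n}}$, so the bound becomes $(1-H^2(f,f_{\mu,G_n}))^n\le e^{-nH^2}$.
\end{itemize}
Any $f$ with $H^2(f,f_{\mu,G_n})\ge t$ therefore has likelihood ratio at most $e^{-nt/2}$ with high probability. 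I then union bound over a sup-norm (or bracketing) net of the class $\{f_{\mu',G}\}$, with $\mu'$ restricted to a range of size $\mathrm{poly}(n)\sigmax$. Observations beyond that range occur with small probability, and a crude preliminary argument, for instance through the sample median of the data, localizes $\widehat\mu$. Since the MLE has likelihood ratio at least $1$ against $f_{\mu,G_n}$ (which is feasible, because $\supp G_n\subseteq[\sigmin,\sigmax]$), this yields $H^2(f_{\widehat\mu,\widehat G},f_{\mu,G_n})\lesssim \epsilon_n^2$.

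The main obstacle is the metric entropy of normal scale mixtures. To get a polylog dependence on $\sigmax/\sigmin$, I will assume the paper's Chebyshev-approximation entropy bound, $\log N\lesssim \mathrm{polylog}(n,\sigmax/\sigmin)$. The idea behind it is to reparametrize by $\log\sigma$ and approximate the kernel by low-degree polynomials in a transformed variable, which lets moment matching control the covering. I also need to handle the small-density regions in the usual way, through a truncation at level $1/\mathrm{poly}(n)$.

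For the second step, a Hellinger bound on the location parameter alone is not immediate, since $\widehat G\ne G_n$. I would use the triangle inequality together with symmetry. Let $R$ denote reflection about $\mu$, so that $f_{\mu,G_n}$ is symmetric about $\mu$ and $f_{\widehat\mu,\widehat G}$ is symmetric about $\widehat\mu$. Then
\[
H(f_{2\widehat\mu-\mu,G_n},f_{\mu,G_n})=H(f_{\mu,G_n}\circ R_{\widehat\mu},f_{\mu,G_n})\le H(f_{\mu,G_n}\circ R_{\widehat\mu},f_{\widehat\mu,\widehat G})+H(f_{\widehat\mu,\widehat G},f_{\mu,G_n})=2H(f_{\widehat\mu,\widehat G},f_{\mu,G_n}),
\]
where the last equality holds because reflection about $\widehat\mu$ fixes $f_{\widehat\mu,\widehat G}$ and preserves Hellinger distance. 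Hence $H^2(f_{\mu+2(\widehat\mu-\mu),G_n},f_{\mu,G_n})\le4\epsilon_n^2$. By the definition of $\omega_{H^2,G_n}$ this gives $2|\widehat\mu-\mu|\le\omega_{H^2,G_n}(4\epsilon_n^2)$, and absorbing the constants into the polylog argument yields the stated bound.
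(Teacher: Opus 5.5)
Your two-step structure is the paper's. \Cref{thm:general} is obtained there by combining \Cref{lemma:density_estimation} with \Cref{lemma:symmetrization}, and your reflection-plus-triangle-inequality argument is that lemma in metric form, with the same factor $4$. The paper averages the integrand with its reflection and uses $a^2+b^2\ge\frac12(a-b)^2$ pointwise, which is the squared triangle inequality through the reflection-invariant $\sqrt{f_{\widehat\mu,\widehat G}}$.

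The genuine difference is how you would prove the density bound. The paper proves an i.n.i.d.\ version of van de Geer's basic inequality. It then controls the empirical process by bracketing-entropy chaining with peeling over \emph{local} Hellinger balls (\Cref{lemma:entropic-upper-bound}, \Cref{thm:metric-entropy}); inside such a ball the location parameter is automatically localized by the symmetrization lemma. You instead use a one-shot Chernoff bound for each fixed $f$ plus a union bound over a global sup-norm net, in the style of Zhang's GMLE analysis. This avoids chaining and brackets. The cost is that you must localize $\widehat\mu$ a priori, and you must turn sup-norm approximation of $f_{\widehat\mu,\widehat G}$ into control of the likelihood ratio (the $f_j+\eta$ plus tail truncation step). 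The rate is the same, since the entropy is polylogarithmic anyway.

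Two steps in your sketch of the density bound need repair.

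\textbf{The first identity is false.} You claim $\bE\prod_i\sqrt{f(X_i)/f_{\mu,G_n}(X_i)}=\prod_i\int\sqrt{f\,\varphi_{\sigma_i}(\cdot-\mu)}$. Write $p_i$ for the density of $X_i$ and $\bar f=f_{\mu,G_n}$. Your right-hand side equals $\bE\prod_i\sqrt{f(X_i)/p_i(X_i)}$, a likelihood ratio against the true product law. The MLE need not beat that law; only $\bar f^{\otimes n}$ is feasible. The correct computation is $\bE\sqrt{f(X_i)/\bar f(X_i)}=\int p_i\sqrt{f/\bar f}$, and AM--GM then gives directly
\[
\prod_{i=1}^n\int p_i\sqrt{f/\bar f}\le\Big(\int \bar f\sqrt{f/\bar f}\Big)^n=\Big(\int\sqrt{f\bar f}\Big)^n=\Big(1-\tfrac12 H^2(f,\bar f)\Big)^n .
\]
No Jensen step is needed, and your conclusion survives with adjusted constants.

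\textbf{The sample median does not localize the MLE.} Localize it from the likelihood instead. Take the event $\max_i|X_i-\mu|/\sigma_i\le\sqrt{2\log(2n/\delta)}$, which has probability at least $1-\delta$. On it, $\bar f(X_i)\ge \delta/(2\sqrt{2\pi}\,n^2\sigmax)$. On the other hand, $f_{\mu',G}(X_i)\le(\sqrt{2\pi}\sigmin)^{-1}e^{-(X_i-\mu')^2/(2\sigmax^2)}$ for every feasible $G$. Hence every $\mu'$ with $|\mu'-\mu|\ge C\sigmax\sqrt{\log(n\sigmax/(\delta\sigmin))}$ has strictly smaller likelihood than $f_{\mu,G_n}$, so it cannot be $\widehat\mu$.
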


We remark that \Cref{thm:general} establishes an upper bound that is competitive with the best oracle estimator possessing knowledge of $G_n$, uniformly over all possible choices of $G_n$. Indeed, even in the mean estimation problem where $X_1,\dots,X_n$ are i.i.d. drawn from $f_{\mu, G_n}$ with \emph{known} $G_n$, Le Cam's two-point method \citep{lecam1973convergence,lecam1986asymptotic} yields a minimax lower bound of $\omega_{H^2,G_n}(\frac{1}{n})$.\footnote{This precise argument does not hold in the compound setting, but this intuition is conjectured to remain valid. See \Cref{sec:conclusion} for detailed discussions.} %\textcolor{red}{See \citep{vaart1998asymptotic} for further discussion of the precise form of Le Cam's method in the compound setting.}}
By comparison, the radius in the Hellinger modulus of continuity in our upper bound is $\widetilde{O}(\frac{1}{n})$, which essentially matches this lower bound even when $G_n$ is unknown.

Specializing to several heterogeneous mean estimation models studied in the literature \citep{pensia2022estimating,devroye2023mean}, \Cref{thm:general} implies the following error bounds, all of which match the best known rates (\citep[Table 1]{pensia2022estimating} and \citep[Section 5.1]{devroye2023mean}) up to logarithmic factors. 
\begin{corollary}\label{cor:examples}
Let $L = \widetilde{O}(1)$ be the poly-logarithmic factor in \Cref{thm:general}. The following high-probability guarantee holds in specific examples: 
\vspace{-0.5em}
\begin{enumerate}
    \item Equal variance: when $\sigma_i \equiv 1$, then $|\widehat{\mu}^{\EB} - \mu| = O(\sqrt{\frac{L}{n}})$; 
    \vspace{-0.8em}
    \item Quadratic variance: when $\sigma_i = i$, then $|\widehat{\mu}^{\EB} - \mu| = O(L\log n)$; 
    \vspace{-0.5em}
    \item Two variances: when $\sigma_1=\dots=\sigma_m=1$ and $\sigma_{m+1}=\cdots=\sigma_n=\sigma\ge 1$ with $m\le \frac{n}{2}$, then $|\widehat{\mu}^{\EB} - \mu| = O(\sigma\sqrt{\frac{L}{n}})$. For specific ranges of $m$, sharper upper bounds can be obtained: 
    \begin{align*}
    |\widehat{\mu}^{\EB} - \mu| = \begin{cases}
        O(\frac{\sqrt{nL}}{m}) &\text{if } m \ge \sqrt{nL}, \\
        O(1) &\text{if } \sqrt{\frac{nL}{\sigma}} + \log n \le m < \sqrt{nL}. 
    \end{cases}
    \end{align*}
    \vspace{-1em}
    \item $\alpha$-mixture distribution: when $\sigma_1=\dots=\sigma_m = 1$ and $\sigma_{m+1}=\dots=\sigma_n=n^{\alpha}$ with $m=cL$, then $|\widehat{\mu}^{\EB} - \mu| = O(n^{\alpha-1/2}\sqrt{L})$ if $0<\alpha<1$ and $|\widehat{\mu}^{\EB} - \mu| = O(1)$ if $\alpha\ge 1$. 
\end{enumerate}
\end{corollary}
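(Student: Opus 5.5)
The plan is to apply \Cref{thm:general} in each case. That reduces every item to a lower bound on $H^2(f_{\mu_1,G_n}, f_{\mu_2,G_n})$ in terms of $\Delta=|\mu_1-\mu_2|$. By translation invariance, $H^2(f_{\mu_1,G_n},f_{\mu_2,G_n})=H^2(f_{0,G_n},f_{\Delta,G_n})$. If we show $H^2\ge \psi(\Delta)$ for an increasing $\psi$, then $\omega_{H^2,G_n}(L/n)\le \psi^{-1}(L/n)$, and the stated rates follow from inverting $\psi$.

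The main tool will be joint convexity of $H^2$ applied to the mixture over $\sigma\sim G_n$, with the same mixing weights on both sides. Used naively this gives an upper bound, which is the wrong direction. So lower bounds need a different device. I will use the elementary inequality $H^2(p,q)\ge \frac{1}{8}\TV(p,q)^2$, and bound $\TV$ from below by testing against a specific set $A$: $\TV\ge |P_0(A)-P_\Delta(A)|$.

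\begin{enumerate}
\item Equal variance: $G_n=\delta_1$, so $H^2 = 1-e^{-\Delta^2/8}\gtrsim \min(\Delta^2,1)$. This gives $\omega(L/n)\lesssim\sqrt{L/n}$.
\item Quadratic variance ($\sigma_i=i$): I take $A=(\Delta/2,\infty)$. The probability difference is $\frac1n\sum_i (2\Phi(\frac{\Delta}{2i})-1)$. For $\Delta\ge 1$ this is $\gtrsim \frac1n\sum_{i\le\Delta}1+\frac1n\sum_{i>\Delta}\frac{\Delta}{i}\gtrsim \frac{\Delta\log(n/\Delta)}{n}$. Thus $H^2\gtrsim \Delta^2\log^2(n/\Delta)/n^2$. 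Requiring this to be at most $L/n$ yields $\Delta\lesssim \sqrt{nL}$, which is too weak. I therefore expect the needed $O(L\log n)$ to come instead from a Hellinger computation that exploits the small-$\sigma$ components through disjoint supports. The component $\sigma_i$ with $i\ll\Delta$ is nearly singular between the two locations. Lower-bounding $H^2$ by the sum of contributions over disjoint regions around each location (chi/Hellinger affinity $\int\sqrt{f_0f_\Delta}$ bounded by the mass of the small-$\sigma$ components that fail to overlap) should give $H^2\gtrsim G_n(\sigma\lesssim\Delta/\sqrt{\log n})\approx \Delta/(n\sqrt{\log n})$. That yields $\Delta\lesssim L\sqrt{\log n}\le L\log n$.
\item Two variances: the general principle is that $1-\int\sqrt{f_0f_\Delta}$ is at least the contribution from the $\sigma=1$ component when that component separates. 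I handle three regimes.
\begin{itemize}
\item Bound $O(\sigma\sqrt{L/n})$: use the Fisher-information/Taylor regime. $H^2\gtrsim \Delta^2 I(G_n)$ for small $\Delta$, with $I\ge \sigma^{-2}$. Here I need a local quadratic lower bound valid up to $\Delta\lesssim\sigma$, which I get via the TV test with $A$ a half-line.
\item Regime $m\ge\sqrt{nL}$: for $\Delta\le 1$ the half-line test gives $\TV\gtrsim \frac{m}{n}\Delta$. Hence $H^2\gtrsim m^2\Delta^2/n^2$, and so $\Delta\lesssim \sqrt{nL}/m$.
\item Regime giving $O(1)$: for $\Delta\ge C$, the $\sigma=1$ mass near $0$ sits where $f_\Delta$ is dominated by $\frac{n-m}{n}\sigma^{-1}\varphi$. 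The affinity loss is then $\gtrsim \frac mn\min(1,\sqrt{m\sigma/n})$-type. Setting this below $L/n$ fails when $m\ge\sqrt{nL/\sigma}+\log n$.
\end{itemize}
\item $\alpha$-mixture: this is the same computation with $\sigma=n^\alpha$ and $m=cL$. The first bound of item 3 gives $n^{\alpha-1/2}\sqrt L$. The $O(1)$ bound for $\alpha\ge1$ comes from the third regime of item 3, since $\sqrt{nL/\sigma}\le\sqrt L\lesssim m$.
\end{enumerate}

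The main obstacle is the two-regime Hellinger lower bounds in items 2–3. Naive TV testing loses a square, so I must compute the Hellinger affinity directly. I plan to do this by splitting the real line into the window $|x|\le 2\sqrt{\log n}$ around $0$ and its complement. On the window, I bound $\sqrt{f_0f_\Delta}\le \sqrt{f_0}\sqrt{\text{wide part}}$ and apply Cauchy–Schwarz.
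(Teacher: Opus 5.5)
There is a genuine gap, and it is in item 2. Your handling of item 1, of the $O(\sigma\sqrt{L/n})$ and $O(\sqrt{nL}/m)$ bounds in item 3, and of item 4 is sound. It essentially matches the paper, which uses \eqref{eq:Hellinger-variational} with interval indicators and the denominator bounded by a constant; that is your $\TV^2$ half-line test.

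In item 2, the inequality you expect, $H^2(f_{0,G_n},f_{\Delta,G_n})\gtrsim G_n(\sigma\lesssim\Delta/\sqrt{\log n})\approx\Delta/(n\sqrt{\log n})$, is false: the small-$\sigma$ components do not separate from the other density. On $|x|\lesssim\Delta$, the components $\sigma_i=i\in[2\Delta,n]$ give both $f_{0,G_n}$ and $f_{\Delta,G_n}$ a common background of density $\asymp\log(n/\Delta)/n$. All the narrow components ($i\lesssim\Delta$) together have density only $O((1+\log(\Delta/|x|))/n)$ there. So the narrow mass $\approx\Delta/n$ enters $H^2$ only through $\int(\text{narrow})^2/(\text{background})$. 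In fact $H^2(f_{0,G_n},f_{\Delta,G_n})\asymp\Delta/(n\log(n/\Delta))$ for $1\ll\Delta\ll n/\mathrm{polylog}(n)$. Hence $\omega_{H^2,G_n}(L/n)$ is itself of order $L\log n$: your intermediate bound $\Delta\lesssim L\sqrt{\log n}$ is not true, and the $\log n$ in item 2 is genuine. The fixed window $|x|\le 2\sqrt{\log n}$ from your last paragraph is also at the wrong scale for this item. It only yields a lower bound of order $\log^2\Delta/(n\sqrt{\log n})$, which gives no polylogarithmic bound on $\Delta$.

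The missing tool is what your Cauchy--Schwarz-on-a-window idea gives once written out: $H^2(P,Q)\ge(\sqrt{P(A)}-\sqrt{Q(A)})^2\ge\frac{(P(A)-Q(A))^2}{2(P(A)+Q(A))}$. This is \eqref{eq:Hellinger-variational} with $T=\mathbf{1}_A$. Unlike $\TV^2$ it keeps the small denominator, but the window must match the scale of the problem.

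For item 2, the paper takes an interval of length $\Delta$ adjacent to one center: its test $\mathbf{1}_{[\mu,2\mu]}$, where $\mu$ plays the role of your $\Delta$.
\begin{itemize}
\item Every component with $\sigma\le\Delta$ contributes a constant-size difference and the others a nonnegative one, so the numerator is $\gtrsim G_n([0,\Delta])^2\asymp(\Delta/n)^2$.
\item The denominator is $\lesssim G_n([0,\Delta])+\Delta\,\bE_{G_n}[\sigma^{-1}]\lesssim\Delta\log n/n$.
\item Together these give $H^2\gtrsim\Delta/(n\log n)$, hence $\Delta\lesssim L\log n$.
\end{itemize}

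The same issue affects the $O(1)$ regime of item 3, to a lesser degree. There the window must have length $O(1)$ (the paper's $\mathbf{1}_{[\mu,\mu+1]}$), which gives $H^2\gtrsim\frac{(m/n)^2}{m/n+1/\sigma}\asymp\frac{m}{n}\min(1,\frac{m\sigma}{n})$. Your $\frac{m}{n}\min(1,\sqrt{m\sigma/n})$ overstates this, and is false, when $m\sigma<n$. The threshold $m\gtrsim\sqrt{nL/\sigma}$ that you state actually comes from the correct expression, not yours. A window of length $\sqrt{\log n}$ would only give $\Delta=O(\sqrt{\log n})$ and would raise that threshold by a factor $(\log n)^{1/4}$.
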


Finally, for the subset-of-signals problem with $m$ signals, the estimator $\widehat{\mu}^{\EB}$ achieves near-optimal error bounds, including the correct phase transition, across all ranges of $m$: 
\begin{theorem}\label{thm:main}
Let $\sigma_i\in [\sigmin, \sigmax]$ for all $i\in [n]$, and the number of signals be $m$ in the subset-of-signals problem. With probability at least $1-\delta$, estimator $\widehat{\mu}^{\EB}$ in \eqref{eq:EB_MLE} achieves  
\begin{align*}
|\widehat{\mu}^{\EB} - \mu| = \begin{cases}
    \widetilde{O}\bpth{\bpth{\frac{n}{m^4}}^{1/2}} &\text{if }\widetilde{O}(1)\ll m\le n^{1/4}, \\
    \widetilde{O}\bpth{\bpth{\frac{n}{m^4}}^{1/6}} & \text{if } n^{1/4}\le m\le n, 
\end{cases}
\end{align*}
where $\widetilde{O}(\cdot)$ hides polylogarithmic factors in $(n, \frac{\sigmax}{\sigmin}, \frac{1}{\delta})$. 
\end{theorem}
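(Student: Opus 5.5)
Since \Cref{thm:general} is already available, the plan is to reduce \Cref{thm:main} to a lower bound on the Hellinger distance $H^2(f_{\mu_1,G_n},f_{\mu_2,G_n})$, valid uniformly over all $G_n$ with $G_n([\sigmin,1])\ge m/n$. Concretely, I will show that for $\Delta=|\mu_1-\mu_2|$,
\begin{align*}
H^2(f_{\mu_1,G_n},f_{\mu_2,G_n}) \gtrsim \min\sth{1,\ \frac{m^2\Delta^2}{n},\ \frac{m^4\Delta^6}{n^2}\cdot c(\Delta)}
\end{align*}
up to logarithmic factors. The exact form is to be tuned so that inverting at level $t=L/n$ gives $\Delta\lesssim \sqrt{nL}/m^2$ in the small-$m$ regime and $\Delta\lesssim (nL/m^4)^{1/6}$ in the large-$m$ regime. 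Plugging this into $\omega_{H^2,G_n}(L/n)$ finishes the proof. The condition $m\gg\widetilde O(1)$ is needed so that the $\min\{1,\cdot\}$ branch is active, i.e.\ so that $m/n\gg L/n$ lets the signal component alone force $H^2\gtrsim$ const once $\Delta\gtrsim1$.

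For the Hellinger lower bound, decompose $G_n=\frac{m}{n}G_s+(1-\frac mn)G_\ell$, where $G_s$ is supported on $[\sigmin,1]$. The adversary's best strategy is to make the noisy part $G_\ell$ hide the shift. I will lower bound $H^2\ge \frac{1}{4}\int (f_1-f_2)^2/(f_1+f_2)$, restricted to a window $|x-\mu|\lesssim 1$ around the location. On this window the signal part contributes a difference of order $\frac mn\min(\Delta,1)$ in density, since $\sigma\le1$. The noise part $f_{\mu,G_\ell}$ is a scale mixture, hence smooth and symmetric, so its shift difference is an odd function. Its density is at most $\bE_{G_\ell}[1/\sigma]$.

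There are two competing cases. If the large components have $\sigma\gg1$, their contribution to the difference is small: by a Taylor expansion it is about $\Delta/\sigma^2$ times the density. However, the denominator $f_1+f_2$ can be as large as $1/\sigma$ over a window of width $\sigma$. Balancing the denominator mass against the signal gives $H^2\gtrsim (m/n)^2\Delta^2/\bar f$. Integrating over the window where the signal part dominates yields the $m^2\Delta^2/n$-type term, using the fact that the total noise mass that can be concentrated near $\mu$ is limited because $G_\ell$ is a probability measure. When instead the noise has $\sigma\sim\Delta$, cancellations between the signal difference and the noise difference are possible. To rule them out I will use a test function argument: Hellinger is at least the squared normalized difference of expectations, $(\bE_1 h-\bE_2 h)^2/(\mathrm{Var}_1h+\mathrm{Var}_2h)$. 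I will choose $h$ to be an odd bump at scale $\sim\Delta$ whose response to Gaussians of large $\sigma$ is suppressed to third order. This gives the $\Delta^6$ exponent and is what produces the $1/6$ power.

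The main obstacle is this uniform lower bound over the adversarial $G_\ell$, and in particular identifying the right test function so that scale mixtures cannot cancel the signal's shift beyond order $\Delta^3$. My first attempt will be $h$ equal to a combination of Hermite-type odd functions, $x\varphi(x/\Delta)$ and its derivative, with the coefficients chosen to kill the leading moments of wide Gaussians. I will also check the phase transition at $m=n^{1/4}$, where $\sqrt n/m^2=(n/m^4)^{1/6}=1$ and the two regimes meet consistently.
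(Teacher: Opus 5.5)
Your top-level reduction is the paper's: \Cref{thm:main} is \Cref{thm:general} plus a bound on $\omega_{H^2,G_n}(t)$ at $t=\widetilde O(1/n)$ under $G_n([0,1])\ge p:=m/n$, i.e.\ \Cref{lemma:functional_inequality}. All of the content is in that modulus bound, and there your plan has a genuine gap.

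First, the inequality you aim for has the wrong shape. $H^2(f_{\mu_1,G},f_{\mu_2,G})$ is a one-sample quantity, so it can depend on $(m,n)$ only through $p$. Its cap is $p$, not $1$. By joint convexity, a single very wide noise component gives $H^2\le 2p+o(1)$ for every shift. This is why the modulus is infinite for $t\ge 3p$, and why $m\gg\widetilde O(1)$ is needed: it ensures $p\ge Ct$, not $H^2\gtrsim 1$. Moreover, inverting your last two terms at $t=L/n$ gives $\Delta\lesssim\max\{\sqrt{L}/m,(nL/m^4)^{1/6}\}$. For $m\ll n^{1/4}$ this would beat the minimax rate $(n/m^4)^{1/2}$. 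The correct shape is $H^2\gtrsim p^{4/3}\min\{\Delta^2,\Delta^{2/3}\}$ (as long as this is $\lesssim p$). It is attained by $G=p\delta_1+(1-p)\delta_s$ with $s\asymp(\max\{1,\Delta\}/p)^{2/3}$.

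Second, and more importantly, the obstacle is misdiagnosed and the key idea is missing. In the paper's notation ($P=f_{\mu,G}$, $Q=f_{0,G}$, shift $\mu$, window width $\Delta$), both densities use the same $G$, so every component is shifted by the same amount. For monotone tests, or the windows $\mathbf{1}_{[\mu,\mu+\Delta]}$ in \eqref{eq:lb}, each component contributes to $\bE_P[T]-\bE_Q[T]$ with the same sign. So the noise cannot cancel the signal, and engineering $h$ to suppress the wide Gaussians' response only discards information.

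The adversary's only lever is the denominator. Wide components put density $\approx\bE_G[\indc{\sigma\ge1}/\sigma]$ near the center, which inflates the second moment of any localized test, whatever its shape. If you control this only through normalization of $G_\ell$, as you propose, you get at best $\mu\lesssim\sqrt{t}/p$ for $\mu<1$ (the median-type rate $\sqrt{nL}/m$). For $\mu\ge1$ you get nothing unless $m\gtrsim\sqrt{nL}$.

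The paper instead turns the Hellinger constraint against the wide components themselves. The half-line test $\mathbf{1}_{[\mu,\infty)}$ has second moment at most $1$. Yet each component with $\sigma\ge\mu$ shifts its expectation by $\Phi(\mu/\sigma)-\Phi(0)\gtrsim\mu/\sigma$, so $\bE_G[\indc{\sigma\ge\mu}/\sigma]\lesssim\sqrt{t}/\mu$. Feeding this into the denominator of the unit window $\mathbf{1}_{[\mu,\mu+1]}$ gives $\mu^2G([0,1])^2\lesssim t\,(G([0,1])+\sqrt{t}/\mu)$ for $\mu<1$, hence $\mu\lesssim t^{1/2}/p^{2/3}$. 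The analogous inequality for $\mu\ge1$ gives $\mu\lesssim t^{3/2}/p^2$ once $p\ge Ct$. The exponents come from the $t^{3/2}$ produced by this interplay of two tests, not from a third-order cancellation. With $p=m/n$ and $t=\widetilde O(1/n)$, they are exactly the two rates in \Cref{thm:main}.
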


Compared with the known minimax risk \eqref{eq:minimax_risk} for the subset-of-signals problem, \Cref{thm:main} shows that the estimator $\widehat{\mu}^{\EB}$ achieves the minimax rate up to logarithmic factors under the mild assumption $\log\frac{\sigmax}{\sigmin} = O(\mathrm{polylog}(n))$. However, like many empirical Bayes approaches, a particularly appealing advantage of our estimator is that it is \emph{adaptive} and \emph{parameter-free}: this estimator requires no tuning parameters (we set $\sigma_{\min}= 0$ and $\sigma_{\max}=\infty$ in our experiments; see \Cref{sec:numerics}), and adapts to the signal size $m$. These features make $\widehat{\mu}^{\EB}$ especially attractive in practice. In the practical scenario where the joint MLE \eqref{eq:EB_MLE} is computed only approximately, a similar guarantee remains valid for approximate MLEs; see \Cref{remark:approximate_MLE}.

Compared with existing empirical Bayes approaches, while the idea of applying an NPMLE-based estimator is classical in the empirical Bayes literature \citep{kiefer1956consistency,robbins1956empirical}, our objective in \eqref{eq:EB_MLE} is conceptually different. In existing theoretical studies of empirical Bayes, one typically considers independent observations $X_i \sim P_{\theta_i}$ (the compound setting) and aims to estimate the parameter vector $(\theta_1,\dots,\theta_n)$. The empirical Bayes approach proceeds by first estimating the empirical distribution $G_n := \frac{1}{n}\sum_{i=1}^n \delta_{\theta_i}$ (e.g., via the NPMLE), and then applying the resulting learned prior to estimate $\theta$ through the corresponding Bayes rule. By contrast, our procedure likewise replaces heterogeneity in the \emph{nuisance parameter} $(\sigma_1,\dots,\sigma_n)$ with a mixing distribution $G_n$ and learns it from the data, but does not invoke explicit posterior inference. It is therefore an interesting theoretical observation that empirical Bayes methods are near-optimal in our setting, even though the procedure does not actually invoke a ``Bayes'' component. 

We also provide some discussions on the hyperparameters $(\sigmin, \sigmax)$ and computation. We acknowledge that our estimator requires additional knowledge of $(\sigmin, \sigmax)$, which is not needed by previous estimators. This requirement is, however, intrinsic to our approach: without it, the solution to \eqref{eq:EB_MLE} would be degenerate, since one could assign a positive mass to $\widehat{G}(\{0\})$ and choose $\widehat{\mu}\in \sth{X_1,\dots,X_n}$, thereby making the likelihood unbounded. Fortunately, the error dependence on $\sigma_{\min}$ in \Cref{thm:main} is only $\mathrm{polylog}(\frac{ \sigma_{\max} }{\sigma_{\min}})$ and thus very mild, and our numerical experiments show that even setting $\sigmin=0$ and $\sigmax=\infty$ still yields sound estimation performance. As for computation, solving the optimization program in \eqref{eq:EB_MLE} is, unfortunately, a nonconvex problem involving the infinite-dimensional parameter $\widehat{G}$. In \Cref{sec:numerics}, we adopt a successive maximization strategy. For fixed $\mu$, \eqref{eq:EB_MLE} reduces to a convex optimization problem in $G$, for which it is standard to apply a fully corrective Frank--Wolfe algorithm. For fixed $G$, \eqref{eq:EB_MLE} becomes a one-dimensional maximization problem in $\mu$, which can be efficiently solved via grid search. Although both Frank--Wolfe and successive maximization may converge to local maxima, our numerical results indicate that the resulting $\widehat{\mu}^{\EB}$ is empirically close to the ideal MLE obtained when $G_n$ is known exactly.

\subsection{Outline of the proof}
\Cref{thm:general} follows from a key result concerning the density estimation performance of the MLE, which accurately estimates the true mixing density $f_{\mu, G_n}$ in Hellinger distance. 

% \paragraph{Error of Density Estimation}

\begin{theorem}\label{lemma:density_estimation}
Let $\sigmin\le \sigma_i\le \sigmax$ for all $i\in [n]$, and $(\widehat{\mu},\widehat{G})$ be the joint MLE in \eqref{eq:EB_MLE}. Then with probability at least $1-\delta$, 
\begin{align*}
H^2(f_{\mu,G_n}, f_{\widehat{\mu}, \widehat{G}}) \le \frac{C}{n}\pth{\log^3\bpth{\frac{n\sigmax}{\sigmin}}\log^4\log\bpth{\frac{n\sigmax}{\sigmin}} + \log\frac{1}{\delta}}. 
\end{align*}
\end{theorem}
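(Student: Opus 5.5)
We follow the standard route for Hellinger accuracy of (NP)MLEs in the compound setting, in the spirit of Zhang (2009) and Jiang--Zhang, adapted to the joint maximization over the location $\mu$ and the scale mixing distribution $G$. The plan has three steps: an approximation step, a covering step, and a union bound with a Chernoff-type tail estimate.

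\textbf{Step 1: reduction to a discrete net.} The observations are independent with $X_i\sim \frac{1}{\sigma_i}\varphi(\frac{x-\mu}{\sigma_i})$. Their average density is exactly $f_{\mu,G_n}$, so for any fixed density $g$ the likelihood ratio identity
\begin{align*}
\bE\prod_{i=1}^n \sqrt{g(X_i)/f_{\mu,G_n}(X_i)} \le \exp\pth{-n H^2(f_{\mu,G_n},g)}
\end{align*}
still holds. To see this, write each factor's expectation as $1-H^2(p_i,g)$ (with $H^2$ normalized so that $\int\sqrt{pg}=1-H^2$), use $1-x\le e^{-x}$, and apply convexity of $p\mapsto H^2(p,g)$ to get $\frac1n\sum_i H^2(p_i,g)\ge H^2(f_{\mu,G_n},g)$.

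Since $\widehat{\mu}$ is unrestricted, we first localize it. With high probability every $X_i$ lies in $[\mu-\sigmax\sqrt{2\log(n/\delta)},\,\mu+\sigmax\sqrt{2\log(n/\delta)}]$. Moreover, an MLE with $\widehat\mu$ far from all data has lower likelihood than the candidate $(\widehat\mu=X_1,\widehat G=\delta_{\sigmin})$ whenever $|\widehat\mu-X_i|\gtrsim \sigmax\sqrt{\log(n\sigmax/\sigmin)}$. We may therefore restrict $\mu$ to an interval of length $O(\sigmax\sqrt{\log(n\sigmax/\sigmin)})$.

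On this restricted class $\calF=\{f_{\mu',G}\}$ we use the sup-norm (or weighted $\ell_\infty$) bracketing/covering $\eta$-net with $\eta=1/n^2$. Comparing $\widehat f$ to a net element $g$ with $g\ge \widehat f-\eta$ and working with $g+\eta$ (truncated to a bounded domain) gives
\[
\prod_i \frac{g(X_i)+\eta}{f_{\mu,G_n}(X_i)}\ge \prod_i\frac{\widehat f(X_i)}{f_{\mu,G_n}(X_i)}\ge 1 .
\]
A union bound over the net, together with the exponential moment bound above and Markov's inequality, then yields
\[
H^2(f_{\mu,G_n},\widehat f)\lesssim \frac{\log N(\eta)+\log(1/\delta)}{n}+\text{(tail and discretization terms)}.
\]
The tail terms are handled as in Zhang's argument: the lower bound $f_{\mu,G_n}(x)\ge \frac{1}{\sigmax}\varphi(\cdot)$ controls the region where the data lie, and the contribution of the $\eta$-perturbation is at most $O(\eta\cdot\text{length})$, which is negligible.

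\textbf{Step 2 (main obstacle): metric entropy.} It remains to show
\[
\log N(\eta,\calF,\|\cdot\|_\infty)\lesssim \log^3\frac{n\sigmax}{\sigmin}\,\log^4\log\frac{n\sigmax}{\sigmin}.
\]
This polylogarithmic dependence on the scale ratio is the crux. Rather than cutting $[\sigmin,\sigmax]$ into pieces and moment-matching on each, which costs polynomially many pieces, we work in the variable $s=1/\sigma^2$ and use the transformed polynomial basis. For $|x-\mu'|\le R$, the map $s\mapsto\sqrt{s}\,e^{-s(x-\mu')^2/2}$ is analytic. We split $[\sigmin,\sigmax]$ into $O(\log\frac{\sigmax}{\sigmin})$ dyadic scale blocks, or better, use $t=\log\sigma$. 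On each block we approximate the kernel uniformly in $x$ by Chebyshev polynomials of degree $k\approx\log n\cdot\mathrm{poly}\log\log$ in a suitable variable. Any $G$ can then be replaced by a discrete $G'$ matching these generalized moments on each block, with at most $O(k)$ atoms per block, so $O(k\log\frac{\sigmax}{\sigmin})$ atoms in total and sup-norm error at most $\eta$.

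The difficulty is that the kernel's analyticity region in $s$ depends on $x$: for large $|x|$ the kernel is negligible, while for small $|x|$ it is smooth. The Bernstein-ellipse bound must be uniform, and the accounting must come out to $\log^2$ times the number of blocks. I would first try a single Chebyshev expansion in the variable $u=\log(1/\sigma)$ for $e^{-e^{2u}y^2/2+u}$ with $y$ in a bounded range. Failing that, I would use a blockwise argument in which the blocks with $\sigma\ll|x|$ are discarded. Finally, we discretize the atoms' locations, their weights, and $\mu'$ on grids of polynomial size. This gives
\[
\log N\lesssim (\#\text{atoms})\cdot\log(n\sigmax/\sigmin),
\]
which is $\log^3$ with $\log\log$ factors, as claimed.

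\textbf{Step 3.} Plugging the entropy bound from Step 2 into the inequality from Step 1 yields the stated bound with probability at least $1-\delta$.
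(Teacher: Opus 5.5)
Your route differs from the paper's in both halves. In place of the Wong--Shen/van de Geer local bracketing bound of \Cref{lemma:entropic-upper-bound} you use a Zhang-type union bound over a sup-norm net, and in place of one global separable expansion you use scale-local (dyadic) moment matching. The route can be completed, but as written it has gaps, and the main one is at the crux.

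\textbf{Step 1.} The exponential-moment inequality is true, but your derivation is not. Write $\bar f=f_{\mu,G_n}$. Then $\bE\sqrt{g(X_i)/\bar f(X_i)}=\int p_i\sqrt{g/\bar f}$, which is not $\int\sqrt{p_i g}$, so neither the per-factor identity nor the convexity step applies. What works is independence plus AM--GM: $\prod_i\int p_i\sqrt{g/\bar f}\le\bigl(\frac1n\sum_i\int p_i\sqrt{g/\bar f}\bigr)^n=\bigl(\int\sqrt{\bar f g}\bigr)^n$.

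Your localization comparison also fails. The likelihood of $(X_1,\delta_{\sigmin})$ is a product of factors $\sigmin^{-1}\varphi((X_i-X_1)/\sigmin)$, which can be as small as $\exp(-c\,n\sigmax^2/\sigmin^2)$. So it need not beat a distant $\widehat\mu$. Compare instead with $\bar f$ itself, which is feasible. On the event $|X_i-\mu|\le\sigma_i\sqrt{2\log(2n/\delta)}$ for all $i$, one has $\bar f(X_i)\ge\frac{1}{n\sigma_i}\varphi(\frac{X_i-\mu}{\sigma_i})\gtrsim\frac{\delta}{n^2\sigmax}$. If instead $\widehat\mu$ is at distance $D\ge\sigmax$ from every $X_i$, each factor is at most $\sigmax^{-1}\varphi(D/\sigmax)$. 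Together these give $|\widehat\mu-\mu|\lesssim\sigmax\sqrt{\log(n/\delta)}$.

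Finally, the $\eta$-perturbation costs a factor $e^{O(nM\eta)}$, where $M$ is the window radius. So you need $\eta\lesssim 1/(nM)$, or else normalize $\sigmax=1$; the choice $\eta=n^{-2}$ is not scale-free.

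\textbf{Step 2.} This step is left as a list of options, and the obstacle you worry about is not actually there. In $s=\sigma^{-2}$ the kernel $\sqrt{s}\,e^{-sx^2/2}$ is analytic on $\{\operatorname{Re}s>0\}$ for every $x$. For a block $[a,b]$ with $b\le 4a$, the Bernstein ellipse with $\rho=2$ lies in $\{\operatorname{Re}s\ge\frac{5a}{8},\ |s|\le\frac{35a}{8}\}$. There $|e^{-sx^2/2}|=e^{-\operatorname{Re}(s)x^2/2}\le 1$, so the kernel is bounded by $2.1\sqrt a\le 2.1/\sigmin$ on the ellipse, uniformly in $x\in\mathbb{R}$.

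\Cref{lem:bernstein} then gives a polynomial in $s$ of degree $k=O(\log\frac{1}{\eta\sigmin})$ that is $\eta$-accurate on every block and for every $x$. No blocks need to be discarded, and $x$ need not be restricted. The remaining steps are:
\begin{itemize}
\item Carath\'eodory on each of the $O(1+\log\frac{\sigmax}{\sigmin})$ blocks gives $O(k\log\frac{\sigmax}{\sigmin})$ atoms in total.
\item Place the atoms on a grid of mesh $\asymp\eta\sigmin$ in $\log\sigma$; this suffices because $|\partial_{\log\sigma}\sigma^{-1}\varphi(x/\sigma)|\le 1/\sigmin$.
\item Place the weights on an $\ell_1$-net of precision $\eta\sigmin$, and $\mu'$ on a grid of mesh $\asymp\eta\sigmin^2$.
\end{itemize}
This yields $\log N=O(\log^2\frac{n\sigmax}{\sigmin}\cdot\log\frac{\sigmax}{\sigmin})$ plus lower-order terms. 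That closes the argument, and even drops the $\log\log$ factors.

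Your other option, one Chebyshev expansion in $\log\sigma$ for each fixed $x$, is essentially \Cref{lem:chebyshev_approximation}. It works for all $y$, not only a bounded range, because that bound is uniform in $K$.

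\textbf{Comparison.} The paper's bracketing argument localizes $\mu$ through the Hellinger ball and \Cref{lemma:symmetrization}, which gives $|\mu|\le C\sigmax$ on $\calP(\overline{P},\delta)$, with no data-driven truncation. It also handles approximate MLEs at cost $\frac1n\log\frac1\beta$. Your union bound is more elementary and loses nothing here: the entropy is polylogarithmic in the radius, so chaining brings no gain.

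On the entropy side, the paper's single expansion in $u=\log t+\log x$ needs truncation to $[x_{\min},x_{\max}]$, uses $L^2$ separable terms, and has degree $L\asymp\lambda\log(\lambda/\varepsilon)$ from \Cref{lem:chebyshev_approximation}. With the parameters of \Cref{lemma:step-I}, both $\lambda$ and the logarithm of the required precision are of order $\log\frac{1}{\varepsilon\sigmin}$. So this $L$ is already quadratic in the logarithm, and the count $L^2\log\frac{AGL^2}{\varepsilon}$ is of higher order than $\log^3$. Your blockwise count is the more economical way to reach a cubic power.
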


\begin{remark}\label{remark:approximate_MLE}
If $(\widehat{\mu},\widehat{G})$ is an approximate MLE with
% \begin{align*}
$\prod_{i=1}^n f_{\widehat{\mu},\widehat{G}}(X_i) \ge \beta\cdot \max_{\mu, G }\prod_{i=1}^n f_{\mu,G}(X_i)$
% \end{align*}
with $\beta\in (0,1]$, by \Cref{lemma:entropic-upper-bound}, the Hellinger bound in \Cref{lemma:density_estimation} has an additive factor $O(\frac{1}{n}\log\frac{1}{\beta})$. 
\end{remark}

\Cref{lemma:density_estimation} establishes an upper bound in Hellinger distance for estimating the density $f_{\mu,G_n}$, which is the true average distribution of $X_1,\dots,X_n$. 
Obtaining density estimation guarantees under the Hellinger metric is standard in the statistical literature for analyzing the MLE, thanks to its well-known connection with the metric entropy of the underlying density class \citep{wong1995probability,geer2000empirical}. \Cref{thm:general} is then a direct consequence of \Cref{lemma:density_estimation} and a symmetrization inequality for the Hellinger distance in \Cref{lemma:symmetrization}, presented in \Cref{sec:symmetrization}.

\paragraph{Upper bound on the covering number}
In the proof of \Cref{lemma:density_estimation}, a key technical step is to analyze the metric entropy of the family of normal scale mixtures $\sth{f_{0, G}: \mathrm{supp}(G)\subseteq [\sigmin,\sigmax]}$ under the Hellinger metric.
Although such normal scale mixtures have been studied in the literature \citep{ghosal2001entropies,ghosal2007posterior,ignatiadis2025empirical}, the existing arguments based on (local) moment matching \citep{ghosal2001entropies,zhang2009generalized} yield metric entropy bounds with a dependence of $\mathrm{poly}(\frac{\sigmax}{\sigmin})$; by contrast, we critically need to improve this dependence to $\mathrm{polylog}(\frac{\sigmax}{\sigmin})$. 
To achieve this, we introduce two simple yet effective ideas. 
\vspace{-0.5em}
\begin{itemize}
    \item First, instead of applying classical moment matching based on polynomials, we perform a generalized moment matching by constructing a suitable basis $\{a_k(\sigma), g_k(x)\}$ to approximate the normal density $\frac{1}{\sigma}\varphi(\frac{x}{\sigma})$ by a separable expansion $\sum_{k=1}^L a_k(\sigma)g_k(x)$.
    \vspace{-0.5em}
    \item Second, instead of using the usual Taylor approximating polynomial for a given analytic function, we employ Chebyshev polynomials, which more accurately capture growth behavior on a Bernstein ellipse rather than on a disk in the complex plane.
\end{itemize}

\vspace{-0.5em}
These arguments establish the $\mathrm{polylog}(\frac{\sigmax}{\sigmin})$ dependence, which we elaborate in \Cref{sec:density_est}.
% and we defer the details to \Cref{sec:density_est}. 

% \begin{proof}[Proof of \cref{thm:general}]
% Let $(\widehat{\mu}, \widehat{G})$ be the joint MLE in \eqref{eq:EB_MLE}.
%     From \Cref{lemma:symmetrization}, we have $   $ 
%     % From \Cref{lemma:density_estimation} we have with probability at least $1-\delta$, that $ (\hat{mu} , \hat{G}) $ satisfies $$    
% \end{proof}

\paragraph{Upper bounding Hellinger modulus of continuity}

To prove \Cref{thm:main}, we need to upper bound the Hellinger modulus of continuity $\omega_{H^2,G}(t)$.  
% perhaps under additional structural assumptions on $G$.
% when $G([0,1])\ge p$. 
We prove such a bound in the case when $G([0,1])\ge p$ for some $p>0$, which captures instances such as the subset-of-signals problem.

\begin{lemma}\label{lemma:functional_inequality}
Let $t\in [0,1]$, and $G$ be a prior distribution over $[0,\infty)$ with $G([0,1])\ge p > 0$. Then for a universal constant $C>0$, 
\begin{align}
\omega_{H^2,G}(t) \le C\begin{cases}
    (\frac{t^3}{p^4})^{1/6} &\text{if } t \le p^{4/3}, \\
    (\frac{t^3}{p^4})^{1/2} &\text{if } p^{4/3} < t \le \frac{p}{C}. 
\end{cases}
\end{align}
\end{lemma}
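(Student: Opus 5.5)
The plan is to prove the contrapositive: a lower bound on $H^2(f_{0,G},f_{\Delta,G})$ as a function of the shift $\Delta>0$. By translation invariance we may take $\mu_1=0$ and $\mu_2=\Delta$. Inverting the two regimes of the statement, it suffices to show
\begin{align*}
H^2(f_{0,G},f_{\Delta,G}) \ge c\, p^{4/3}\min\{\Delta^2,\Delta^{2/3}\}
\end{align*}
for all $\Delta$ up to the range where the right-hand side reaches $p/C$. Indeed, $t=\Delta^2p^{4/3}$ gives $\Delta=(t^3/p^4)^{1/6}$, and $t=\Delta^{2/3}p^{4/3}$ gives $\Delta=(t^3/p^4)^{1/2}$; the regimes switch at $\Delta=1$, i.e.\ at $t=p^{4/3}$. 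The basic tool is the pointwise inequality
\begin{align*}
(\sqrt f-\sqrt g)^2=\frac{(f-g)^2}{(\sqrt f+\sqrt g)^2}\ge \frac{(f-g)^2}{2(f+g)}.
\end{align*}
It lets me lower bound $H^2$ by $\int_A (f_{0,G}-f_{\Delta,G})^2/(2(f_{0,G}+f_{\Delta,G}))$ over any window $A$ of my choosing.

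Two structural facts will control numerator and denominator separately. First, there is \emph{no cancellation} in the numerator. Every component $\frac1\sigma\varphi(\frac{x}{\sigma})$ is symmetric and unimodal at $0$, so for $x\ge \Delta$ each component of $f_{0,G}-f_{\Delta,G}$ has the same (negative) sign. Hence $|f_{0,G}-f_{\Delta,G}|$ on $[\Delta,\infty)$ is at least the contribution of the signal part $G|_{[0,1]}$ alone, which carries mass at least $p$. This contribution is of order $p\,\Delta\, x/s^3\,e^{-x^2/(2s^2)}$ for $\Delta\lesssim s$, and of order $p/s$ when the bump is shifted by more than its width. Second, the denominator is bounded uniformly in the unknown noise part through the elementary bound
\begin{align*}
\sup_{\sigma>0}\frac1\sigma\varphi\Big(\frac x\sigma\Big)\le \frac{c}{|x|}.
\end{align*}
Together with the trivial bound $p/s$ for the signal part, this gives $f_{0,G}(x)+f_{\Delta,G}(x)\lesssim p/s+1/|x|$ on the window.

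I then choose the window $A=[a,2a]$ (shifted by $\Delta$ when $\Delta$ is large) with $a$ tuned against the signal scale. In the regime $\Delta\le1$ this is essentially a Fisher-information lower bound $I(G)\gtrsim p^{4/3}$: the integrand is of order $\Delta^2 (p\,a/s^3)^2/(p/s+1/a)$ over a window of length $a$. The exponent $4/3$ should come from balancing the signal density $p/s$ against the worst-case noise density $1/a$, choosing $a$ between $s$ and roughly $s\,p^{-1/3}$ (times a logarithmic factor, to stay within the Gaussian bulk). The regime $\Delta>1$ is handled the same way. The only change is that the signal bump at $0$ and its translate at $\Delta$ become essentially disjoint, so the numerator is replaced by $(p/s)^2$ on a window of width about $s$ placed near one of the bumps. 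The noise density there is at most $1/\Delta$ (respectively $1/s$), which produces the $\Delta^{2/3}$ growth after optimizing the window. The upper restriction $t\le p/C$ just keeps us away from the trivial ceiling where $H^2$ saturates at the mass $p$ of the signal.

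The main obstacle I expect is that the signal mass need not sit at a single scale: $G|_{[0,1]}$ may be spread over many scales $s\in(0,1]$. My first attempt will be to avoid losing a logarithmic factor from dyadic pigeonholing by exploiting the no-cancellation property more fully. Instead of isolating one scale, I would integrate the signal part's contribution over all $s$ and pick $a$ adapted to a suitable quantile of $G|_{[0,1]}$. If that fails, I would pigeonhole onto a dyadic scale carrying mass $\gtrsim p/\log$ and check that the resulting logarithm can be absorbed into the universal constant via the polynomial slack in the bound $c/|x|$. Verifying that the exponent comes out exactly $p^{4/3}$, uniformly over both the signal profile and the adversarial noise profile, is the delicate computation.
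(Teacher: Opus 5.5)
\textbf{There is a genuine gap: the scheme cannot reach the exponent $4/3$.} Your reformulation as $H^2(f_{0,G},f_{\Delta,G})\gtrsim\min\{p,\,p^{4/3}\min(\Delta^2,\Delta^{2/3})\}$ is correct, and so is the no-cancellation observation. The problem is the rest of the scheme. You keep only the signal part ($\sigma\le 1$) in the numerator and bound the denominator uniformly over the noise part. A noise-independent bound must dominate the density of a single noise atom $(1-p)\delta_\sigma$ for every $\sigma>1$. So it must be $\gtrsim\min\{1,|x|^{-1}\}+\min\{1,|x-\Delta|^{-1}\}$ (say for $p\le\frac12$). Already for the signal $p\delta_1$, this caps what your argument can give, whatever window you choose:
\[
\int\frac{p^2\big(\varphi(x)-\varphi(x-\Delta)\big)^2}{\min\{1,|x|^{-1}\}+\min\{1,|x-\Delta|^{-1}\}}\,\rmd x=O\big(p^2\min\{\Delta^2,1\}\big).
\]
At best this gives $\omega_{H^2,G}(t)\lesssim\sqrt t/p$ for $t\lesssim p^2$, which is off by a factor $p^{-1/3}$. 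Once $t\gtrsim p^2$ it gives no bound at all, hence nothing in the regime $p^{4/3}<t\le p/C$ for small $p$. Your balancing of $p/s$ against $1/a$ with $a\asymp sp^{-1/3}$ does not help: at that distance the signal is suppressed by roughly $e^{-p^{-2/3}/2}$. The multi-scale issue you single out as the main obstacle is secondary.

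\textbf{The missing idea is that the noise must be charged for the density it places near the signal.} Noise at scale $\sigma_0$ masks the signal through its density $\approx 1/\sigma_0$ near the origin. But that noise is itself shifted, and contributes roughly $\min\{\Delta/\sigma_0,1\}^2$ to $H^2$. Take $G=p\delta_1+(1-p)\delta_{\sigma_0}$ in the relevant range $\Delta\le\sigma_0\lesssim 1/p$.
\begin{itemize}
\item For $\Delta\le 1$, one gets roughly $H^2\approx\Delta^2\sigma_0^{-2}+p^2\Delta^2\sigma_0$, minimized at $\sigma_0\asymp p^{-2/3}$.
\item For $\Delta>1$, one gets $H^2\approx\Delta^2\sigma_0^{-2}+p^2\sigma_0$, minimized at $\sigma_0\asymp(\Delta/p)^{2/3}$.
\end{itemize}
This trade-off is exactly the source of $p^{4/3}$ and of the $\Delta^{2/3}$ growth.

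\textbf{How the paper implements this.} It uses two CDF-type tests in \eqref{eq:Hellinger-variational}, writing $\mu$ for the shift.
\begin{itemize}
\item The half-line test $\mathbf{1}_{[\mu,\infty)}$ gives \eqref{eq:ineq_1}, namely $\bE_G[\indc{\sigma\ge\mu}/\sigma]\le c_1\sqrt t/\mu$. This is the step that makes the noise pay for its density near the origin.
\item This bound is substituted into the denominator bound \eqref{eq:denominator} of the interval test $\mathbf{1}_{[\mu,\mu+1]}$. The numerator of that test is at least $c_2G([0,1])$ if $\mu\ge 1$, and at least $c_2\mu\,G([0,1])$ if $\mu<1$.
\item Solving the resulting quadratic inequalities gives $\mu\lesssim t^{3/2}/p^2$ and $\mu\lesssim t^{1/2}/p^{2/3}$ respectively.
\end{itemize}
These tests integrate over $x$, and the lower bounds on $\Xi_{\mu,\sigma}$ hold uniformly for all $\sigma<1$. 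So no scale pigeonholing is needed and no logarithm is lost.
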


\Cref{lemma:functional_inequality} is a functional inequality that exploits the structure of normal scale mixtures and will be proved in \Cref{sec:func_ineq}. 
We note that this inequality is not improvable in general, as witnessed by the example $G=p\delta_1+(1-p)\delta_{c\mu/\sqrt{t}}$, with $\mu=\mu(p,t)$ matching the upper bound in \Cref{lemma:functional_inequality}; we omit the algebra verifying this claim. The condition $t=O(p)$ is also necessary, as convexity of squared Hellinger distance applied to the above choice of $G$ yields
\begin{align*}
H^2(f_{-\mu,G}, f_{\mu,G}) \le 2p + (1-p)H^2(\calN(-\mu,\frac{c^2\mu^2}{t}),\calN(\mu,\frac{c^2\mu^2}{t})) \le t
\end{align*}
regardless of the choice of $\mu$, as long as $t \ge 3p$ and $c$ is a large universal constant. Therefore, when $t \ge 3p$, we have $\omega_{H^2,G}(t) = \infty$. 
We note that \Cref{thm:main} follows directly from \Cref{thm:general} and \Cref{lemma:functional_inequality}, with the subset-of-signals assumption giving $G_n([0,1])\ge \frac{m}{n} =: p$. 
\cref{cor:examples} similarly follows from \Cref{thm:general} and specific calculations of $\omega_{H^2,G_n}(t)$ for the respective choices of $G_n$, which we defer to \Cref{sec:mod_cont}.

\section{Related work}\label{app:related-work}

\paragraph{Heteroskedastic mean estimation.}
The problem of estimating a common location parameter from heterogeneous observations has attracted significant recent attention in both the statistics and theoretical computer science communities. Pensia, Jog, and Loh~\citep{pensia2022estimating} initiated the systematic study of this problem, characterizing minimax rates for location estimation under sample-heterogeneous distributions and proposing estimators based on iterative trimming and median-of-means techniques. Devroye, Lattanzi, Lugosi, and Zhivotovskiy~\citep{devroye2023mean} further developed this line of work, obtaining sharp minimax bounds for heteroscedastic mean estimation under bounded variance assumptions and establishing the phase transition at $m \asymp n^{1/4}$ in the subset-of-signals regime. Both works reveal a fundamental tension in this setting: the sample mean is efficient when variances are known but highly sensitive to large variances, while robust alternatives such as the median~\citep{huber1964robust,hampel1986robust} are resilient but often suboptimal.

The heteroscedastic setting studied here differs from the classical Huber contamination model~\citep{huber1964robust,huber1981robust}, where an $\varepsilon$-fraction of observations are adversarially corrupted. In the contamination model, robust estimators aim for error scaling with $\varepsilon$~\citep{diakonikolas2019robust,lai2016agnostic,chen2018robust}, whereas in our setting the noise levels can vary continuously across samples, leading to different minimax rates and requiring distinct estimation strategies.

\paragraph{Robust statistics and breakdown point.}
Classical robust statistics provides a rich toolkit for handling outliers and heavy-tailed distributions. Foundational work by Tukey~\citep{tukey1960survey,tukey1975mathematics}, Huber~\citep{huber1964robust,huber1981robust}, and Hampel~\citep{hampel1971general,hampel1986robust} established the concepts of breakdown point, influence function, and M-estimation that underpin modern robust methods. The median achieves the optimal breakdown point of $1/2$, meaning it remains bounded even when nearly half the observations are arbitrarily corrupted. Trimmed means~\citep{bickel1965some,stigler1973simon} offer a tunable tradeoff between efficiency and robustness. However, these classical estimators do not achieve the optimal rates in the heteroscedastic setting characterized by~\citep{pensia2022estimating,devroye2023mean}, motivating the search for new approaches.

\paragraph{Empirical Bayes and compound decision theory.}
Empirical Bayes methodology, pioneered by Robbins~\citep{robbins1951asymptotically,robbins1956empirical}, provides a principled framework for estimation problems with many latent parameters. In the compound decision setting, one observes $X_i \sim P_{\theta_i}$ for $i = 1,\ldots,n$ and aims to estimate the parameter vector $(\theta_1,\ldots,\theta_n)$. The empirical Bayes approach treats the parameters as draws from an unknown prior $G$ and estimates $G$ from the marginal distribution of the observations. The nonparametric maximum likelihood estimator (NPMLE) for the mixing distribution, introduced by Kiefer and Wolfowitz~\citep{kiefer1956consistency}, plays a central role in this framework. Laird~\citep{laird1978nonparametric} developed practical algorithms for computing the NPMLE, and Lindsay~\citep{lindsay1983geometry} established that it is a discrete measure with at most $n$ support points. More recently, Polyanskiy and Wu~\citep{polyanskiy2020self} discovered a remarkable self-regularization property of the NPMLE: even without explicit regularization, the NPMLE for subgaussian mixtures has $O(\log n)$ support points with high probability. The REBayes package~\citep{koenker2014convex,koenker2017rebayes} provides efficient implementations of NPMLE-based procedures.

A major breakthrough in the theoretical study of empirical Bayes was made by Jiang and Zhang \citep{jiang2009general} in the Gaussian compound decision problem. This work inspires a flurry of subsequent studies in Gaussian \citep{brown2009nonparametric,saha2020nonparametric,polyanskiy2021sharp,ghosh2025stein} and Poisson models \citep{brown2013poisson,polyanskiy2021sharp,shen2022empirical,jana2023empirical,jana2025optimal,han2025besting}. We also refer to a recent book \citep{efron2024empirical} for an overview of empirical Bayes approaches. 

Our approach differs from classical empirical Bayes in an important way: while standard empirical Bayes treats the heterogeneous parameters $(\sigma_1,\ldots,\sigma_n)$ as the primary estimation target and applies Bayes rules, we treat them as nuisance parameters and focus on estimating the common mean $\mu$ through a profile likelihood~\citep{murphy2000profile,severini2000likelihood}. This perspective connects our work to the literature on profile likelihood inference with infinite-dimensional nuisance parameters~\citep{murphy1997semiparametric,shen1997methods}. Nevertheless, our approach is still based on an important idea of empirical Bayes, where we replace heterogeneity by a mixing distribution and learn it from data. 

\paragraph{Density estimation and sieve MLE theory.}
The analysis of our estimator relies on density estimation theory for the MLE in mixture models. The foundational work of Wong and Shen~\citep{wong1995probability} established probability inequalities for likelihood ratios and convergence rates for sieve MLEs, showing that the rate is governed by the Hellinger metric entropy of the density class. Van de Geer~\citep{geer2000empirical} developed a comprehensive theory of M-estimation with empirical processes. This similar idea is used in our entropic upper bound in \Cref{lemma:entropic-upper-bound}. 

Specializing to the MLE in mixture models, its density estimation performance under the Hellinger distance has been studied in both normal location mixture \citep{ghosal2001entropies,ghosal2007posterior,zhang2009generalized}, normal scale mixture \citep{ghosal2001entropies,ghosal2007posterior,ignatiadis2025empirical}, and Poisson mixture \citep{shen2022empirical,jana2025optimal} models. Existing approaches are mainly based on the idea of (local) moment matching and polynomial approximations~\citep{birge1998minimum,genovese2000rates}, usually leading to $\mathsf{polylog}(\frac{1}{\varepsilon})$ dependence on the radius $\varepsilon$ in the metric entropy bound. However, the existing dependence on $\frac{\sigmax}{\sigmin}$ in the normal scale mixture is not tight, and our analysis improves the dependence to logarithmic using generalized method of moments and Chebyshev polynomial approximations, which better capture the analytic structure of the Gaussian kernel. This improvement is crucial for obtaining minimax-optimal rates that are robust to the ratio $\frac{\sigmax}{\sigmin}$. 

\paragraph{Modulus of continuity and minimax lower bounds.}
Our upper bounds are expressed in terms of the Hellinger modulus of continuity of the location family, following the geometric approach to minimax theory developed by Donoho and Liu~\citep{donoho1991geometrizing2}. Le Cam's two-point method~\citep{lecam1973convergence,lecam1986asymptotic} provides matching lower bounds: if $H^2(f_{\mu_1,G}, f_{\mu_2,G}) \le 1/n$, then one cannot reliably distinguish $\mu_1$ from $\mu_2$, yielding a lower bound of $\omega_{H^2,G}(1/n)$ for estimating $\mu$. Therefore, \Cref{thm:general} can be viewed as a duality result to Le Cam's lower bound; such duality has appeared in the literature for linear functionals \citep{donoho1991geometrizing2,juditsky2009nonparametric,polyanskiy2019dualizing} and more recently for mean estimation in location families \citep{compton2025attainability}. There the mixing distribution $G_n$ is known, and the estimator is of a different Birg\'e--Le Cam type. In comparison, our empirical Bayes perspective shows that the profile MLE attains a comparable bound even when $G_n$ is unknown. 

% The functional inequality in \Cref{lemma:functional_inequality} characterizes this modulus for scale mixtures, capturing the phase transition in the minimax rate.

\paragraph{Computation.}
The optimization problem in \Cref{eq:EB_MLE} is nonconvex due to the joint optimization over $(\mu, G)$, but for fixed $\mu$ reduces to a convex problem in $G$. Algorithms for computing the NPMLE in mixture models include the EM algorithm~\citep{dempster1977maximum,laird1978nonparametric}, interior point methods~\citep{lesperance1992algorithm}, and the Frank--Wolfe algorithm~\citep{frank1956algorithm,jaggi2013revisiting}. Koenker and Mizera~\citep{koenker2014convex} reformulated the NPMLE computation as a convex optimization problem, enabling efficient solutions for large-scale problems. Our implementation uses a successive maximization strategy that alternates between Frank--Wolfe updates for $G$ and line search for $\mu$; similar block coordinate approaches are common in semiparametric estimation~\citep{murphy1999current,vaart1998asymptotic}.

% Nevertheless, our approach is still based on an important idea of empirical Bayes, where we replace heterogeneity by a mixing distribution and learn it from data. 

% Normal EB: 
% Poisson EB: 
% The key connection between 

% Survey 

\section{Density Estimation: Proof of \texorpdfstring{\Cref{lemma:density_estimation}}{Lemma}}\label{sec:density_est}
%%%%%%%%%%%%%%%%%%%%%%%%%%%%%%%%%%%%%%%%%%%%%%%%%%%%%%%%%%%%%%%%%

% \input{prelims.tex}

In this section we prove the density estimation guarantee of the MLE in \Cref{lemma:density_estimation}. We begin with a general entropic upper bound of the Hellinger distance in the compound (or i.n.i.d.) setting, and then present a metric entropy upper bound for the normal scale mixture with a fixed mean. The remaining details will be devoted to allowing $\mu$ to vary; we defer them to the appendix. 

\subsection{Entropic Upper Bound of the MLE in the Compound Setting}
In this section we review and state the classical entropic upper bound for density estimation via the MLE, in a general i.n.i.d. (independent and \emph{non-identically} distributed) setting. Let $X_1,\dots,X_n$ be independent, with $X_i\sim P_i$. Given a class $\calP$ of distributions such that $\overline{P}:=\frac{1}{n}\sum_{i=1}^n P_i \in \calP$, let $\widehat{P}$ be a $\beta$-approximate MLE with $
\prod_{i=1}^n \widehat{P}(X_i) \ge \beta \cdot \max_{P\in \calP} \prod_{i=1}^n P(X_i).$
To bound the Hellinger distance $H(\widehat{P}, \overline{P})$, we need the following notations for the metric entropy. For $\delta>0$, define the localized family $
\calP(\overline{P}, \delta) := \{ P\in \calP: H(P, \overline{P}) \le \delta \}. $

\begin{definition}[Bracketing Number]
The ($\delta$-local) Hellinger bracketing number at scale $\varepsilon$ around $\overline{P}$, denoted by $\bracketing(\varepsilon, \calP(\overline{P},\delta), H)$, is the smallest integer $N$ such that there exist $N$ brackets of (positive) measures $[P_1^{\rL}, P_1^{\rU}],\dots,[P_N^{\rL}, P_N^{\rU}]$ such that $\max_{i\in [N]} H(P_i^{\rL}, P_i^{\rU})\le \varepsilon$,
% \footnote{Here the definition of $H(\mu,\nu):=\int (\sqrt{\rmd \mu/\rmd \gamma}-\sqrt{\rmd \nu/\rmd \gamma})^2 \rmd \gamma$ is extended to non-probability measures in the natural way, with any dominating measure $\gamma$.} 
and for every $P\in \calP(\overline{P},\delta)$, there exists $i\in [N]$ such that $P_i^{\rL} \le P \le P_i^{\rU}$ (with $\le$ being pointwise).
\end{definition}

In the above definition, we extend the definition of Hellinger distance between two probability measures in a natural way to general measures $\mu, \nu$ by $H^2(\mu,\nu) := \int (\sqrt{\rmd \mu}-\sqrt{\rmd \nu})^2$. 
% The (local) Hellinger bracketing number $N_{[]}(\varepsilon, \calP(\delta), H)$ is the smallest integer $N$ with the following property: there exist $N$ brackets of (non-probability) measures $[P_1^{\rL}, P_1^{\rU}],\dots,[P_N^{\rL}, P_N^{\rU}]$ such that $\max_{i\in [N]} H(P_i^{\rL}, P_i^{\rU})\le \varepsilon$,\footnote{Here the definition of $H(\mu,\nu):=\int (\sqrt{\rmd \mu/\rmd \gamma}-\sqrt{\rmd \nu/\rmd \gamma})^2 \rmd \gamma$ is extended to non-probability measures in the natural way, with any dominating measure $\gamma$.} and for every $P\in \calP(\delta)$, there exists $i\in [N]$ such that $P_i^{\rL} \le P \le P_i^{\rU}$ (with $\le$ understood in a pointwise manner). 
The following result states a general entropic upper bound on the Hellinger distance $H(\widehat{P}, \overline{P})$.

\begin{lemma}\label{lemma:entropic-upper-bound}
There exists a universal constant $C>0$ such that the following holds. Let
\begin{align*}
\Psi(\delta) \ge \int_{\delta^2/C}^{\delta} \sqrt{\log \bracketing(u,\calP(\overline{P}, \delta),H)}\rmd u \vee \delta
\end{align*}
be any function such that $\Psi(\delta)/\delta^2$ is non-increasing in $\delta$, and $\delta_n>0$ satisfy $\sqrt{n}\delta_n^2 \ge C \Psi(\delta_n) $. Then for all $\delta\ge \delta_n$, any $\beta$-approximate MLE $\widehat{P}$ satisfies
\begin{align*}
\bP\pth{ H^2(\widehat{P}, \overline{P}) \ge \delta^2 + \frac{C}{n}\log\frac{1}{\beta} } \le C\exp\bpth{-\frac{n\delta^2}{C^2}}. 
\end{align*}
\end{lemma}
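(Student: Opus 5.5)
We follow the classical Wong--Shen / van de Geer route, adapted to the i.n.i.d.\ setting. The adaptation is possible because the likelihood is a product over independent coordinates and the comparison density $\overline{P}$ is the average of the $P_i$.

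\textbf{Step 1: reduction to a uniform likelihood-ratio bound.} If $H^2(\widehat{P},\overline{P})\ge \delta^2 + \frac{C}{n}\log\frac1\beta$, then, since $\overline{P}\in\calP$ and $\widehat{P}$ is $\beta$-approximate,
\[
\sup_{P\in\calP:\,H(P,\overline{P})\ge\delta}\ \prod_{i=1}^n \frac{P(X_i)}{\overline{P}(X_i)} \ \ge\ \beta .
\]
We peel $\{H(P,\overline{P})\ge\delta\}$ into shells $S_j=\{2^{j}\delta\le H(P,\overline{P})<2^{j+1}\delta\}$. On each shell we bound the probability that the supremum of the likelihood ratio exceeds $\beta$. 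The extra slack $\frac{C}{n}\log\frac1\beta$ is chosen so that it absorbs the factor $\beta^{-1}$ in the union bound below.

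\textbf{Step 2: a single-function bound in the i.n.i.d.\ setting.} For fixed $P$, Markov's inequality with exponent $1/2$ gives
\[
\bP\Big(\prod_i \tfrac{P(X_i)}{\overline{P}(X_i)}\ge\beta\Big)\le \beta^{-1/2}\prod_i \bE_{P_i}\sqrt{P/\overline{P}}.
\]
By AM--GM, this product is at most $\big(\frac1n\sum_i\bE_{P_i}\sqrt{P/\overline{P}}\big)^n$. Since the $P_i$ average to $\overline{P}$, the inner average equals $\bE_{\overline{P}}\sqrt{P/\overline{P}}=1-H^2(P,\overline{P})/2$. This is exactly where the condition $\overline{P}\in\calP$ enters: the i.n.i.d.\ product collapses to the i.i.d.\ quantity under $\overline{P}$. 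To pass from a single $P$ to brackets, we replace $P$ by an upper bracket $P^{\rU}$. One must handle that $P^{\rU}$ is not normalized, so $\bE_{\overline{P}}\sqrt{P^{\rU}/\overline{P}}\le 1-H^2(P,\overline{P})/2+O(\varepsilon)$. Following Wong--Shen, it is cleaner to use the modified ratio $(P^{\rU}+\overline{P})/(2\overline{P})$, or to truncate.

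\textbf{Step 3: chaining, the main obstacle.} A single-scale net argument only yields the entropy at one scale. To obtain the bracketing integral $\int_{\delta^2/C}^{\delta}\sqrt{\log \bracketing}$, we need a chaining argument with truncation for the log-likelihood ratio process. Concretely, we apply Bernstein-type bracketing chaining (van de Geer, Theorem 5.11 style) to the centered empirical process
\[
\nu_n(g)=\frac{1}{\sqrt n}\sum_i\big(g(X_i)-\bE_{P_i}g\big), \qquad g=\tfrac12\log\frac{P+\overline{P}}{2\overline{P}}.
\]
These functions are bounded below, and their Bernstein norm is controlled by $H(P,\overline{P})$. The independence of the coordinates suffices for the Bernstein chaining, since only sums of independent terms and their variances under the average measure appear. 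The lower limit $\delta^2/C$ reflects where the chaining stops, because the residual bias is then $O(\sqrt n\delta^2)$. The condition $\sqrt n\delta_n^2\ge C\Psi(\delta_n)$, together with the monotonicity of $\Psi(\delta)/\delta^2$ that carries it to every shell radius $2^j\delta$, makes the supremum of $\nu_n$ on shell $j$ smaller than the drift $\sqrt n(2^j\delta)^2/C$ with probability $1-C\exp(-n4^j\delta^2/C^2)$.

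\textbf{Step 4: conclusion.} Finally, we use that $\log$ of the midpoint ratio being small forces the likelihood ratio to be small. Then we sum the shell probabilities geometrically to get $C\exp(-n\delta^2/C^2)$.
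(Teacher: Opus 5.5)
Your proposal is correct and follows essentially the same route as the paper. Both arguments control the centered i.n.i.d.\ empirical process of $\frac12\log\frac{P+\overline{P}}{2\overline{P}}$, whose averaged Bernstein norm collapses to a Hellinger quantity under $\overline{P}=\frac1n\sum_i P_i$, by bracketing chaining in van de Geer's i.n.i.d.\ form plus peeling, and both link it to the likelihood via the concavity step $\log\frac{P}{\overline P}\le 2\log\frac{P+\overline P}{2\overline P}$; your Markov/AM--GM Step 2 is not needed.

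One small fix: peel from $\delta'=(\delta^2+\frac{C}{n}\log\frac1\beta)^{1/2}$ rather than from $\delta$. In the chaining route, $\beta$ enters as an additive $O(\log\frac1\beta)$ shift against the drift $\gtrsim nH^2(P,\overline P)$, which you get from $\KL\ge H^2$ and $H^2(\frac{P+\overline P}{2},\overline P)\ge\frac1{16}H^2(P,\overline P)$. That shift is absorbed only when $n\delta'^2\gtrsim\log\frac1\beta$.
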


In the i.i.d. case where $P_i\equiv P$, \Cref{lemma:entropic-upper-bound} reduces to the classical upper bounds on $H^2(\widehat{P},P)$ in \citep{wong1995probability,geer2000empirical}. The same proof technique extends to the i.n.i.d. setting, where the MLE $\widehat{P}$ is instead close to the average distribution $\overline{P}$. In heteroskedastic mean estimation, this average distribution is $\overline{P} = \frac{1}{n}\sum_{i=1}^n \calN(\mu,\sigma_i^2) = f_{\mu,G_n}$, which motivates the form of \Cref{lemma:density_estimation}. For completeness, we include the proof of \Cref{lemma:entropic-upper-bound} in the appendix, as the i.n.i.d. case in \citep[Chapter 8.3]{geer2000empirical} does not cover this precise setting. 
% For completeness, 

\subsection{Improved Covering Results for the Normal Scale Mixture}

The central metric entropy bound in this section is the following: 
\begin{theorem}\label{thm:metric-entropy}
Let $\calP = \{f_{\mu, G}: \mu\in \mathbb{R}, \supp(G)\subseteq [\sigmin,\sigmax] \}$ and $\overline{P}\in \calP$. Then for $0<\varepsilon\le \delta\le \frac18$, there is a universal constant $C>0$ such that
\begin{align*}
\log \bracketing(\varepsilon, \calP(\overline{P}, \delta), H) \le C\log^3\bpth{\frac{\sigmax}{\varepsilon \sigmin}}\log^4\log\bpth{\frac{\sigmax}{\varepsilon \sigmin}}. 
\end{align*}
\end{theorem}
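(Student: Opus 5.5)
We bound the bracketing number by first reducing to a sup-norm (or weighted $L_\infty$) cover, then building that cover by generalized moment matching. Normalize $\sigmin=1$, $\sigmax=R:=\sigmax/\sigmin$ (the family is scale-equivariant and Hellinger is scale-invariant). The plan has three steps: (i) localize $\mu$ to a bounded range using the locality $H(P,\overline{P})\le\delta\le\frac18$; (ii) for fixed $\mu$, build a finite set of mixtures $f_{0,G_j}$ such that every $f_{0,G}$ is within $\eta$ of some $f_{0,G_j}$ in a weighted sup-norm on a truncated range; (iii) convert this cover into Hellinger brackets by the standard construction $P^{\rL}=\max(f_j-\eta w,0)$, $P^{\rU}=(f_j+\eta w)\wedge \text{envelope}$, with tails handled by the envelope $\sup_{\sigma\in[1,R]}\frac1\sigma\varphi(\frac{x}{\sigma})\lesssim \min(1,1/|x|)$ beyond $|x|\gtrsim R\sqrt{\log(1/\varepsilon)}$.

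For step (i), since $\delta\le\frac18$ and every member of $\calP$ has the bulk of its mass within $O(R)$ of its center, $H(f_{\mu,G},\overline{P})\le\frac18$ forces $|\mu-\bar\mu|\lesssim R\sqrt{\log(1/\varepsilon)}$, hence a constant-probability overlap. We grid $\mu$ at spacing $\varepsilon^{2}$ (location shifts of size $\varepsilon^2$ move Hellinger by at most $O(\varepsilon^2)$ since $\sigma\ge1$). This costs only $O(\log(R/\varepsilon))$ in entropy. The remaining work is a uniform cover of $\{f_{0,G}\}$.

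Step (ii) is the core. We split $[1,R]$ into $O(\log R)$ dyadic blocks $[2^j,2^{j+1}]$ and decompose $G=\sum_j w_j G_j$; the weights $w_j$ are discretized on a grid with $O(\log R\cdot\log(1/\varepsilon))$ entropy. For $\sigma$ in a block and $|x|\le T$, we seek a separable approximation $\frac1\sigma\varphi(\frac x\sigma)\approx\sum_{k\le L}a_k(\sigma)g_k(x)$. Writing $\frac1\sigma\varphi(x/\sigma)=\frac1\sigma\varphi(x\cdot 2^{-j})^{s}$ with $s=4^{j}/\sigma^2\in[\frac14,1]$, i.e.\ $e^{-s y^2/2}$ with $y=x2^{-j}$, we expand in Chebyshev polynomials in $s$ on $[\frac14,1]$. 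The function $s\mapsto e^{-sy^2/2}$ is entire. On a Bernstein ellipse $E_\rho$ it is bounded by $e^{c\rho y^2}$, so Chebyshev truncation at degree $L$ has error $\lesssim e^{c\rho y^2}\rho^{-L}$. For $|y|\le \sqrt{\log(1/\varepsilon)}$ the Gaussian itself is negligible, and optimizing $\rho$ gives $L\asymp \log(1/\varepsilon)\cdot\mathrm{polyloglog}$; the $\log\log$ powers in the theorem arise from this optimization. Hence $f_{0,G_j}$ is determined to accuracy $\eta$ by $L$ generalized moments $\int a_k\,dG_j$, with $a_k(\sigma)=\frac1\sigma T_k(s(\sigma))$ bounded. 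These moments lie in a bounded set of $\bR^L$, so an $\eta/\mathrm{poly}$-grid costs $L\log(1/\eta)$ entropy per block. By Carathéodory the grid points can be realized by discrete $G_j$. Summing over blocks gives
\[
\log(\text{cover})\lesssim \log R\cdot L\cdot\log\tfrac1\varepsilon\lesssim \log^3\bpth{\tfrac{R}{\varepsilon}}\,\mathrm{polyloglog}.
\]

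The main obstacle is step (ii). The naive Taylor expansion in $\sigma$ across the full range $[1,R]$ gives poly$(R)$ degree; the blockwise reparametrization in $s$ and the Chebyshev/Bernstein-ellipse bound are what keep each block's degree at polylog. One subtlety is the large-$|x|$ region relative to a block: for $|y|\gg\sqrt{\log(1/\varepsilon)}$ the contribution is already below $\varepsilon$ pointwise and is absorbed into the envelope.

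Step (iii) needs the weighted sup error $\eta$ to translate into Hellinger bracket width $\varepsilon$. We take $\eta=\varepsilon^2/\mathrm{poly}(R\log(1/\varepsilon))$ relative to the envelope. This costs only logarithmic factors, which are absorbed into the stated $\log^3\log^4\log$ bound.
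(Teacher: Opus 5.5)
Your proposal is correct, and its core step (ii) takes a genuinely different route from the paper's.

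\textbf{How the two routes differ.} The paper builds one global separable approximation. With $t=1/\sigma$ and $u=\log t+\log x$, it approximates $\exp(-\tfrac12 e^{2u})$ by a Chebyshev polynomial in $u$ over an interval of length $\lambda=\log\frac{t_{\max}x_{\max}}{t_{\min}x_{\min}}$. This forces several things:
\begin{itemize}
\item truncating $x$ to $[x_{\min},x_{\max}]$ and $t$ away from $0$;
\item a thin Bernstein ellipse $\rho=1+\frac{\pi}{2\lambda}$;
\item degree $L\asymp\lambda\log(\lambda/\varepsilon')$;
\item a basis $t(\log t)^i$, $(\log x)^j$ whose coefficients grow like $2^{O(L)}\lambda^{O(L)}$.
\end{itemize}
You instead localize in scale. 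Each dyadic block has aspect ratio $2$, so the expansion in $s=(2^j/\sigma)^2\in[\frac14,1]$ converges at a fixed geometric rate with bounded $c_k(y)$ and bounded $a_k(\sigma)$. That gives $O(\log(1/\eta))$ moments per block, and the $O(\log R)$ blocks are glued together by a weight net. This is local moment matching with dyadic rather than equal-length cells, and it works.

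\textbf{Simplifications and corrections to step (ii).}
\begin{itemize}
\item You can take $\rho=3$. Then $\mathrm{Re}\,s\ge 0$ on $E_\rho$, so $|e^{-sy^2/2}|\le 1$ there for every real $y$. The truncation error is then $3^{-L}$ uniformly in $x\in\bR$, with no $y$-truncation and no optimization over $\rho$.
\item Consequently the $\log\log$ factors in the theorem do not come from your step, contrary to what you suggest.
\item The per-block degree should be written $L\asymp\log(1/\eta)\asymp\log(R/\varepsilon)$, not $\log(1/\varepsilon)$, since $\eta=\varepsilon^2/\mathrm{poly}(R\log\frac1\varepsilon)$.
\item With that correction your total is $(1+\log R)\log^2(R/\varepsilon)\lesssim\log^3(R/\varepsilon)$.
\end{itemize}

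\textbf{What each route buys.} The paper's route gives a single, block-free family of generalized moments and illustrates why Chebyshev beats Taylor on a long interval. Yours is simpler; on a bounded-ratio block even Taylor would do. It also has transparent exponents. In the paper's route, $\lambda\asymp\log(1/\varepsilon')$, so $L\asymp\lambda\log(\lambda/\varepsilon')$ is already of order $\log^2(\sigmax/(\varepsilon\sigmin))$. Pushing that through its count $L^2\log(AGL^2/\varepsilon)$ gives a power of the logarithm above $3$. Your blocking is therefore the cleaner way to reach the stated bound.

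\textbf{Two details to tighten.}
\begin{itemize}
\item \emph{Location grid.} You justify gridding $\mu$ only by a Hellinger estimate, but brackets need pointwise sandwiching. Take $\inf_{|z|\le a}P^{\rL}(\cdot-\mu_j-z)$ and $\sup_{|z|\le a}P^{\rU}(\cdot-\mu_j-z)$, as the paper does. Your $P^{\rL},P^{\rU}$ are even and non-increasing on $[0,\infty)$. So this adds $L_1$ width at most $2a\,(P^{\rU}(0)+P^{\rL}(0))\lesssim a$ (with $\sigmin=1$), and spacing $a\asymp\varepsilon^2$ does suffice.
\item \emph{Tail envelope.} $\min(1,1/|x|)$ is not integrable. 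For $|x|\ge R$ the envelope $\sup_{\sigma\in[1,R]}\frac1\sigma\varphi(x/\sigma)$ equals $\frac1R\varphi(x/R)$. Its mass beyond $B\asymp R\sqrt{\log(R/\varepsilon)}$ is at most $\varepsilon^2$, and that Gaussian decay is what controls the bracket widths.
\end{itemize}
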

% \YH{TODO: find an explicit $c$, and adjust theorem numbering.}

In conjunction with \Cref{lemma:entropic-upper-bound}, \Cref{thm:metric-entropy} gives the target upper bound of \Cref{lemma:density_estimation}. The main technical challenge in the proof of \Cref{thm:metric-entropy} is to derive the bracketing entropy bound for the class of \emph{normal scale mixtures}, i.e., the subset of $\calP$ with $\mu=0$ and a general mixture distribution $G$ on $[\sigmin, \sigmax]$. Specifically, using a reparametrization $t=1/\sigma$ and standard truncation arguments (deferred to \Cref{append:metric-entropy}), \Cref{thm:metric-entropy} follows from the following technical lemma. 

\begin{lemma}\label{lemma:key-covering}
Let $0<t_{\min}\le 1\le t_{\max}$ and $0<x_{\min}\le 1\le x_{\max}$. Let $\calF$ be the class of all functions $\bE_{t\sim H}[t\exp(-t^2x^2/2)]$ with $H\in \calP([t_{\min},t_{\max}])$, and $L_\infty([x_{\min},x_{\max}])$ denotes the sup norm on the interval $[x_{\min},x_{\max}]$. Then for $\varepsilon\in (0,1/2)$, 
\begin{align*}
\log N(\varepsilon, \calF, L_\infty([x_{\min},x_{\max}])) \le C\log^3\pth{\frac{t_{\max}x_{\max}}{\varepsilon t_{\min}x_{\min}}}\log^4\log\pth{\frac{t_{\max}x_{\max}}{\varepsilon t_{\min}x_{\min}}}. 
\end{align*}
\end{lemma}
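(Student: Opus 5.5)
Our plan is to build a separable approximation
\[
t\exp(-t^2x^2/2) \approx \sum_{k=1}^{L} a_k(t) g_k(x),
\]
valid uniformly for $t\in[t_{\min},t_{\max}]$ and $x\in[x_{\min},x_{\max}]$, with error at most $\varepsilon/2$. Given such an expansion, every $f\in\calF$ is within $\varepsilon/2$ of $\sum_k m_k g_k(x)$, where $m_k=\bE_H[a_k(t)]$ is a generalized moment. The covering problem then reduces to quantizing the vector $(m_1,\dots,m_L)$. This gives a bound of the form
\[
\log N \lesssim L\log(L B/\varepsilon),
\]
where $B$ bounds $\sup|a_k|\cdot\sup|g_k|$. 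We therefore need $L=O(\log^2\cdot\log^4\log)$ and $B$ at most exponential in $\mathrm{polylog}$.

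\textbf{Step 1: reduce to a few scales.} Substitute $u=\log t$ and $v=\log x$. The kernel becomes $e^{u}\exp(-e^{2(u+v)}/2)$, a function of $s=u+v$ times $e^u$. We would write $t\exp(-t^2x^2/2)=x^{-1}\,\psi(tx)$ with $\psi(y)=y e^{-y^2/2}$. The product $y=tx$ ranges over $[t_{\min}x_{\min},t_{\max}x_{\max}]$, but $\psi(y)<\varepsilon$ once $y\gtrsim\sqrt{\log(1/\varepsilon)}$. Likewise, for $y\le\varepsilon x_{\min}$ the contribution is below $\varepsilon$ (after the factor $1/x\le 1/x_{\min}$; we rescale $\varepsilon$ accordingly). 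So in $s=\log y$ the relevant window is $[\log(\varepsilon x_{\min}),\log\sqrt{\log(1/\varepsilon)}]$, of length $\Lambda\approx\log\frac{t_{\max}x_{\max}}{\varepsilon t_{\min}x_{\min}}$. Outside the window we replace $\psi$ by a known constant approximation, namely $0$ above and the linear part $\psi(y)\approx y$ below. The linear part is already separable: $t\cdot x\cdot x^{-1}$, which contributes the single moment $\bE_H t$.

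\textbf{Step 2: Chebyshev approximation in the log variable.} The function $s\mapsto\psi(e^s)$ is entire in $s$, so we can approximate it on the window of length $\Lambda$ by Chebyshev polynomials of degree $D$. Substituting $e^{s}=tx$ into a polynomial in $s=\log t+\log x$ and expanding binomially gives a separable sum with $L=O(D^2)$ terms $(\log t)^j(\log x)^{i}$. To control $D$, map the window to $[-1,1]$ and use the Bernstein ellipse $E_\rho$. The ellipse has semi-minor axis $\approx\Lambda(\rho-\rho^{-1})/4$ in the $s$-plane. We need $|\exp(-e^{2s}/2)|$ to stay bounded there, i.e. $\mathrm{Re}\,e^{2s}\ge -O(\cdot)$, which requires $|\mathrm{Im}\,s|\le\pi/4$ in the region where $\mathrm{Re}\,s$ is large. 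Taking $\rho=1+c/\Lambda$ keeps the imaginary excursion $O(1)$. The standard Chebyshev bound $\|\psi-p_D\|\le 2M\rho^{-D}/(\rho-1)$ then forces $D\approx\Lambda\log(\Lambda/\varepsilon)$. This gives $L=O(D^2)$, roughly $\Lambda^4$, which is too many.

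\textbf{Step 3: the main obstacle, and the refinement.} Naive global Chebyshev is too lossy, so we would partition the $s$-window into dyadic-like pieces of unit length. On each unit interval, $\psi(e^s)$ is analytic in a fixed-size Bernstein ellipse, so Chebyshev of degree $O(\log(1/\varepsilon))$ suffices. A partition of $t$ alone does not separate the variables, however. Instead we would use a transformed basis: $g_k(x)=T_k(\text{affine}(\log x))$ restricted per piece, with coefficients $a_k(t)$ being integrals of $t\,e^{-t^2x^2/2}$ against Chebyshev weights. Here the sup of $|a_k|$ stays bounded by $\max|\psi|/x_{\min}$. The $L_\infty$ error is then controlled piecewise. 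With $O(\Lambda)$ pieces in $x$, each of degree $O(\log(1/\varepsilon)\log\log)$, we get $L\approx\Lambda\cdot\Lambda\cdot\mathrm{polyloglog}$. Quantizing each generalized moment to precision $\varepsilon/(LB)$ costs an extra factor $\log(LB/\varepsilon)\approx\Lambda$, yielding $\Lambda^3\log^4\log\Lambda$.

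The hard part is choosing the basis $\{a_k,g_k\}$ so that the number of terms is only $\Lambda^2\,\mathrm{polyloglog}$ while the coefficients stay polynomially bounded. We would try a Chebyshev expansion in $\log x$ on the full window first, with $\rho-1\asymp 1/\Lambda$ chosen so that the analytic continuation stays in the sector $|\arg(tx)^2|<\pi/2$, and check that $\log^4\log$ emerges from the $\log(\Lambda/\varepsilon)$ terms.
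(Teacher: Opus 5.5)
Your Step 2 is, in substance, the paper's proof. With $\lambda=\log\frac{t_{\max}x_{\max}}{t_{\min}x_{\min}}$, the paper takes one Chebyshev polynomial of degree $L\asymp\lambda\log(\lambda t_{\max}/\varepsilon)$ in $u=\log t+\log x$ over the whole range, with $\rho-1\asymp 1/\lambda$ so that $|\mathrm{Im}\,u|\le\pi/4$. It expands this binomially into $O(L^2)$ terms $t(\log t)^j(\log x)^i$ with coefficients of size $e^{O(L\log\lambda)}$, and then discretizes $H$ via Carath\'eodory; that last step is why it needs the condition $t|a_k'(t)|\le A$.

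Your Step 3 is a genuinely different route, and it does prove the lemma. You cut $[\log x_{\min},\log x_{\max}]$ into unit intervals and, on each, expand $x\mapsto te^{-t^2x^2/2}$ in Chebyshev polynomials of the rescaled $\log x$. The $t$-dependent Chebyshev coefficients serve as $a_k(t)$, and you quantize $(\bE_H a_k)_k$ directly, which needs no derivative bound on $a_k$. The one estimate you must write out is uniformity in $t$: for $|\mathrm{Im}\log x|\le \pi/4$,
\[
\bigl|t e^{-t^2x^2/2}\bigr| = t\exp\bigl(-\tfrac12 t^2 e^{2\,\mathrm{Re}\log x}\cos(2\,\mathrm{Im}\log x)\bigr)\le t_{\max}.
\]
For a unit interval the Bernstein ellipse with $\rho=2$ stays inside this strip. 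Hence degree $D=O(\log(t_{\max}/\varepsilon))$ per piece works for all $t$ simultaneously, with $|a_k|\le 2t_{\max}$ and $|g_k|\le 1$. Quantize with spacing $\varepsilon/(2(D+1))$, which suffices because only one piece is active at each $x$. This gives $\log N=O\bigl((1+\log\frac{x_{\max}}{x_{\min}})\log^2\frac{t_{\max}}{\varepsilon}\bigr)$: order $\log^3$, with no $\log\log$ factors and no dependence on $t_{\min}$.

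The paper's route gets an explicit global basis from a single use of the ellipse theorem. Yours keeps $\rho$ constant, the coefficients bounded, and the term count unsquared. Your objection to Step 2 is in fact well founded. The paper's count ($O(L^2)$ terms, $\log(AG)$ of order $L\log\lambda$) gives about $L^3\log\lambda$. When $\lambda$ and $\log(1/\varepsilon)$ are both of order $\Lambda$, that is of order $\Lambda^6$ rather than $\Lambda^3$, and the paper's last display glosses over this. The squaring comes from the monomial expansion, not from the degree. Re-expanding the same global polynomial in Chebyshev polynomials of the rescaled $\log x$ gives $L+1$ terms with coefficients $O(t_{\max})$, so the fallback in your last paragraph would also work.

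Do discard Step 1, however. Replacing the kernel by $t$ when $tx$ is small and by $0$ when $tx$ is large is a case split on $tx$, which is not of the form $\sum_k a_k(t)g_k(x)$. Also, the claim that the contribution for $y\le\varepsilon x_{\min}$ is below $\varepsilon$ is false, since there $te^{-t^2x^2/2}\approx t\ge t_{\min}$. Step 3 needs no truncation at all, provided the pieces cover the whole of $[\log x_{\min},\log x_{\max}]$, because the strip bound above holds whatever the size of $tx$. With that, the ``hard part'' you leave open at the end is already settled by Step 3, and the extra $\log\log$ factors you budget for are unnecessary.
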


In \Cref{lemma:key-covering}, the $\text{polylog}(\frac{1}{\varepsilon})$ dependence on $\varepsilon$ is perhaps unsurprising as $\calF$ is effectively a parametric family by discretizing the support of $H$; the key challenge is to obtain a logarithmic dependence on $\frac{t_{\max}}{t_{\min}}$ and $\frac{x_{\max}}{x_{\min}}$. This is precisely where existing work based on polynomial approximation or Taylor series fail to capture, as detailed in the following sections.

\subsubsection{Generalized moment matching}

In the literature for normal location or scale mixtures, the prevalent approach for establishing metric entropy bounds is through the idea of \emph{moment matching}. Specifically, it is shown that if $H$ and $H'$ have the same first $L$ moments, then the difference $\| \bE_{t\sim H}[t\exp(-t^2x^2/2)] - \bE_{t\sim H'}[t\exp(-t^2x^2/2)] \|_\infty$ decays exponentially in $L$. Since Carath\'{e}odory's theorem shows that matching first $L$ moments can be realized by a discrete distribution with at most $O(L)$ atoms, covering such discrete distributions typically leads to a metric entropy bound of $\widetilde{O}(L)$. This idea has been used for approximating normal scale mixtures in \citep{ghosal2001entropies, ghosal2007posterior,ignatiadis2025empirical}; an improved bound is also obtained via \emph{local moment matching} in \citep{jiang2009general} for normal location mixtures, with extensions to Poisson mixtures in \citep{shen2022empirical}. 

However, such moment matching approaches (even the localized version) in normal scale mixtures suffer from a polynomial dependence on the ratio $\frac{t_{\max}}{t_{\min}}$. The underlying mathematical reason is that, such moment matching arguments rely on the following polynomial approximation
\begin{align*}
\sup_{x\in [x_{\min},x_{\max}], t\in [t_{\min},t_{\max}]} \Big| te^{-\frac{t^2x^2}{2}} - \sum_{k=1}^L g_k(x) t^k \Big| \le \varepsilon, 
\end{align*}
and this polynomial is usually chosen to be a truncation of Taylor series. Based on this approximation, it is clear that $\bE_{t\sim H}[t\exp(-t^2x^2/2)]$ approximately depends only on $\bE_{t\sim H}[t^k], k=1,\dots,L$. However, even for a fixed $x$, approximation theory tells that the degree $L$ must have a polynomial dependence on $(t_{\min}, t_{\max})$. For example, \citep{aggarwal2022optimal} shows that to approximate $e^{-t}$ with a uniform approximation error $\varepsilon$ on $t\in [0,B]$, the degree of the polynomial must be at least $\widetilde{\Omega}(\sqrt{B})$, exhibiting a polynomial dependence on $B$. 

To address this issue, we make a simple yet important observation that we may apply a generalized moment matching with suitably chosen basis functions: 
\begin{align}\label{eq:moment-matching-general}
\sup_{x\in [x_{\min},x_{\max}], t\in [t_{\min},t_{\max}]} \Big| t e^{-\frac{t^2x^2}{2}} - \sum_{k=1}^L a_k(t) g_k(x) \Big| \le \varepsilon. 
\end{align}
Here by choosing $a_k(t)$ beyond polynomials, we may find a better approximating basis of $t\exp(-t^2x^2/2)$ to achieve a smaller approximation error. The following lemma is an easy metric entropy bound if we can construct bounded functions $\{a_k(t), g_k(x)\}_{k=1}^L$ such that \eqref{eq:moment-matching-general} holds. 

\begin{lemma}\label{lemma:generalized-moment-matching}
Suppose \eqref{eq:moment-matching-general} holds with $|a_k(t)| + t|a_k'(t)|\le A$ for all $t\in [t_{\min},t_{\max}]$ and $|g_k(x)|\le G$ for all $x\in [x_{\min}, x_{\max}]$. Then $\log N(2\varepsilon, \calF, L_\infty([x_{\min},x_{\max}])) \le CL\log\frac{AGL}{\varepsilon}$. 
\end{lemma}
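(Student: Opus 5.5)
The approach is to reduce the infinite-dimensional class $\calF$ to a finite-dimensional family indexed by generalized moments, and then cover that family by a grid. By \eqref{eq:moment-matching-general}, for every $H\in\calP([t_{\min},t_{\max}])$ the function $f_H(x):=\bE_{t\sim H}[t e^{-t^2x^2/2}]$ satisfies
\begin{align*}
\sup_{x\in[x_{\min},x_{\max}]}\Big|f_H(x)-\sum_{k=1}^L m_k(H)\, g_k(x)\Big|\le \varepsilon,\qquad m_k(H):=\bE_{t\sim H}[a_k(t)].
\end{align*}
It therefore suffices to cover the set of moment vectors $\{(m_1(H),\dots,m_L(H))\}\subseteq[-A,A]^L$. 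If two distributions $H,H'$ have moment vectors with $\|m(H)-m(H')\|_\infty\le \eta$, then
\begin{align*}
\|f_H-f_{H'}\|_\infty\le 2\varepsilon + LG\eta .
\end{align*}
This alone only gives a $4\varepsilon$-cover, so I would tighten the argument by discretizing the approximant itself rather than going through two copies of the approximation error.

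Concretely, I would take a grid $\mathcal{M}$ of $[-A,A]^L$ with spacing $\eta=\varepsilon/(LG)$. For each grid point $v$ whose cell meets the moment set, I would place the center $\sum_k v_k g_k$. Then every $f_H$ lies within
\begin{align*}
\varepsilon + LG\eta \le 2\varepsilon
\end{align*}
of some center. The centers need not belong to $\calF$; if a proper cover is required, I would pick a representative $f_{H_v}$ and accept the loss of a constant factor in $\varepsilon$, which the $C$ in the bound absorbs after rescaling. The number of centers is at most $(2A/\eta+1)^L=(2ALG/\varepsilon+1)^L$, so
\begin{align*}
\log N(2\varepsilon,\calF,L_\infty)\le L\log\frac{3AGL}{\varepsilon},
\end{align*}
which is the claimed bound.

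The derivative condition $t|a_k'(t)|\le A$ plays no role in the sup-norm step above. I expect it is included to handle a different version of the argument: replacing $H$ by a discrete distribution supported on a logarithmic grid in $t$. Alternatively, it may serve the bracketing or Hellinger version needed later, where $t$-perturbations $t\mapsto t(1+\rho)$ change $a_k$ by at most $A\rho$. If the intended proof requires the cover to consist of mixtures with finitely many atoms, I would proceed in two steps. First, by Carath\'eodory's theorem, replace $H$ with a discrete $H'$ with at most $L+1$ atoms and the same moments $m_k$. Second, round the atoms to a multiplicative grid of ratio $1+\rho$, with $\rho=\varepsilon/(LAG)$, using $|a_k(t(1+\rho))-a_k(t)|\le A\rho$, which follows from the bound on $t a_k'$. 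Then round the weights to a grid of size $\varepsilon/(LAG)$. Counting gives $(\log(t_{\max}/t_{\min})/\rho)^{L+1}(LAG/\varepsilon)^{L+1}$ configurations, which yields an extra $\log\log(t_{\max}/t_{\min})$ term that is harmless in later use.

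The main obstacle is bookkeeping. The constants must come out as exactly $2\varepsilon$, and if the atom-based route is used, the $\log(t_{\max}/t_{\min})$ term from the multiplicative grid must be absorbed into $\log(AGL/\varepsilon)$. The moment-vector grid argument avoids both issues, so I would present that as the main proof.
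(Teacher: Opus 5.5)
Your main argument is correct, and it takes a different and simpler route than the paper's. The paper first uses Carath\'eodory's theorem to replace $H$ by an $O(L)$-atom distribution with the same generalized moments $\bE_{t\sim H}[a_k(t)]$. It then rounds the atoms to a geometric grid in $t$ and the weights to a net of the simplex, at scale $\delta=\varepsilon/(2AGL)$. The hypothesis $t|a_k'(t)|\le A$ enters exactly there, through $|a_k(t)-a_k(t')|\le A|\log t-\log t'|$. So your fallback two-step route is essentially the paper's proof, and your guess about the role of the derivative condition is right.

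Gridding the moment vector in $[-A,A]^L$ directly needs only $|a_k|\le A$ and $|g_k|\le G$, and it gives the clean count $(2AGL/\varepsilon+1)^L$. The paper's geometric grid, by contrast, has about $\log(t_{\max}/t_{\min})/\delta$ points. Its count therefore carries an extra $O(L\log\log(t_{\max}/t_{\min}))$ term that the stated $\exp(O(L\log\frac{AGL}{\varepsilon}))$ omits. This is harmless in \Cref{lemma:key-covering}, where $A$ already contains $(\log(t_{\max}/t_{\min}))^{2L}$, but your argument proves the lemma exactly as written.

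What the paper's route buys is that its centers are genuine mixtures $f_{H''}$ with $H''$ supported in $[t_{\min},t_{\max}]$. That is the form \Cref{lemma:step-I} needs when it extends the cover outside $[x_{\min},x_{\max}]$. Your representative trick recovers this, since any $H_v\in\calP([t_{\min},t_{\max}])$ whose moment vector lies in the cell will do, at radius $3\varepsilon$ instead of $2\varepsilon$. The paper's own triangle inequality gives the same $3\varepsilon$ if its centers are taken to be $f_{H''}$ rather than the approximants $\sum_k\bE_{t\sim H''}[a_k(t)]g_k$. Note that your ``rescaling'' must mean invoking \eqref{eq:moment-matching-general} at a slightly smaller accuracy. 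This is cheap in the application, because $L$ depends only logarithmically on that accuracy.
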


To choose the basis, we use the natural idea of writing $u=\log x+ \log t$ and try to approximate $\exp(-t^2x^2/2) = \exp(-\frac{1}{2}e^{2u}) \approx P_L(u)$, where $P_L$ is a polynomial of degree at most $L$. Expanding $P_L(u)$ into a bivariate polynomial in $(\log x, \log t)$, we obtain the basis functions $(a_k(t), g_k(x))$ in \eqref{eq:moment-matching-general} in the form of $t(\log t)^i$ and $(\log x)^j$. Therefore, it remains to find a uniform polynomial approximation for $\exp(-\frac{1}{2}e^{2u})$ on $u\in [\log(x_{\min}t_{\min}), \log(x_{\max}t_{\max})]$, or equivalently, for $\exp(-Ke^{\lambda u})$ on $u\in [-1,1]$ after translation and scaling, with $K=\frac{1}{2}x_{\min}x_{\max}t_{\min}t_{\max}>0$ and $\lambda = \log\frac{x_{\max}t_{\max}}{x_{\min}t_{\min}}$.

\subsubsection{Chebyshev approximation}
In this section we solve the polynomial approximation problem of approximating $\exp(-Ke^{\lambda u})$ on $[-1,1]$. As commonly used in the moment matching literature, a first natural idea is to apply the Taylor approximating polynomial of $\exp(-Ke^{\lambda u})$ at $u=0$. However, this leads to a poor approximation: after some algebra, we can show that the $n$-th Taylor coefficient is $a_n = \frac{\lambda^n}{n!}e^{-K}B_n(-K)$, where $B_n(x)=\sum_{k=0}^n S(n,k)x^k$ is the Bell/Touchard polynomial and $S(n,k)$ is the Stirling number of the second kind. Since $|B_n(-K)|$ grows at a speed faster than $K^n$, we see that $|a_n|\to 0$ only if $n\gg \lambda K$. However, since $K$ is polynomial in $(x_{\min}, x_{\max}, t_{\min}, t_{\max})$, this means that the degree of the Taylor approximating polynomial must also be a polynomial in $(x_{\min}, x_{\max}, t_{\min}, t_{\max})$. 
To improve over this dependence, we critically make use of the \emph{Chebyshev approximation} to achieve an approximation error \emph{independent of $K$}, as summarized in the following lemma. 

\begin{lemma}\label{lem:chebyshev_approximation}
Let $h(x)=\exp(-K e^{\lambda x})$ on $[-1,1]$, with $K>0$ and $\lambda>0$.
Let $P_L$ be the degree-$L$ Chebyshev polynomial of $h$. Then
$\|h-P_L\|_\infty\le \varepsilon$ for $
L = O(\log(\frac{1}{\varepsilon})+\lambda \log(\frac{\lambda}{\varepsilon})). $
% Furthermore, the coefficients of $P_r$ are bounded in magnitude by $2^{r+2}$. 
\end{lemma}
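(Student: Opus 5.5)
The plan is to use the classical Bernstein-ellipse bound for Chebyshev truncations. If $h$ is analytic inside the Bernstein ellipse $E_\rho=\{(w+w^{-1})/2: |w|<\rho\}$ with $\rho>1$, and $|h|\le M$ there, then the Chebyshev coefficients satisfy $|c_k|\le 2M\rho^{-k}$. Consequently the degree-$L$ Chebyshev truncation obeys
\begin{align*}
\|h-P_L\|_\infty\le \sum_{k>L}|c_k|\le \frac{2M\rho^{-L}}{\rho-1}.
\end{align*}
Since $h(z)=\exp(-Ke^{\lambda z})$ is entire, the only task is to choose $\rho$ so that $M$ is bounded independently of $K$. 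This is exactly where the Bernstein ellipse beats the Taylor disk: $K$ should not enter at all.

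The key computation is the modulus of $h$ in the complex plane. Writing $z=a+ib$, we have
\begin{align*}
|h(z)|=\exp\bigl(-K\,\mathrm{Re}\,e^{\lambda z}\bigr)=\exp\bigl(-Ke^{\lambda a}\cos(\lambda b)\bigr).
\end{align*}
Whenever $|b|\le \pi/(2\lambda)$ we have $\cos(\lambda b)\ge 0$, hence $|h(z)|\le 1$, for every $K>0$ and every real part $a$. The ellipse $E_\rho$ has semi-minor axis $(\rho-\rho^{-1})/2$. So I will choose $\rho$ through $(\rho-\rho^{-1})/2=\pi/(2\lambda)$, which forces the closed ellipse to lie in the strip $|\mathrm{Im}\,z|\le \pi/(2\lambda)$. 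This gives $M=1$ uniformly in $K$.

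It remains to translate $\rho$ into a degree, and I will split into two cases.
\begin{itemize}
\item If $\lambda\ge 1$, then $\rho-1\asymp 1/\lambda$ and $\log\rho\asymp 1/\lambda$. The error $2\rho^{-L}/(\rho-1)\le\varepsilon$ then holds once $L\gtrsim \lambda\log(\lambda/\varepsilon)$.
\item If $\lambda<1$, then $\rho\ge 1+\pi/2$ is bounded away from $1$ (indeed $\rho\asymp 1/\lambda$). Here $L\gtrsim\log(1/\varepsilon)$ suffices.
\end{itemize}
Combining the two cases yields $L=O(\log(1/\varepsilon)+\lambda\log(\lambda/\varepsilon))$.

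The main obstacle, conceptually, is the choice of the domain of analyticity. Any disk or ellipse that extends vertically beyond $\pi/(2\lambda)$ contains points where $\cos(\lambda b)<0$. At such points, once the real part is near $1$, $|h|$ grows like $\exp(Ke^{\lambda})$, which reintroduces the polynomial dependence on $K$. The one thing to verify carefully is that the flat ellipse is controlled only by its imaginary extent, so that its real extent beyond $[-1,1]$ (up to $(\rho+\rho^{-1})/2$) is harmless. This holds because the bound $|h|\le 1$ above does not depend on $a$.

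The remaining steps are routine: the coefficient bound follows from the contour-integral representation of $c_k$ on $|w|=\rho'$ with $\rho'\uparrow\rho$, and the tail is a geometric sum.
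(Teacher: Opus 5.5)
Your proposal is correct and is essentially the paper's proof. You apply the Bernstein ellipse bound $\|h-P_L\|_\infty\le \frac{2M}{\rho-1}\rho^{-L}$ with $\rho$ chosen so that the ellipse lies in the strip $|\mathrm{Im}\,z|\le \frac{\pi}{2\lambda}$, where $|h(z)|=\exp(-Ke^{\lambda a}\cos(\lambda b))\le 1$ independently of $K$. The differences are cosmetic: the paper takes $\rho=\min\{2,1+\frac{\pi}{2\lambda}\}$ instead of solving $\frac12(\rho-\rho^{-1})=\frac{\pi}{2\lambda}$ and splitting into $\lambda\ge 1$ versus $\lambda<1$, and your set $\{(w+w^{-1})/2: |w|<\rho\}$ should be restricted to $1\le |w|<\rho$ to describe the ellipse interior.
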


\begin{proof}
The proof bounds the Chebyshev approximation error using the Bernstein ellipse theorem.

\begin{lemma}[Bernstein Ellipse Theorem, {\citep[Theorem 8.2]{trefethen2019approximation}}]
\label{lem:bernstein}
Let $h$ be analytic in the open Bernstein ellipse
$E_\rho=\{z\in\bC: z=\tfrac{1}{2}(\rho e^{i\theta}+\rho^{-1}e^{-i\theta}),
\theta\in[0,2\pi]\}$ for some $\rho>1$, and suppose $|h(z)|\le M$ on $E_\rho$.
Let $P_L$ be the degree-$L$ Chebyshev truncation of $h$ on $[-1,1]$. Then $
\|h-P_L\|_\infty \le \frac{2M}{\rho-1}\rho^{-L}.$
\end{lemma}

To conclude from \Cref{lem:bernstein}, note that the Bernstein ellipse $E_\rho$ is contained in the strip
$|\Im z|\le \frac{1}{2}(\rho-\rho^{-1})\le \rho-1$ for $\rho \in [1,2]$. In addition, $|h(z)| = \exp(-Ke^{\lambda x}\cos(\lambda y))$ for $z=x+iy$. Therefore, for $\rho = \min\{2, 1 + \frac{\pi}{2\lambda}\}$, we have $\cos(\lambda y) \ge 0$ for $|y|\le \rho -1$ and thus $|h(z)|\le 1$ on $E_\rho$. Now \Cref{lem:chebyshev_approximation} directly follows from \Cref{lem:bernstein}. 
\end{proof}

Importantly, the final polynomial degree $L$ in \Cref{lem:chebyshev_approximation} is only polynomial in $\lambda$, and thus poly-logarithmic in $\frac{x_{\max}t_{\max}}{x_{\min}t_{\min}}$; this is the target feature of \Cref{lemma:key-covering}. 
It is interesting to note why Chebyshev approximation performs better than Taylor approximation here. In fact, Taylor approximation gives a convergence rate $\rho^{-n}$ when $|h|$ is bounded on the disc $|z|=\rho$, whereas Chebyshev approximation operates on a different geometry $E_\rho$ which more tightly envelopes the desired interval. 
Since $|h(z)|$ could grow rapidly along the imaginary axis, here the Bernstein ellipse is much more suitable than the disc.

\section{Bounding modulus of continuity}\label{sec:func_ineq}

In this section, we establish upper bounds of the Hellinger modulus of continuity in \eqref{eq:modulus-of-continuity}, with various choices of $G_n$. 
Specifically, we prove the upper bound in \Cref{lemma:functional_inequality} for the subset-of-signals problem, and upper bounds in \Cref{cor:examples} for explicit examples of $G_n$.

\subsection{Modulus of Continuity for SoS Priors: Proof of Lemma \ref{lemma:functional_inequality}}
%\label{sec:func_ineq}

The proof of \Cref{lemma:functional_inequality} relies on the following variational lower bound of squared Hellinger distance: 
\begin{align}\label{eq:Hellinger-variational}
    H^2(P,Q) = \int \frac{(P-Q)^2}{(\sqrt{P} + \sqrt{Q})^2} \geq \frac{1}{2}\int \frac{(P-Q)^2}{P+Q} = \frac{1}{2}\sup_{T}\frac{(\bE_P[T] - \bE_Q[T])^2}{\bE_P[T^2] + \bE_Q[T^2]},
\end{align}
where the supremum is over all test functions $T: \calX \to \bR$, and the last step is Cauchy--Schwarz. In the definition of Hellinger modulus of continuity, suppose the choices $P = f_{\mu, G}$ and $Q = f_{0, G}$ satisfy $H^2(P,Q)\le t$. Now choosing test function $T_{\Delta}(x) = 1_{[\mu, \mu + \Delta]}(x)$, the variational form gives
    \begin{align}\label{eq:lb}
\frac{\bqth{\bE_{G}\bpth{\Phi\pth{\frac{\Delta}{\sigma}} - \Phi(0)  - \Phi(\frac{\Delta + \mu}{\sigma}) +  \Phi(\frac{\mu}{\sigma})}}^2}{\bE_{G}\bpth{\Phi(\frac{\Delta}{\sigma}) - \Phi(0)+\Phi(\frac{\Delta + \mu}{\sigma}) - \Phi(\frac{\mu}{\sigma})}} \le 2t. 
\end{align}
Here $ \Phi(x) = \int_{-\infty}^x \varphi(t) dt$ denote the standard Gaussian CDF.
Taking $\Delta=\infty$, and noting that the denominator always lies in $[0,1]$, we obtain
\begin{align*}
\sqrt{2t} \ge \bE_G\bpth{\Phi(\frac{\mu}{\sigma}) - \Phi(0)} \ge \Phi'(1) \cdot \bE_G\bqth{\frac{ \mu\indc{\sigma\ge \mu} }{\sigma}}
\end{align*}
due to $\Phi(x)-\Phi(0)\ge \Phi'(1)x$ for $x\in [0,1]$. This shows that for a universal $c_1>0$, 
\begin{align}\label{eq:ineq_1}
\bE_G\bqth{\frac{\indc{\sigma\ge \mu}}{\sigma}} \le \frac{c_1\sqrt{t}}{\mu}. 
\end{align}

Next we take $\Delta=1$ in \eqref{eq:lb}. For the denominator, we have
\begin{align}\label{eq:denominator}
\bE_{G}\bpth{\Phi(\frac{1}{\sigma}) - \Phi(0)+\Phi(\frac{1 + \mu}{\sigma}) - \Phi(\frac{\mu}{\sigma})} &\le 2\bE_{G}\bpth{\Phi(\frac{1}{\sigma}) - \Phi(0)} \nonumber\\
&\le 2\pth{\frac{1}{2}G([0,1]) + \Phi'(0)\cdot \bE_G\qth{\frac{\indc{\sigma\ge 1}}{\sigma}}} \nonumber \\
&\le G([0,1]) + \bE_G\bqth{\frac{\indc{\sigma\ge 1}}{\sigma}}. 
\end{align}
For the numerator, we distinguish into two cases. 

\paragraph{Case I: $\mu\ge 1$.} Let
\begin{align*}
\Xi_{\mu,\sigma} := \Phi(\frac{1}{\sigma}) - \Phi(0)  - \Phi(\frac{1 + \mu}{\sigma}) +  \Phi(\frac{\mu}{\sigma}). 
\end{align*}
When $\mu \ge 1$, simple algebra yields that for a universal constant $c_2>0$, 
\begin{align}
\Xi_{\mu,\sigma} \ge c_2\begin{cases}
    1 & \text{if } \sigma < 1, \\
    \frac{1}{\sigma} & \text{if } 1\le \sigma < \mu, \\
    0 & \text{if } \sigma \ge \mu. 
\end{cases}
\end{align}
Therefore,
\begin{align}
\bE_G[\Xi_{\mu,\sigma}] \ge c_2\bpth{G([0,1]) + \bE_G\bqth{\frac{\indc{1\le\sigma\le \mu}}{\sigma}}}, 
\end{align}
and
\begin{align*}
\bpth{G([0,1]) + \bE_G\bqth{\frac{\indc{1\le\sigma\le \mu}}{\sigma}}}^2 &\stepa{\le} c_3t \bpth{G([0,1]) + \bE_G\bqth{\frac{\indc{\sigma\ge 1}}{\sigma}}} \\
&\stepb{\le} c_3t\bpth{G([0,1]) + \bE_G\bqth{\frac{\indc{1\le\sigma\le \mu}}{\sigma}} + \frac{c_1\sqrt{t}}{\mu}}. 
\end{align*}
Here (a) follows from \eqref{eq:lb} and \eqref{eq:denominator}, and (b) uses \eqref{eq:ineq_1}. Solving this quadratic inequality gives
\begin{align*}
G([0,1]) + \bE_G\bqth{\frac{\indc{1\le\sigma\le \mu}}{\sigma}} \le c_4\bpth{t + \frac{t^{3/4}}{\mu^{1/2}}}. 
\end{align*}
As $G([0,1])\ge p \ge Ct$ for $C\ge 2c_4$, this inequality gives that $\mu = O(\frac{t^{3/2}}{p^2})$. 

\paragraph{Case II: $0\le \mu<1$.} In this case, one can verify the following lower bounds for $\Xi_{\mu,\sigma}$: 
\begin{align}
\Xi_{\mu,\sigma} \ge c_2\begin{cases}
    1 & \text{if } \sigma < \mu, \\
    \frac{\mu}{\sigma} & \text{if } \mu\le \sigma < 1, \\
    0 & \text{if } \sigma \ge 1. 
\end{cases}
\end{align}
Therefore, the numerator is lower bounded as
\begin{align}
\bE_G[\Xi_{\mu,\sigma}] \ge c_2\bpth{ G([0,\mu]) +  \bE_G\bqth{\frac{\mu\indc{\mu\le\sigma\le 1}}{\sigma}} } \ge c_2\mu \cdot G([0,1]). 
\end{align}
Consequently, by \eqref{eq:lb}, \eqref{eq:ineq_1}, and \eqref{eq:denominator}, we get
\begin{align*}
\mu^2 G([0,1])^2 \le c_3t\bpth{G([0,1]) + \bE_G\bqth{\frac{\indc{\sigma\ge 1}}{\sigma}}} \le c_3t\bpth{G([0,1]) + \frac{c_1\sqrt{t}}{\mu}}, 
\end{align*}
and further solve this quadratic inequality to find
\begin{align*}
G([0,1]) \le c_4\bpth{\frac{t}{\mu^2} + \frac{t^{3/4}}{\mu^{3/2}}}. 
\end{align*}
Finally, as $G([0,1])\ge p$, we obtain $\mu = O(\frac{t^{1/2}}{p^{1/2}} + \frac{t^{1/2}}{p^{2/3}}) = O(\frac{t^{1/2}}{p^{2/3}})$ as $p\le 1$. 

% \paragraph{Summary.} 
To reach the desired conclusion, note that when $p^{4/3}<t\le \frac{p}{C}$, we have either $\mu = O(\frac{t^{3/2}}{p^2})$ (Case I), or $\mu\le 1$ (assumption of Case II). Since the upper bound in Case I is larger, we conclude that $\mu = O(\frac{t^{3/2}}{p^2})$. Similarly, when $t\le p^{4/3}$, we have either $\mu = O(\frac{t^{3/2}}{p^2})$ (Case I), or $\mu=O(\frac{t^{1/2}}{p^{2/3}})$ (Case II). Here the upper bound in Case II dominates, so that $\mu=O(\frac{t^{1/2}}{p^{2/3}})$. 

\subsection{Modulus of Continuity for Selected Priors: Proof of \Cref{cor:examples}}
\label{sec:mod_cont}

To prove \Cref{cor:examples}, we invoke \Cref{thm:general} and establish upper bounds of the Hellinger modulus of continuity for various choices of $G_n$. 

\paragraph{Equal variance.} When $\sigma_i \equiv 1$, we have $G_n = \delta_{1}$. In this case, 
\begin{align*}
H^2(f_{\mu_1, G_n}, f_{\mu_2, G_n}) = 2-2\exp\pth{-\frac{(\mu_1-\mu_2)^2}{8}}, 
\end{align*}
so that $\omega_{H^2,G_n}(t) = O(\sqrt{t})$. Now plugging in $t = O(\frac{L}{n})$ gives the result. 

\paragraph{Quadratic variance.} Consider any $\mu > 0$ such that \eqref{eq:lb} holds for all $\Delta\ge 0$, with $G = G_n$. Next we choose $\Delta = \mu$ in \eqref{eq:lb} and proceed in the same way as \eqref{eq:denominator} to obtain
\begin{align*}
G_n([0,\mu])^2 \le c_1t\pth{ G_n([0,\mu]) + \mu \bE_{G_n}\bqth{\frac{\indc{\sigma\ge \mu}}{\sigma}} }. 
\end{align*}
For $t\le c_2$ with a small enough constant $c_2>0$, the above inequality shows that $\mu > n$ is impossible. For $1\le \mu\le n$, since $\bE_{G_n}[\frac{\indc{\sigma\ge \mu}}{\sigma}] \le \bE_{G_n}[\frac{1}{\sigma}] = \frac{1}{n}\sum_{i=1}^n \frac{1}{i} = O(\frac{\log n}{n})$ and $G_n([0,\mu])\asymp \frac{\mu}{n}$, we can solve that $\mu = O(nt\log n)$. Combining with the remaining case $\mu\le 1$ leads to the final upper bound
\begin{align*}
\omega_{H^2,G_n}(t) = O\pth{ nt\log n \vee 1 }, \quad \text{if } t \le c_2. 
\end{align*}
Plugging in $t = O(\frac{L}{n})$ proves the target result. 

\paragraph{Two variances.} Again, consider some $\mu > 0$ such that \eqref{eq:lb} holds for every $\Delta\ge 0$, with $G = G_n = \frac{m}{n}\delta_1 + (1-\frac{m}{n})\delta_{\sigma}$. We consider two choices of $\Delta$. First, choosing $\Delta = \sigma$ and upper bounding the denominator by $2$ in \eqref{eq:lb}, we get
\begin{align*}
   \frac{n-m}{n}\pth{\Phi(1) - \Phi(0) - \Phi(1+\frac{\mu}{\sigma}) + \Phi(\frac{\mu}{\sigma})} \le c_1\sqrt{t}. 
\end{align*}
Clearly, for $t\le c_2$ with a small enough constant $c_2>0$, this inequality implies that $\mu \le \sigma$. In this case, the left-hand side further scales as $\Omega(\frac{\mu}{\sigma})$, so that we get
\begin{align*}
\omega_{H^2,G_n}(t) = O(\sigma \sqrt{t}), \quad \text{if } t \le c_2. 
\end{align*}
For $t=O(\frac{L}{n})$, this gives the general $O(\sigma\sqrt{\frac{L}{n}})$ upper bound.

The second choice of $\Delta$ is $\Delta = 1$. If we upper bound the denominator of \eqref{eq:lb} by $2$, then
\begin{align*}
   \frac{m}{n}\pth{\Phi(1) - \Phi(0) - \Phi(1+\mu) + \Phi(\mu)} \le c_1\sqrt{t}. 
\end{align*}
Therefore, if $\frac{n\sqrt{t}}{m}\le c_2$ for a small enough constant $c_2>0$, we solve that $\mu = O(\frac{n\sqrt{t}}{m})$. For $t=O(\frac{L}{n})$, this proves the $O(\frac{\sqrt{nL}}{m})$ upper bound for $m\ge \sqrt{nL}$. Instead, if we upper bound the denominator of \eqref{eq:lb} via \eqref{eq:denominator}, we obtain
\begin{align*}
   \frac{m}{n}\pth{\Phi(1) - \Phi(0) - \Phi(1+\mu) + \Phi(\mu)} \le c_1\sqrt{t\pth{\frac{m}{n} + \frac{1}{\sigma}}}. 
\end{align*}
For $t=O(\frac{L}{n})$ and $m\ge C(\sqrt{\frac{nL}{\sigma}} + L)$, it holds that
\begin{align*}
\frac{m}{n}\ge 10c_1 \sqrt{t\pth{\frac{m}{n} + \frac{1}{\sigma}}}. 
\end{align*}
Solving the inequality gives the target upper bound $\mu = O(1)$. 

\paragraph{$\alpha$-mixture distributions.} We use the upper bound in the two variances example. For $\alpha\in (0,1)$, we use the upper bound $O(\sigma\sqrt{\frac{L}{n}}) = \widetilde{O}(n^{\alpha-1/2}\sqrt{L})$. For $\alpha\ge 1$, we note that for $m=cL$ with a large enough constant $c>0$, it holds that
\begin{align*}
C\bpth{\sqrt{\frac{nL}{n^{\alpha}}} + L} \le m < \sqrt{nL}. 
\end{align*}
Therefore, we can apply the $O(1)$ upper bound in this case.

\section{Numerical experiments and algorithms}\label{sec:numerics}

In this section, we summarize important properties of the NPMLE that are well-known in the empirical Bayes literature \citep{lindsay1983geometry,lindsay1995mixture}, specialized to normal scale mixtures. We also detail our computational approach for finding the NPMLE based on the Frank--Wolfe procedure adapted from \citep{han2025besting}, and numerically compare our approach to previous methods in the literature such as median and iterative truncation \citep{liang2020learning, pensia2022estimating}. 
\vspace{-0.5em}
\subsection{Basic properties of the NPMLE}
The main difficulty in solving the MLE \eqref{eq:EB_MLE} is to solve for the NPMLE $\widehat{G}$ once $\widehat{\mu}$ is fixed. This section describes some basic properties of $\widehat{G}$, and WLOG we assume that $\widehat{\mu}=0$. Since the log-likelihood is concave in $G$, it is well-known \citep{lindsay1983geometry} that the KKT condition for $\widehat{G}$ is
\begin{align}\label{eq:KKT}
D_{\widehat{G}}(\sigma) := D_{\widehat{G}}(\sigma; X^n) := \frac{1}{n}\sum_{i=1}^n \frac{\frac{1}{\sigma}\varphi(\frac{X_i}{\sigma})}{f_{0,\widehat{G}}(X_i)} \leq 1, \quad \forall \sigma \ge 0,
\end{align}
and equality holds if and only if $\sigma \in \supp(\widehat{G})$. Based on this, we have the following structural result. 
% for $\widehat{G}$. 
% Characterizing optimality of $(\widehat{\mu},\widehat{G})$ in terms of $D_{\widehat{G}}(\sigma;\widehat{\mu})$ gives rise to key structural properties of the estimates $\widehat{G}^{\text{EB}}$. In particular, the following lemma shows that $\widehat{G}$ is a discrete measure supported on a finite set of atoms in the interval $[\overline{X}_{\text{min}}, \overline{X}_{\text{max}}]$, an important structural property of the NPMLE which is typical in the empirical Bayes literature for heteroskedastic Gaussian mixtures \citep{soloff2025multivariate} and makes the NPMLE computationally tractable in low dimensions \citep{koenker2014convex}.
\begin{lemma}\label{lemma: structural}
Suppose $X_i\neq 0$ for all $i\in [n]$. Then the NPMLE $\widehat{G}$ is a discrete distribution with $|\supp(\widehat{G})|\le |\{|X_1|,\dots,|X_n|\}|$, $\supp(\widehat{G})\subseteq [\min_{i\in [n]}|X_i|, \max_{i\in [n]}|X_i|]$, and unique. 
\end{lemma}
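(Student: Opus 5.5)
Since the problem is convex in $G$, I would work entirely from the KKT condition \eqref{eq:KKT}. Write $a_i = X_i^2 > 0$. Then
\[
D_{\widehat G}(\sigma) = \frac{1}{n}\sum_{i=1}^n w_i\,\frac{1}{\sigma}\exp\Big(-\frac{a_i}{2\sigma^2}\Big),
\qquad w_i = \frac{1}{\sqrt{2\pi}\, f_{0,\widehat G}(X_i)} > 0 .
\]
Only the distinct values of $|X_i|$ matter. Group the terms into distinct values $b_1<\dots<b_k$, where $k = |\{|X_1|,\dots,|X_n|\}|$, with positive aggregated weights $v_j$.

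\textbf{Support location.} Each term $\sigma^{-1}\exp(-b_j^2/(2\sigma^2))$ has derivative in $\sigma$ equal to $\sigma^{-2}\exp(\cdot)\,(b_j^2/\sigma^2 - 1)$. For $\sigma > \max|X_i|$ every term is strictly decreasing, so $D_{\widehat G}$ is strictly decreasing there. For $\sigma < \min|X_i|$ every term is strictly increasing, so $D_{\widehat G}$ is strictly increasing there. Since support points are exactly where $D_{\widehat G}=1=\max D_{\widehat G}$, no support point can lie outside $[\min|X_i|, \max|X_i|]$: at such a point $D_{\widehat G}$ would exceed $1$ slightly inward (or outward, as appropriate). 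If the constraint $[\sigmin,\sigmax]$ is active, I would intersect with that interval; the same monotonicity argument applies.

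\textbf{Counting atoms.} Substitute $s = 1/(2\sigma^2)$. Then $D = 1$ becomes $\sum_j v_j\sqrt{2s}\,e^{-b_j^2 s} = n$, which can be rewritten as
\[
\sum_{j} v_j e^{-b_j^2 s} - \frac{n}{\sqrt{2s}} = 0 .
\]
Support points are the touching points of $D$ with its maximum value $1$, so each is a zero of $\psi(s) = \sum_j v_j e^{-b_j^2 s} - n(2s)^{-1/2}$ of even multiplicity, hence of multiplicity at least $2$. The function $\psi$ is a combination of $k+1$ functions $e^{-c s}$ and $s^{-1/2}$, where $s^{-1/2} = \int_0^\infty e^{-cs}\,c^{-1/2}\,\mathrm{d}c/\Gamma(1/2)$ is a Laplace transform. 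I would invoke the variation-diminishing (total positivity) property of the kernel $e^{-cs}$. The signed measure $\sum_j v_j\delta_{b_j^2} - n'\,c^{-1/2}\mathrm{d}c$ has at most $2k$ sign changes, since the negative part is spread over the whole half-line. This gives at most $2k$ zeros of $\psi$ counting multiplicity, and therefore at most $k$ support points. This step is the main obstacle. The sign-change count must be done carefully: the positive atoms interleave the negative density, giving $2k$ changes, and I need the extended Descartes rule for Laplace transforms that counts multiplicities.

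\textbf{Uniqueness.} The fitted values $f_{0,G}(X_i)$ are unique by strict concavity of $\log$ in the vector $(f_{0,G}(X_i))_i$. Any two NPMLEs then share the same $D$, hence the same candidate support set $S$ with $|S|\le k$. On $S$ the weights solve the linear system $\sum_{s\in S} G(\{s\})\,\varphi_s(|b_j|) = f(b_j)$ for $j\le k$. The matrix with entries $\sigma^{-1}\exp(-b_j^2/(2\sigma^2))$ is, after rescaling rows and columns, a generalized Vandermonde matrix $e^{-b_j^2 s}$, which is totally positive. It therefore has full column rank $|S|$, so the weights are unique and hence $\widehat G$ is unique.
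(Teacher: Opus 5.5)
Your proposal is correct. The support-location and uniqueness steps match the paper's. The paper also reads off the sign of $X_i^2/\sigma^2-1$, and it proves full column rank by showing that $t\mapsto\sum_j z_je^{-t/\sigma_j^2}$ cannot have $m$ zeros. That is the nonsingularity of every square submatrix of the exponential kernel with distinct nodes. You should say \emph{strictly} totally positive (up to reversing one index), since total positivity alone does not give full rank.

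The real difference is how you count atoms.
\begin{itemize}
\item The paper differentiates, so the constant $1$ drops out. In $x=1/\sigma^2$, the critical-point equation of $D_{\widehat G}$ is an exponential polynomial $\sum_j (d_j+d_{k+j}x)e^{c_jx}$ with $k$ distinct exponents and linear coefficients. \Cref{lemma:chebyshev} then gives at most $2k-1$ critical points. An extra critical point lies strictly between consecutive global maximizers, so $m$ maximizers force $2m-1\le 2k-1$.
\item You stay on the level set $D_{\widehat G}=1$ and absorb the constant by writing $(2s)^{-1/2}$ as a Laplace transform. You then count zeros of $\psi$ with multiplicity, using the variation-diminishing property of $e^{-cs}$ on a measure with sign pattern $-,+,-,\dots,+,-$ ($2k$ changes). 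Maximizers are zeros of even order.
\end{itemize}
Your route is more conceptual and bypasses the interlacing step. However, it relies on the multiplicity-counting Descartes rule for Laplace transforms of measures with a continuous part (Karlin's extended total positivity). The paper needs only a Rolle-type induction.

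You can avoid multiplicities, which you flagged as the main obstacle. Apply the distinct-zero version to $\psi+\epsilon$, the transform of $\nu+\epsilon\delta_0$. It has $2k+1$ sign changes, and for small $\epsilon>0$ it has two distinct zeros near each touching point, so $2m\le 2k+1$.

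Two minor points:
\begin{itemize}
\item The paper also records existence of the NPMLE (the likelihood is bounded since $X_i\neq0$, plus compactness). You take this for granted.
\item Your aside on an active constraint is wrong as stated. If $\sigmin>\max_i|X_i|$, the constrained NPMLE is $\delta_{\sigmin}$, which lies outside $[\min_i|X_i|,\max_i|X_i|]$. The lemma concerns the unconstrained KKT condition \eqref{eq:KKT}, so this does not affect your argument.
\end{itemize}
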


\subsection{Successive maximization algorithm for the joint MLE}\label{sec: successive max}

Motivated by the NPMLE properties in \Cref{lemma: structural}, especially the discrete nature of $\widehat{G}$, we use a fully-corrective Frank--Wolfe algorithm \citep{frank1956algorithm} (also known as the vertex-direction method in the mixture model literature) to compute $\widehat{G}$ with fixed $\widehat{\mu}$, in a similar manner to \citep{han2025besting}. Specifically, given an initialization $\widehat{G}_0$, for $t=1,2,\dots$, we repeat the following steps until convergence: (1) find a new atom $\sigma_{t} \in \argmax_{\sigma > 0} D_{\widehat{G}_{t-1}}(\sigma; X^n - \widehat{\mu})$ via a grid search,
and then (2) add $\sigma_t$ to the support of $\widehat{G}$ and optimize over the weights via a convex program: 
\begin{align*}
    \widehat{G}_t = \argmax_{G: \supp(G) \subseteq \supp(\widehat{G}_{t-1}) \cup \{\sigma_{t}\}} \frac{1}{n}\sum_{i=1}^n \log f_{\widehat{\mu},G}(X_i).
\end{align*}
% We note that in our grid search of (1), in addition to the restriction $\sigma_t\in [\sigmin, \sigmax]$, by \Cref{lemma: structural} we also use the data-dependent constraint $\sigma_t\in [\min_i |X_i - \widehat{\mu}|, \max_i |X_i - \widehat{\mu}|]$. 
We note that we do not enforce the constraints $\supp(\widehat{G})\subseteq [\sigmin, \sigmax]$; this is because by \Cref{lemma: structural}, the data $X^n$ have already constrained the support of $\widehat{G}$. 
Based on the Frank--Wolfe algorithm for fixed $\widehat{\mu}$, our final algorithm for joint optimization of $(\widehat{\mu}, \widehat{G})$ proceeds via successive maximization. Given an initialization $\widehat{\mu}_0$ (chosen as the median in our implementation), for $t = 1,2,\dots$, we iterate the following steps until convergence: (1) run the Frank--Wolfe algorithm with $\widehat{\mu}$ fixed at $\widehat{\mu}_{t-1}$ to obtain $\widehat{G}_t$; (2) perform a grid search to find $\widehat{\mu}_t$ that maximizes the total likelihood for fixed $\widehat{G}_t$. Since the likelihood improves at each iteration, we terminate when the increase in likelihood falls below a prescribed threshold. Although the overall algorithm is susceptible to local maxima, our experiments show that it converges well in practice. 

\subsection{Experiments}
\begin{figure}[ht]
    \centering
    \begin{subfigure}{0.48\linewidth}
        \centering
        \includegraphics[width=\linewidth]{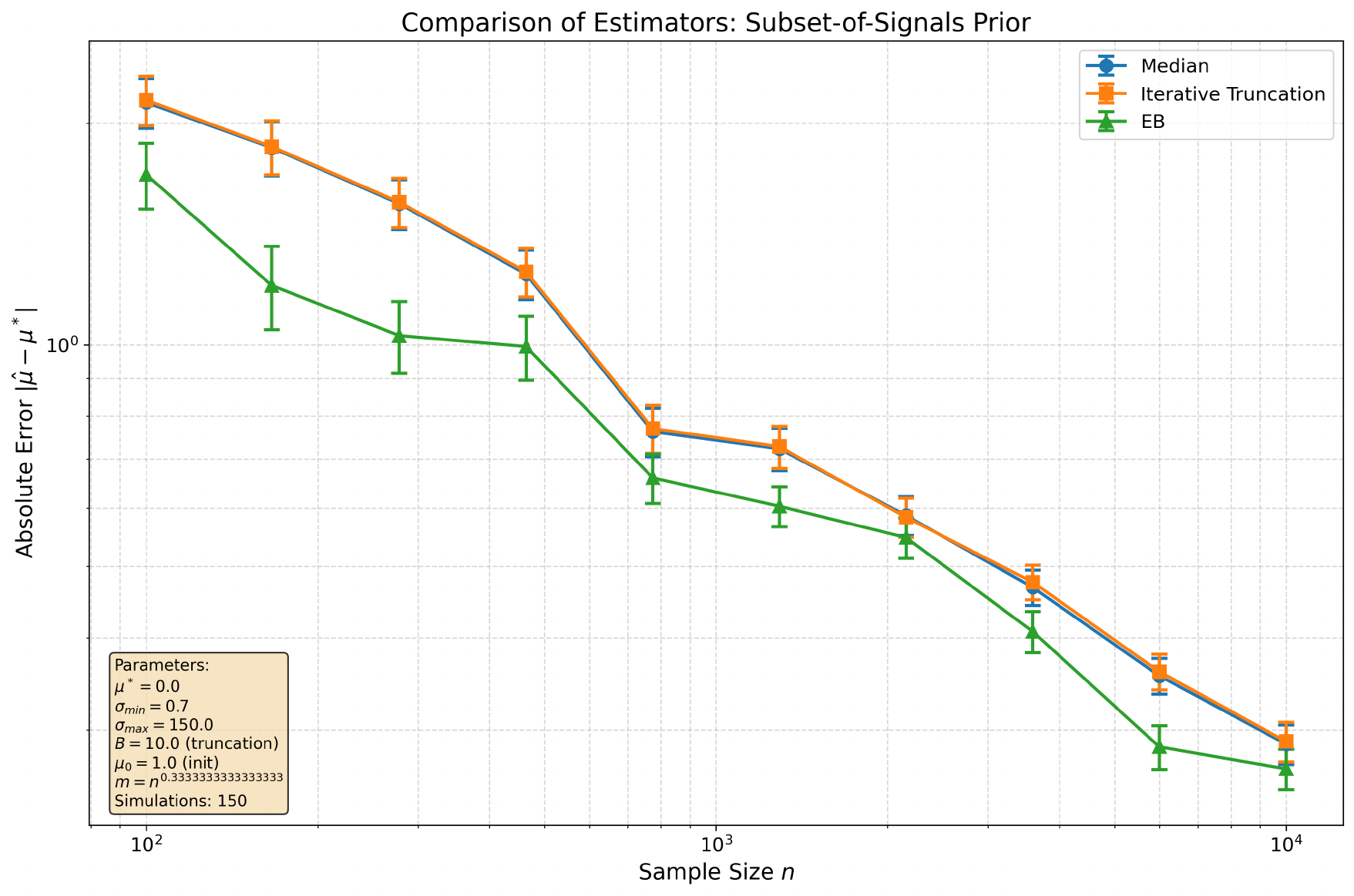}
        \caption{$m=n^{1/3}$}
    \end{subfigure}
    \hfill
    \begin{subfigure}{0.48\linewidth}
        \centering
        \includegraphics[width=\linewidth]{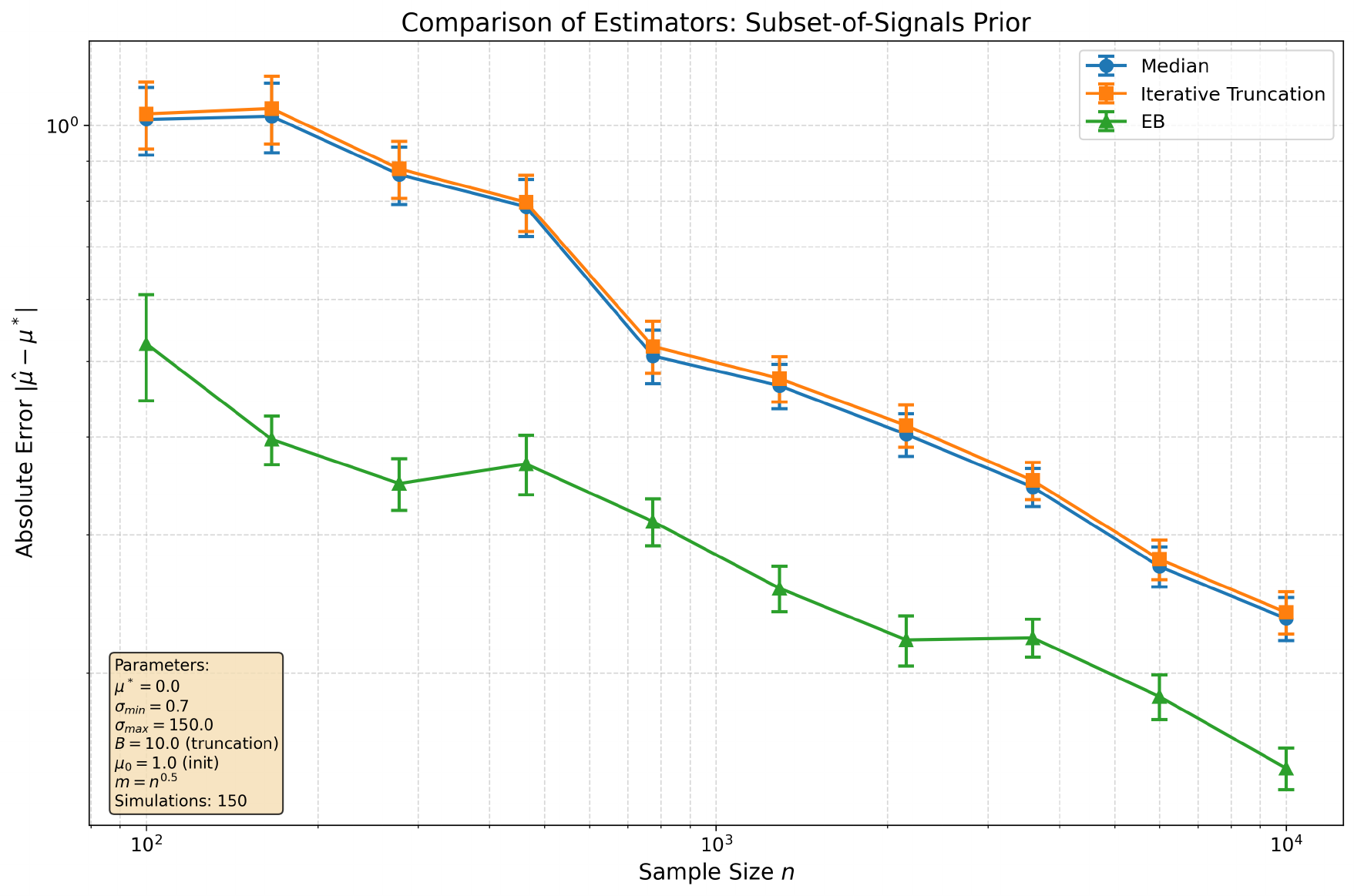}
        \caption{$m=\sqrt{n}$}
    \end{subfigure}
    \caption{Average absolute estimation errors for $\widehat{\mu}^{\EB}$, sample median, and iterative truncation over $N=150$ simulations, under the subset-of-signals prior $G_n=\frac{m}{n}\Unif([0.7,1]) + \frac{n-m}{n}\Unif([1,150])$, with different choices of $(m,n)$.}
    \label{fig:sos_prior}
\end{figure}

\begin{figure}[ht]
    \centering
    \begin{subfigure}{0.48\linewidth}
        \centering
        \includegraphics[width=\linewidth]{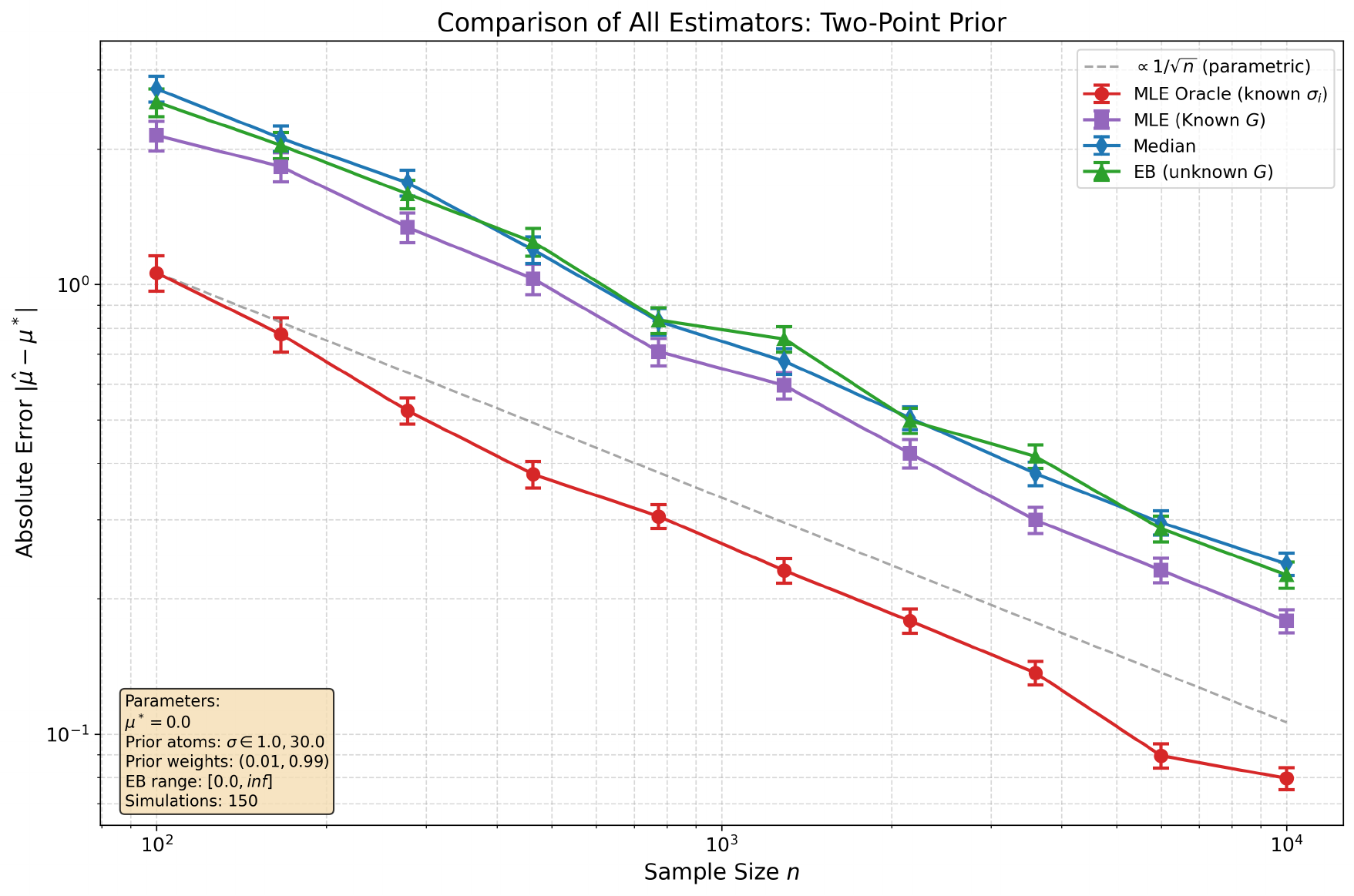}
        \caption{$G_n = \frac{1}{100}\delta_1 + \frac{99}{100}\delta_{30}$}
    \end{subfigure}
    \hfill
    \begin{subfigure}{0.48\linewidth}
        \centering
        \includegraphics[width=\linewidth]{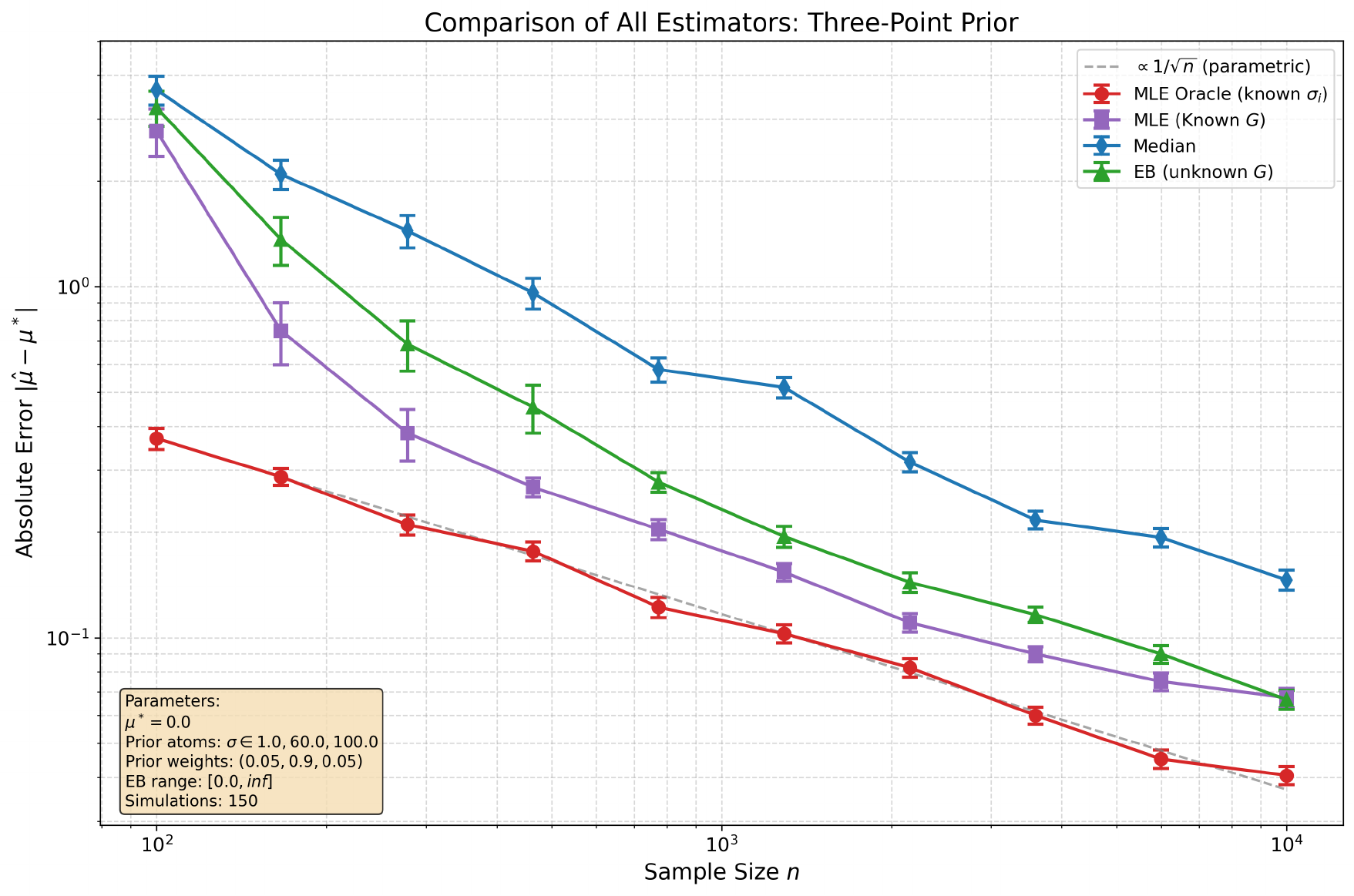}
        \caption{$G_n = \frac{1}{20}\delta_{1} + \frac{9}{10}\delta_{60} + \frac{1}{20}\delta_{100}$}
    \end{subfigure}
    \caption{Average absolute estimation errors for $\widehat{\mu}^{\EB}$, sample median, and two oracle MLEs over $N=150$ simulations, under two-point and three-point scale mixture priors.}
    \label{fig:finite-point}
\end{figure}

In this section, we illustrate the empirical  performance of our estimator $\widehat{\mu}^{\EB}$ on a variety of empirical distributions (priors) $G_n$ of $(\sigma_1,\dots,\sigma_n)$, including the subset-of-signals prior $G_n=\frac{m}{n}\Unif([\sigmin,1])+\frac{n-m}{n}\Unif([1,\sigmax])$, two-point and three-point scale mixture priors. We also consider the equal and quadratic variance models, but defer these results to \Cref{appendix: extra simul} for space considerations. 
%We detail two sets of experiments, saving additional experiments for \Cref{appendix: extra simul}. 
On the subset-of-signals prior, we compare our estimator with two other estimators designed for the subset-of-signal problem: the sample median and the iterative truncation estimator in \citep{liang2020learning}. On the two-point and three-point priors, in addition to the median estimator, we compare with two \emph{oracle} estimators: the linear estimator $\widehat{\mu}_{\text{known}-\sigma}$ in \eqref{eq: mle known si} which we call the oracle MLE, and the MLE of $\mu$ in \eqref{eq:EB_MLE} with the true knowledge of $G_n$. We emphasize that apart from the grid search, number of starting particles, and stopping criterion, our algorithm is completely free of tuning parameters.

\Cref{fig:sos_prior} and \ref{fig:finite-point} display at the log scale the average absolute estimation error of $\mu$ over $150$ simulations for both sets of experiments over a range of scales for $n$. In \Cref{fig:sos_prior}, we set %$(\sigmin,\sigmax)= (0.7,150)$ and 
$m\in \{ n^{1/3}, n^{1/2}\}$, and the hyperparameters $(B,\mu_0)=(10,1)$ for the iterative truncation estimator. We observe that $\widehat{\mu}^{\EB}$ outperforms the other two estimators in both settings. In \Cref{fig:finite-point}, the scale mixture is a two-point prior $G_n = \frac{1}{100}\delta_1 + \frac{99}{100}\delta_{30}$ or three-point prior $G_n = \frac{1}{20}\delta_{1} + \frac{9}{10}\delta_{60}+\frac{1}{20}\delta_{100}$. We observe that the performance of $\widehat{\mu}^{\EB}$, obtained without knowledge of $G_n$, is close to that of the MLE computed with knowledge of $G_n$. This indicates that the proposed successive maximization algorithm, in particular the Frank--Wolfe procedure used to estimate $\widehat{G}$, exhibits good empirical convergence, despite the potential for convergence to local minima. % We defer the experimental results under more settings to the appendix.  

\section{Conclusion and open problems}\label{sec:conclusion}

In this work, we demonstrate that a simple empirical Bayes approach based on maximum likelihood estimation achieves near-optimal performance for heteroskedastic mean estimation across a wide range of instances. Compared with traditional empirical Bayes approaches, our procedure replaces heterogeneity with a mixing distribution (prior) and learns it from the data, but does not invoke explicit posterior inference. 
Instead, the learned prior is treated as a nuisance estimate and is used in constructing our final mean estimator, which takes the form of a profile MLE. 
It is thus an interesting finding that this empirical Bayes approach still achieves near-optimal statistical performance in heteroskedastic mean estimation. Several interesting questions remain open. 

% Our main contribution is showing that the joint MLE of the mean parameter $\mu$ and the nonparametric mixing distribution $G$ adaptively attains minimax rates for the subset-of-signals problem, including the correct phase transition at $m = n^{1/4}$, without requiring knowledge of the signal size $m$ or any tuning parameters beyond the minimum variance $\sigma_{\min}$. The key technical innovations enabling this result are improved metric entropy bounds for normal scale mixtures via Chebyshev polynomial approximations, which yield polylogarithmic rather than polynomial dependence on $1/\sigma_{\min}$, and a new functional inequality that bounds the Hellinger modulus of continuity for location families under structural assumptions on the mixing distribution.

\paragraph{Efficient computation.} From a computational perspective, the optimization problem in \eqref{eq:EB_MLE} is nonconvex (in $\mu$) and infinite-dimensional (in $G$). While our numerical experiments suggest that successive maximization with Frank--Wolfe performs well in practice, it would be valuable to develop provably efficient algorithms and/or to understand the landscape of this optimization problem. 

\paragraph{Hyperparameters $(\sigmin, \sigmax)$.} Our theoretical guarantee involves a logarithmic factor in $\frac{\sigmax}{\sigmin}$, which primarily arises from our proof strategy for establishing the density estimation guarantee in \Cref{lemma:density_estimation}. Indeed, when $\sigmin = 0$, the likelihood in \eqref{eq:EB_MLE} can be made unbounded by assigning positive mass to $\widehat{G}({0})$ and choosing $\widehat{\mu} \in \sth{X_1,\dots,X_n}$, in which case $H^2(f_{\widehat{\mu},\widehat{G}}, f_{\mu,G})$ is no longer small. However, this does \emph{not} imply that the resulting estimator $\widehat{\mu}$ fails to estimate $\mu$ accurately. For example, in our numerical experiments, we always set $\sigmin = 0$ and $\sigmax = \infty$, yet the resulting estimator continues to perform well. As another illustration, the Le Cam--Birg\'{e}-type mean estimator studied in \citep{compton2025attainability} likewise satisfies $\widehat{\mu} \in \sth{X_1,\dots,X_n}$, suggesting that estimators of this form can still achieve good performance. It is therefore an interesting question to remove the assumptions on $(\sigmin, \sigmax)$.
\paragraph{Le Cam's lower bound for compound problems.} Although $\omega_{H^2,G_n}(\frac{1}{n})$ is a minimax lower bound for the mean estimation problem where $X_1,\dots,X_n$ are i.i.d. drawn from $f_{\mu,G_n}$ with known $G_n$, the same claim is not rigorous in the compound setting. In compound setting, Le Cam's method only gives the following lower bound: let $
\bP_{\mu, G_n} := \bE_{\pi\sim \Unif(S_n)} [ \bigotimes_{i=1}^n \calN(\mu, \sigma_{\pi(i)}^2) ] $
be the $n$-dimensional ``permutation mixture'', then a minimax lower bound for mean estimation is
\begin{align*}
\sup\bsth{ |\mu_1-\mu_2|: \mu_1, \mu_2 \in \bR,  H^2(\bP_{\mu_1,G_n}, \bP_{\mu_2,G_n}) \le 2-\Omega(1)}. 
\end{align*}
Assuming a mean-field approximation $\bP_{\mu,G_n} \approx f_{\mu,G_n}^{\otimes n}$ for permutation mixtures, the above quantity would essentially reduce to $\omega_{H^2,G_n}(\frac{1}{n})$. However, although recent work \citep{han2024approximate,liang2025sharp} show that such a mean-field approximation holds quantitatively and independently of the dimension $n$, these results only guarantee $\TV(\bP_{\mu,G_n}, f_{\mu,G_n}^{\otimes n}) \le 1 - c$ for a small constant $c > 0$. Consequently, a direct application of the triangle inequality yields a vacuous bound (incurring a cost of $2 \cdot \TV$) and is therefore insufficient to establish the desired equivalence.

% \vspace{-0.4cm}
\paragraph{Multivariate settings.} Extending our framework to multivariate settings, where observations $X_i \in \mathbb{R}^d$ have heteroskedastic covariance matrices, presents both statistical and computational challenges. On the statistical side, the density estimation-based proof strategy becomes unsuitable, since the Hellinger error typically scales exponentially in $d$. On the computational side, computing the NPMLE in high dimensions is challenging, and standard algorithms such as Frank–Wolfe become less effective.

% \vspace{-0.3cm}
\paragraph{Acknowledgement.} Yanjun Han would like to thank Cun-Hui Zhang for helpful discussions at an early stage of this project. Abhishek Shetty would like to thank Shyam Narayanan, Illias Diakonikolas, and Daniel Kane for helpful discussions. The numerical experiments were supported by NYU High Performance Computing (HPC) resources. 

\appendix

%\section{Discussion}\label{sec:discussion}
%\input{app_discussion.tex}

% \section{Proof of \Cref{lemma:density_estimation}}

% Placeholder - full proof in appendix
% The proof follows from the entropic upper bound combined with the covering number bound.

\section{Proof of \Cref{thm:general}}\label{sec:symmetrization}
% \subsection{Proof of Lemma \ref{lemma:symmetrization}} \label{sec:symmetrization}

% \paragraph{Relating Error in Density to Error in Mean Estimation} \YH{will move to appendix.}

To complete the proof of \Cref{thm:general}, we prove a symmetrization inequality which reduces possibly different scale mixtures to a single mixture. 
\begin{lemma}\label{lemma:symmetrization}
Let $\mu_1, \mu_2 \in \bR$, and $G_1, G_2$ be two prior distributions over $[0,\infty)$. Then
\begin{align*}
H^2(f_{\mu_1,G_1}, f_{\mu_2,G_2}) \ge \frac{1}{4} H^2(f_{|\mu_1-\mu_2|,G_1}, f_{-|\mu_1-\mu_2|,G_1}). 
\end{align*}
\end{lemma}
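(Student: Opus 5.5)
The plan is to use the reflection symmetry of centered scale mixtures together with the triangle inequality for the Hellinger distance $H$, which is a metric. Write $P := f_{\mu_1,G_1}$ and $Q := f_{\mu_2,G_2}$, and let $R(x) := 2\mu_2 - x$ be the reflection about $\mu_2$.

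\textbf{Step 1 (symmetry of $Q$).} Every normal scale mixture centered at $\mu_2$ is symmetric about $\mu_2$, since each component $\frac{1}{\sigma}\varphi(\frac{x-\mu_2}{\sigma})$ is. Hence $Q\circ R = Q$ pointwise.

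\textbf{Step 2 (invariance of $H$).} $R$ is a measure-preserving bijection of $\bR$, so the Hellinger integral is unchanged when both densities are composed with it:
\begin{align*}
H(P\circ R, Q) = H(P\circ R, Q\circ R) = H(P,Q).
\end{align*}

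\textbf{Step 3 (identify $P\circ R$).} We have $P\circ R(x) = \bE_{\sigma\sim G_1}\bqth{\frac{1}{\sigma}\varphi\bpth{\frac{2\mu_2-x-\mu_1}{\sigma}}}$. Since $\varphi$ is even, this equals $f_{2\mu_2-\mu_1,G_1}(x)$. So $P\circ R$ is the same scale mixture as $P$, with center moved from $\mu_1$ to $2\mu_2-\mu_1$, i.e.\ shifted by $2(\mu_2-\mu_1)$.

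\textbf{Step 4 (triangle inequality).}
\begin{align*}
H(f_{\mu_1,G_1}, f_{2\mu_2-\mu_1,G_1}) \le H(P,Q) + H(Q, P\circ R) = 2H(P,Q).
\end{align*}

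\textbf{Step 5 (normalize the pair).} Hellinger distance is invariant under a common translation of both densities. Translating by $\mu_2-\mu_1$, the left-hand side of Step 4 becomes $H(f_{\mu_1-\mu_2,G_1}, f_{\mu_2-\mu_1,G_1})$. This pair equals $\{f_{\Delta,G_1}, f_{-\Delta,G_1}\}$ with $\Delta = |\mu_1-\mu_2|$, possibly with the two entries swapped, and $H$ is symmetric in its arguments. Squaring Step 4 then gives
\begin{align*}
H^2(f_{\mu_1,G_1}, f_{\mu_2,G_2}) \ge \tfrac{1}{4}H^2(f_{\Delta,G_1}, f_{-\Delta,G_1}),
\end{align*}
which is the claim.

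There is no real obstacle here. The one point needing care is the choice of the auxiliary distribution: reflect about the center $\mu_2$ of $Q$, not about $\mu_1$. This way $Q$ is left fixed, $G_2$ disappears entirely, and only the known mixture $G_1$ remains. That is exactly what lets \Cref{thm:general} pass from \Cref{lemma:density_estimation}, with unknown $\widehat{G}$, to the modulus $\omega_{H^2,G_n}$.
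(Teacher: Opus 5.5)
Your proof is correct and is essentially the paper's argument. The paper likewise reflects about the center of $f_{\mu_2,G_2}$ (after normalizing $\mu_2=0$) so that this mixture is fixed and $G_2$ drops out, then applies the pointwise inequality $a^2+b^2\ge (a-b)^2/2$ to the averaged integrand; that is just the integrated form of your triangle-inequality step and gives the same constant $\frac14$. (One trivial slip: in Step 5 the common translation should be by $-\mu_2$, not by $\mu_2-\mu_1$, to land on the centers $\pm(\mu_1-\mu_2)$.)
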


% \begin{proof}[Proof of \cref{thm:general}]
    The proof of \cref{thm:general} follows by $(\mu_1,G_1) = (\mu,G_n)$ and $(\mu_2,G_2) = (\widehat{\mu},\widehat{G})$ in \Cref{lemma:symmetrization} and \Cref{lemma:density_estimation}. 
% \end{proof}
Crucially, thanks to \Cref{lemma:symmetrization}, the prior $G_2$, which corresponds to the estimated prior $\widehat{G}$, does not appear on the right-hand side and is therefore not subject to any structural requirements such as those imposed on $G_1$. 
This asymmetry is the key technical reason why our estimator adapts to the signal size $m$ in the subset-of-signals problem.

\begin{proof}[Proof of \Cref{lemma:symmetrization}]
By translation and reflection invariance, we may assume that $\mu_1 = \mu \ge 0$, and $\mu_2 = 0$. The proof will rely on an easy symmetric relationship: 
\begin{align}\label{eq:reflection}
f_{\mu, G}(x) = \bE_{\sigma\sim G}\qth{\frac{1}{\sigma}\varphi\pth{\frac{x-\mu}{\sigma}}} = \bE_{\sigma\sim G}\qth{\frac{1}{\sigma}\varphi\pth{\frac{-x+\mu}{\sigma}}} = f_{-\mu,G}(-x). 
\end{align}
We repeatedly apply this symmetry to get
\begin{align*}
H^2(f_{\mu,G_1}, f_{0,G_2}) &= \int_{-\infty}^\infty \bpth{ \sqrt{f_{\mu,G_1}(x)} - \sqrt{f_{0,G_2}(x)} }^2 \rmd x \\
&= \frac{1}{2} \int_{-\infty}^\infty \bqth{\bpth{ \sqrt{f_{\mu,G_1}(x)} - \sqrt{f_{0,G_2}(x)} }^2 + \bpth{ \sqrt{f_{\mu,G_1}(-x)} - \sqrt{f_{0,G_2}(-x)} }^2} \rmd x \\
&\overset{\prettyref{eq:reflection}}{=} \frac{1}{2} \int_{-\infty}^\infty \bqth{\bpth{ \sqrt{f_{\mu,G_1}(x)} - \sqrt{f_{0,G_2}(x)} }^2 + \bpth{ \sqrt{f_{\mu,G_1}(-x)} - \sqrt{f_{0,G_2}(x)} }^2} \rmd x \\
&\ge \frac{1}{4}\int_{-\infty}^\infty \bpth{ \sqrt{f_{\mu,G_1}(x)} - \sqrt{f_{\mu,G_1}(-x)} }^2 \rmd x \\
&\overset{\prettyref{eq:reflection}}{=}  \frac{1}{4}\int_{-\infty}^\infty \bpth{ \sqrt{f_{\mu,G_1}(x)} - \sqrt{f_{-\mu,G_1}(x)} }^2 \rmd x \\
&= \frac{1}{4}H^2(f_{\mu,G_1}, f_{-\mu,G_1}), 
\end{align*}
and the inequality step uses $a^2+b^2\ge \frac{(a-b)^2}{2}$. This is the desired result. 
\end{proof}

\section{Density Estimation: Proof of \Cref{lemma:density_estimation}}\label{app:proof_density_estimation}

In this section, we provide the proofs of the key lemmas used to establish \Cref{lemma:density_estimation}. 

\subsection{Proof of  \cref{lemma:entropic-upper-bound}} \label{app:proof_entropic_upper_bound}

Our proof mostly follows the arguments in \citep{geer2000empirical}. We first mimic the arguments of \citep[Lemma 4.1]{geer2000empirical}. Let $P_n$ be the empirical distribution of $X_1,\dots,X_n$, then
\begin{align*}
-\frac{1}{n}\log\frac{1}{\beta} &\stepa{\le} \int \log\frac{\widehat{P}}{\overline{P}} \rmd P_n \stepb{\le} \frac{1}{2}\int \log\frac{\widehat{P}+\overline{P}}{2\overline{P}}\rmd P_n \\
&= \frac{1}{2}\int \log\frac{\widehat{P}+\overline{P}}{2\overline{P}}\rmd (P_n - \overline{P}) - \frac{1}{2}\KL\bpth{\overline{P} \Big\| \frac{\widehat{P}+\overline{P}}{2}} \\
&\stepc{\le} \frac{1}{2}\int \log\frac{\widehat{P}+\overline{P}}{2\overline{P}}\rmd (P_n - \overline{P}) - \frac{1}{2}H^2\bpth{\overline{P}, \frac{\widehat{P}+\overline{P}}{2}}, 
\end{align*}
where (a) follows from the definition of $\widehat{P}$ and $\overline{P}\in \calP$, (b) is due to the concavity of $x\mapsto \log x$, and (c) uses the inequality $\KL \ge H^2$. Therefore, 
\begin{align}\label{eq:basic_inequality}
H^2(\widehat{P}, \overline{P}) \stepd{\le} 16H^2\bpth{\overline{P}, \frac{\widehat{P}+\overline{P}}{2}} \le 16\int \log\frac{\widehat{P}+\overline{P}}{2\overline{P}}\rmd (P_n - \overline{P}) + \frac{32}{n}\log\frac{1}{\beta}, 
\end{align}
where (d) follows from \citep[Lemma 4.2]{geer2000empirical}. This is called the ``basic inequality'' in \citep{geer2000empirical}. 

The next step is to prove a high-probability upper bound on the integral in \eqref{eq:basic_inequality}, using empirical processes. For $P\in \calP$, let $Z_i(P) = \frac{1}{2}\log \frac{P+\overline{P}}{2\overline{P}}(X_i)$. For the subexponential norm $\rho^2(g):=2\bE[e^{|g|}-1-|g|]$, it holds that
\begin{align*}
\frac{1}{n}\sum_{i=1}^n \rho^2(Z_i(P)) &\stepe{\le} \frac{1}{n}\sum_{i=1}^n 8\bE\bqth{\bpth{ \bpth{\frac{P+\overline{P}}{2\overline{P}}(X_i)}^{1/2}-1 }^2} \\
&= 8\bE_{\overline{P}}\bqth{ \bpth{ \bpth{\frac{P+\overline{P}}{2\overline{P}}(X_i)}^{1/2}-1 }^2 } = 8H^2\pth{\frac{P+\overline{P}}{2}, \overline{P}} \le 4H^2(P,\overline{P}), 
\end{align*}
where (e) uses \citep[Lemma 7.1]{geer2000empirical} and that $Z_i(P)\ge -\frac{1}{2}\log 2$. In addition, for any bracket $[P^{\rL}, P^{\rU}]$, we have $Z_i(P^{\rL}) \le Z_i(P^{\rU})$, and the similar arguments to \citep[Lemma 7.3]{geer2000empirical} yield
\begin{align*}
\frac{1}{n}\sum_{i=1}^n \rho^2(Z_i(P^{\rU}) - Z_i(P^{\rL})) &\le \frac{1}{n}\sum_{i=1}^n \bE\bqth{\bpth{ \bpth{\frac{P^{\rU}+\overline{P}}{P^{\rL}+\overline{P}}(X_i)}^{1/2}-1 }^2} \\
&= \bE_{\overline{P}}\bqth{\bpth{ \bpth{\frac{P^{\rU}+\overline{P}}{P^{\rL}+\overline{P}}(X_i)}^{1/2}-1 }^2} \\
&\le 2H^2\pth{\frac{P^{\rU} + \overline{P}}{2}, \frac{P^{\rL} + \overline{P}}{2}} \le H^2(P^{\rU}, P^{\rL}). 
\end{align*}
Therefore, compared with the Hellinger bracketing number $N_{[]}(\varepsilon,\calP(\delta),H)$, the conditions of \citep[Definition 8.1]{geer2000empirical} are fulfilled with $(\delta, R) = (\varepsilon, 2\delta)$ and $F^c = \varnothing$. Since
\begin{align*}
\frac{1}{n}\sum_{i=1}^n Z_i(P) - \bE\bqth{\frac{1}{n}\sum_{i=1}^n Z_i(P)} = \frac{1}{2}\int \log\frac{P+\overline{P}}{2\overline{P}}\rmd (P_n-\overline{P})
\end{align*}
coincides with the integral in the basic inequality \eqref{eq:basic_inequality}, now \citep[Theorem 8.13]{geer2000empirical} (with $R=2^{s+1}\delta$ and $a = c2^{2s}\delta^2\sqrt{n}$) yields
\begin{align*}
\bP\bpth{ \exists P\in \calP: \Big|\int \log\frac{P+\overline{P}}{2\overline{P}}\rmd (P_n-\overline{P})\Big| \ge c2^{2s}\delta^2 \wedge H(P, \overline{P})\le 2^{s+1}\delta}\le C\exp\bpth{-\frac{n(2^s\delta)^2}{C^2}}
\end{align*}
for every $s\ge 0$ with $2^s\delta \le 1$. Finally, by summing over $s=0,1,\dots$, a standard peeling argument similar to \citep[Theorem 7.4]{geer2000empirical} completes the proof. 

\subsection{Proof of \Cref{thm:metric-entropy}}\label{append:metric-entropy}
In this section, we prove \Cref{thm:metric-entropy} based on \Cref{lemma:key-covering}. The proof decomposes into three steps. 

\subsubsection{$L_\infty$ covering for scale mixtures}
Our first step is to establish an upper bound on the covering number of \emph{pure} scale mixtures
\begin{align*}
\calP_0 = \{f_{0,G}: \supp(G)\subseteq [\sigmin, \sigmax]\}
\end{align*}
under the $L_\infty$ norm. 

\begin{lemma}\label{lemma:step-I}
For $\varepsilon\in (0,1/2)$ and $\sigmin \le 1$, 
\begin{align*}
    \log N(\varepsilon,\calP_0,L_\infty) \le C\log^3\pth{\frac{1}{\varepsilon\sigmin}}\log^4\log \pth{\frac{1}{\varepsilon\sigmin}}. 
\end{align*}
\end{lemma}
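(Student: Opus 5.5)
We reduce \Cref{lemma:step-I} to \Cref{lemma:key-covering} by reparametrizing $t=1/\sigma$ and truncating in $x$. For $G$ supported on $[\sigmin,\sigmax]$, let $H$ be the pushforward of $G$ under $\sigma\mapsto 1/\sigma$. Then
\begin{align*}
f_{0,G}(x) = \frac{1}{\sqrt{2\pi}}\bE_{t\sim H}\qth{t e^{-t^2x^2/2}}, \qquad t\in [1/\sigmax, 1/\sigmin].
\end{align*}
Every $f\in\calP_0$ is even and satisfies $0\le f\le 1/\sigmin$. Moreover $f$ is nonincreasing in $|x|$, since each component is.

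The main difficulty is that $L_\infty$ covering must not depend on $\sigmax$, which is absent from the bound. We handle this with a lower truncation of $t$. Set $t_0=\varepsilon$. Any component with $t<t_0$ satisfies $0\le t e^{-t^2x^2/2}\le t<\varepsilon$ uniformly in $x$. Hence the contribution of the mass of $H$ on $[0,t_0)$ is a function with values in $[0,\varepsilon\, H([0,t_0))]$.

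We write $f = (1-w)f_1 + w\,r$, where $w=H([0,t_0))$, $f_1$ is the mixture restricted to $t\ge t_0$, and $0\le r\le \varepsilon$. Discretizing $w$ on an $\varepsilon\sigmin$-grid costs only $O(\log\frac{1}{\varepsilon\sigmin})$ entropy. The term $w r$ is within $\varepsilon$ of $0$. So it suffices to cover $\{f_1\}$, which are mixtures with $t\in[t_0, t_{\max}]$ and $t_{\max}=1/\sigmin$. If $\sigmax<1/\varepsilon$, no truncation is needed. In either case the effective ratio $t_{\max}/t_{\min}$ is at most $1/(\varepsilon\sigmin)$.

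Next we truncate in $x$, using $x=0$ as the anchor.

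For small $|x|\le x_{\min}$, set $x_{\min}=\varepsilon\sigmin$. Each component satisfies $|te^{-t^2x^2/2}-t|\le t^3x^2/2$, but this bound is bad for large $t$. A better route uses monotonicity: $f_1(x)\in[f_1(x_{\min}),f_1(0)]$, and
\begin{align*}
f_1(0)-f_1(x_{\min})\le \sup_t t\bpth{1-e^{-t^2x_{\min}^2/2}}.
\end{align*}
This is large only when $t\gtrsim 1/x_{\min}$, but $t\le 1/\sigmin$. So with $x_{\min}=\varepsilon\sigmin^2$ we get $t^3x_{\min}^2\le\varepsilon$, which is fine. We cover the values $f_1(0)$ on an $\varepsilon$-grid of $[0,1/\sigmin]$ and add it as one extra coordinate.

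For large $|x|\ge x_{\max}$, we use $te^{-t^2x^2/2}\le \frac{1}{x}\sup_s se^{-s^2/2}\le \frac{1}{x}$. So $x_{\max}=1/\varepsilon$ suffices.

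On $[x_{\min},x_{\max}]$ (which contains $1$ after normalizing $t_{\min}\le 1\le t_{\max}$ by rescaling $x\leftrightarrow t$), we apply \Cref{lemma:key-covering}. The logarithm of the ratio is
\begin{align*}
\log\frac{t_{\max}x_{\max}}{\varepsilon t_{\min}x_{\min}} = O\bpth{\log\frac{1}{\varepsilon\sigmin}}.
\end{align*}
Combining the covers of the three pieces, together with the scalar grids for $w$ and $f_1(0)$, gives a sup-norm cover at scale $O(\varepsilon)$. Its log-cardinality is $C\log^3(\frac{1}{\varepsilon\sigmin})\log^4\log(\frac{1}{\varepsilon\sigmin})$, and we adjust constants to finish.

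The expected obstacle is keeping every truncation scale polynomial in $1/(\varepsilon\sigmin)$ while avoiding $\sigmax$, which is exactly what the lower $t$-truncation handles.
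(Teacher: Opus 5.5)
Your proposal is correct and follows essentially the same route as the paper. Both arguments reparametrize $t=1/\sigma$, split off the mass with $t\lesssim\varepsilon$ and grid its weight, control small $x$ through a $t^3x^2$ bound (you use monotonicity, the paper uses a derivative bound), control large $x$ by a decay bound, and apply \Cref{lemma:key-covering} on the middle window. The differences are cosmetic: you take $x_{\min}=\varepsilon\sigmin^2$ and $x_{\max}=1/\varepsilon$, using the uniform bound $te^{-t^2x^2/2}\le e^{-1/2}/x$, where the paper takes $x_{\min}=c_0\sqrt{\varepsilon\sigmin^3}$ and $x_{\max}=C_0\varepsilon^{-1}\sqrt{\log(1/\varepsilon)}$; your extra grid on $f_1(0)$ is unnecessary but harmless.
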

\begin{proof}
The proof relies on \Cref{lemma:key-covering}, with a suitable reparametrization and truncation. Let $t=1/\sigma$ and $H$ be the pushforward measure of $G$ under $\sigma\mapsto 1/\sigma$, then $H$ is supported on $[0, \sigmin^{-1}]$, and 
\begin{align*}
f_{0,G}(x) = \bE_{\sigma\sim G}\bqth{\frac{1}{\sqrt{2\pi}\sigma}\exp(-\frac{x^2}{2\sigma^2})} = \bE_{t\sim H}\bqth{\frac{t}{\sqrt{2\pi}}\exp(-\frac{t^2x^2}{2})} =: \widetilde{f}_H(x). 
\end{align*}
To construct an $\varepsilon$-cover of $\{\widetilde{f}_H: \supp(H)\subseteq [0,\sigmin^{-1}]\}$, we invoke \Cref{lemma:key-covering} with parameters
\begin{align} \label{eq:parameter_choices}
t_{\min} = \frac{\varepsilon}{4}, \quad t_{\max} = \frac{1}{\sigmin}, \quad x_{\min} = c_0(\varepsilon \sigmin^3)^{1/2}, \quad 
x_{\max} = C_0\varepsilon^{-1}\sqrt{\log(1/\varepsilon)}, 
\end{align}
where $c_0,C_0>0$ are appropriate universal constants. By \Cref{lemma:key-covering}, there exists a cover $\calH$ with $|\calH| = \exp(O(\log^3\frac{1}{\varepsilon\sigmin}\log^4\log\frac{1}{\varepsilon\sigmin}))$ such that for every $H$ supported on $[t_{\min},t_{\max}]$, there exists $H_0\in \calH$ such that
\begin{align*}
\sup_{x\in [x_{\min},x_{\max}]} \Big| \widetilde{f}_H(x) - \widetilde{f}_{H_0}(x) \Big| \le \frac{\varepsilon}{4}. 
\end{align*}
% We show that $\{\widetilde{f}_{H_0}(x): H_0\in \calH\}$ remains an $\varepsilon$-cover even for $\supp(H)\subseteq [0,t_{\max}]$ and $x\notin [x_{\min}, x_{\max}]$. 
We show that a similar inequality still holds even if $x\notin [x_{\min},x_{\max}]$: 
\begin{enumerate}
\item If $x\ge x_{\max}$, the function
$t\mapsto t e^{-t^2 x^2/2}$ is decreasing for $x\ge 1/t_{\min}$, hence 
\begin{align*}
0\le \widetilde{f}_H(x)
\le \frac{t_{\min}}{\sqrt{2\pi}}\exp\bpth{-\frac{t_{\min}^2 x^2}{2}}
\le \frac{\varepsilon}{4}
\end{align*}
for a large constant $C_0$. The same bound also holds for $\widetilde{f}_{H_0}$, so that $| \widetilde{f}_H(x) - \widetilde{f}_{H_0}(x)| \le \frac{\varepsilon}{4}$. 
\item If $0\le x\le x_{\min}$, note that
\begin{align*}
|\widetilde{f}_H'(x)| = x\bE_{t\sim H}\bqth{\frac{t^3}{\sqrt{2\pi}}\exp(-\frac{t^2x^2}{2})} \le \frac{x_{\min}}{\sqrt{2\pi}\sigmin^3}.
\end{align*}
Therefore, for a small constant $c_0$, 
\begin{align*}
&|\widetilde{f}_H(x) - \widetilde{f}_{H_0}(x)| \\
&\le |\widetilde{f}_H(x) - \widetilde{f}_H(x_{\min})| + |\widetilde{f}_H(x_{\min}) - \widetilde{f}_{H_0}(x_{\min})| + |\widetilde{f}_{H_0}(x_{\min}) - \widetilde{f}_{H_0}(x)| \\
&\le 2\cdot \frac{x_{\min}^2}{\sqrt{2\pi}\sigmin^3} + \frac{\varepsilon}{4} \le \frac{\varepsilon}{2}. 
\end{align*}
\item If $x\le 0$, since $\widetilde{f}_H(x)$ is an even function, the statement follows from symmetry. 
\end{enumerate}
Therefore, we have established that if $\supp(H)\subseteq [t_{\min}, t_{\max}]$, then $\|\widetilde{f}_H - \widetilde{f}_{H_0}\|_\infty \le \frac{\varepsilon}{2}$. 

Next we extend to the case where $\supp(H)$ could be $[0,t_{\max}]$. Note that if $\supp(H)\subseteq [0,t_{\min}]$, then $\widetilde{f}_H(x)\le t_{\min}= \frac{\varepsilon}{4}$. Therefore, we can modify the cover $\calH$ by a new cover
\begin{align}
\calH' = \bsth{ (1-w)\delta_0 + wH_0: H_0\in \calH, w\in\{0, \delta, 2\delta, \dots, 1 \} }
\end{align}
with $\delta = \frac{\varepsilon}{4t_{\max}}$, so that $|\calH'| = O(|\calH|/\delta)$. Now for any $H$ supported on $[0,t_{\max}]$, we write $H=(1-w)H_1 + wH_2$ with $\supp(H_1)\subseteq [0,t_{\min}), \supp(H_2)\subseteq [t_{\min},t_{\max}]$, and $w\ge 0$. Then for $H' = (1-w')\delta_0 + w'H_0 \in \calH'$ with $|w-w'|\le \delta$ and $H_2$ approximated by $H_0\in \calH$, we get
\begin{align*}
\|\widetilde{f}_H - \widetilde{f}_{H'}\|_\infty &= \|(1-w)\widetilde{f}_{H_1} + w\widetilde{f}_{H_2} - w'\widetilde{f}_{H_0}\|_\infty \\
&\le \|\widetilde{f}_{H_1}\|_\infty + \|\widetilde{f}_{H_2}-\widetilde{f}_{H_0}\|+ |w-w'|\cdot  \|\widetilde{f}_{H_0}\|_\infty \\
&\le \frac{\varepsilon}{4} + \frac{\varepsilon}{2} + \delta \frac{t_{\max}}{\sqrt{2\pi}} \le \varepsilon. 
\end{align*}
This shows that $\{\widetilde{f}_{H'}: H' \in \calH'\}$ is an $\varepsilon$-cover of $\{\widetilde{f}_H: \supp(H)\subseteq [0,\sigmin^{-1}]\}$ under $L_\infty$, with
\begin{align*}
\log |\calH'| = \log |\calH| + O\bpth{\log \frac{1}{\delta}} = O\bpth{\log^3\frac{1}{\varepsilon\sigmin}\log^4\log\frac{1}{\varepsilon\sigmin}}. 
\end{align*}
\end{proof}

\subsubsection{Hellinger bracketing for scale mixtures} \label{app:proof_bracketing_vs_covering}

For the class of scale mixtures $\calP_0$, by the standard approach in \citep{ghosal2001entropies}, we can transform an $L_\infty$-cover into a Hellinger bracket. 
\begin{lemma}
\label{lemma:step-II}
For $\varepsilon\in (0,1/2)$, 
\begin{align*}
\log \bracketing(\varepsilon, \calP_0, H) \le C\log^3\bpth{\frac{\sigmax}{\varepsilon \sigmin}}\log^4\log\bpth{\frac{\sigmax}{\varepsilon \sigmin}}. 
\end{align*}
\end{lemma}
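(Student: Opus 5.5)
Following \citep{ghosal2001entropies}, we convert an $L_\infty$ cover (from \Cref{lemma:step-I}) into Hellinger brackets, using an envelope to control the tails. By scale invariance, rescale $x\mapsto x/\sigmax$ so that WLOG $\sigmax=1$ and $\sigmin$ becomes $\sigmin/\sigmax\le 1$. Hellinger brackets are preserved under this change of variables. This explains why only the ratio $\sigmax/\sigmin$ appears.

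\textbf{Step 1: cover and envelope.} Apply \Cref{lemma:step-I} at a scale $\eta$ to be chosen, polynomial in $\varepsilon\sigmin$. This gives centers $f_1,\dots,f_N$ with $\log N \le C\log^3\frac{1}{\eta\sigmin}\log^4\log\frac{1}{\eta\sigmin}$. Every $f\in\calP_0$ satisfies $\|f-f_j\|_\infty\le\eta$ for some $j$. Every $f\in\calP_0$ also obeys the envelope
\begin{align*}
f(x)\le E(x):=\frac{1}{\sqrt{2\pi}\sigmin}\exp\bpth{-\frac{x^2}{2}},
\end{align*}
using $\sigma\le 1$ in the exponent and $1/\sigma\le 1/\sigmin$ in the prefactor.

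\textbf{Step 2: brackets.} Fix a truncation level $B=C\sqrt{\log\frac{1}{\varepsilon\sigmin}}$. Define
\begin{align*}
\ell_j=\max(f_j-\eta,0)\,\indc{|x|\le B},\qquad u_j=\min\bpth{f_j+\eta,\,E}\,\indc{|x|\le B}+E\,\indc{|x|>B}.
\end{align*}
Then $\ell_j\le f\le u_j$ for every $f$ in the $j$-th cell.

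\textbf{Step 3: bracket width.} Bound
\begin{align*}
H^2(\ell_j,u_j)\le\int(u_j-\ell_j)\le 2\eta\cdot 2B+2\int_{|x|>B}E .
\end{align*}
Here we use $(\sqrt{u}-\sqrt{\ell})^2\le u-\ell$. The tail term is at most $\frac{C}{\sigmin}e^{-B^2/2}$, which is $\le\varepsilon^2/2$ for the chosen $B$. Taking $\eta=\varepsilon^2/(8B)$ makes the first term $\le\varepsilon^2/2$, so $H(\ell_j,u_j)\le\varepsilon$.

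\textbf{Step 4: entropy count.} Since $\log\frac{1}{\eta\sigmin}=O(\log\frac{1}{\varepsilon\sigmin})$, the number of brackets satisfies the claimed bound after undoing the normalization, that is, with $\sigmin$ replaced by $\sigmin/\sigmax$.

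The main obstacle is the tails. The $L_\infty$ cover is only useful on a window where it can be integrated, and the envelope's $1/\sigmin$ prefactor must be absorbed. This requires $B^2\gtrsim\log\frac{1}{\varepsilon\sigmin}$ rather than a polynomial truncation, which keeps the dependence polylogarithmic. The remaining care is ensuring $\eta$ stays polynomial in $\varepsilon\sigmin$, which it does.
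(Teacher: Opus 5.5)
Your proposal is correct and follows essentially the same route as the paper: normalize by scaling, take an $\eta$-cover from \Cref{lemma:step-I}, form brackets $\max\{f_j-\eta,0\}$ and $\min\{f_j+\eta,\text{envelope}\}$ with a Gaussian envelope carrying the $1/\sigmin$ prefactor, use $H^2\le L_1$, and choose the truncation $B\asymp\sqrt{\log\frac{1}{\varepsilon\sigmin}}$ and $\eta\asymp\varepsilon^2/B$. The only differences are cosmetic: you normalize to $\sigmax=1$ instead of assuming $\sigmin\le 1$, and you replace the bracket by $[0,E]$ outside $[-B,B]$ rather than bounding the tail width by the envelope.
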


\begin{proof}
Since the bracketing number is invariant by scaling $(\sigmin, \sigmax)$ simultaneously, WLOG we may assume that $\sigmin\le 1$ so that \Cref{lemma:step-I} can be applied. First, note that for (not necessarily probability) measures $\mu$ and $\nu$, 
\begin{align*}
H^2(\mu, \nu) = \int (\sqrt{\rmd \mu} - \sqrt{\rmd \nu})^2 \le \int |\rmd \mu - \rmd \nu| = \|\mu - \nu\|_1. 
\end{align*}
Therefore, $\log \bracketing (\varepsilon, \calP_0, H) \le \log \bracketing(\varepsilon^2, \calP_0, L_1)$. To construct a $L_1$-bracket, let $p_1, \dots, p_N$ be an $\eta$-cover of $\mathcal{P}_0$ in $L_{\infty}$, with $\eta>0$ to be specified later. 
% By \Cref{lemma:step-I}, we can take $\log N = O(\log^c(\frac{1}{\eta\sigmin})) = O(\log^c(\frac{\sigmax}{\varepsilon\sigmin}))$. 
We then construct the brackets $\{[P_i^L, P_i^U]: i \in [N]\}$ by $P_i^\rL := \max\{p_i - \eta, 0\}, P_i^\rU := \min\{p_i + \eta, R\}$,
with $R(x) = \frac{1}{\sigmin}\exp(-\frac{x^2}{2\sigmax^2})$. Since every $p\in \calP_0$ satisfies
\begin{align*}
p(x) = \bE_{\sigma\sim G}\bqth{\frac{1}{\sigma}\exp(-\frac{x^2}{2\sigma^2})} \le \frac{1}{\sigmin}\exp(-\frac{x^2}{2\sigmax^2}) = R(x), 
\end{align*}
the inequality $\|p-p_i\|_\infty \le \eta$ indeed implies $P_i^\rL \le p \le P_i^\rU$. To upper bound the $L_1$ norm, for every $B \geq 0$ we have
\begin{align*}
    \int_{\R}|P_{i}^\rL(x) - P_{i}^\rU(x)|\rmd x \leq \int_{|x|\geq B}R(x)\rmd x + 2\eta B \leq \frac{2\sigma_{\max}^2}{B\sigma_{\min}}\exp(-\frac{B^2}{2\sigmax^2}) + 2\eta B,
\end{align*}
where the last inequality uses the Mills' ratio upper bound. Therefore, by choosing 
\begin{align*}
B = C_0\sigmax\sqrt{\log(\frac{\sigmax}{\sigmin \varepsilon^2})}, \qquad \eta = \frac{\varepsilon^2}{C_0B}
\end{align*}
with a large universal constant $C_0>0$, we ensure that $\|P_i^\rU - P_i^\rL\|_1 \le \varepsilon^2$. Finally, by \Cref{lemma:step-I}, we can take $\log N = O(\log^3(\frac{1}{\sigmin\eta})\log^4\log (\frac{1}{\sigmin\eta}))=O(\log^3(\frac{\sigmax}{\sigmin \varepsilon})\log^4\log(\frac{\sigmax}{\sigmin \varepsilon}))$. 
\end{proof}

\subsubsection{Local Hellinger bracketing for location-scale mixtures}

Finally, we extend the Hellinger bracketing for the smaller family $\calP_0$ in \Cref{lemma:step-II} to a \emph{local} Hellinger bracketing for the entire family $\calP = \{f_{\mu,G}: \mu\in \mathbb{R}, \supp(G)\subseteq [\sigmin, \sigmax]\}$ involving the location parameter in \Cref{thm:metric-entropy}. \\

\begin{proof}[Proof of \Cref{thm:metric-entropy}]
Let $\overline{P} = f_{\mu_0,G_0}\in \calP$ be the center of the local ball; by translation invariance we may assume that $\mu_0 = 0$. We first establish an upper bound on $|\mu|$ for every $f_{\mu,G}\in \calP(\overline{P},\delta)$. In fact, by the symmetrization inequality in \Cref{lemma:symmetrization}, we have
\begin{align*}
\delta \ge H(f_{0,G_0}, f_{\mu,G}) \ge \frac{1}{2}H(f_{\mu,G_0}, f_{-\mu,G_0}). 
\end{align*}
By the variational lower bound of the Hellinger distance in \eqref{eq:Hellinger-variational}, we get
\begin{align*}
H^2(f_{\mu,G_0}, f_{-\mu,G_0}) \ge \frac{1}{4}(f_{\mu,G_0}([0,\infty)) - f_{-\mu,G_0}([0,\infty)))^2 \ge \frac{1}{4}\bpth{2\Phi(\frac{|\mu|}{\sigmax})-1}^2, 
\end{align*}
where $\Phi$ is the CDF of $\calN(0,1)$. For $\delta \le 1/8$, solving $|\mu|$ from the above two inequalities yields $|\mu| \le C\sigmax$ for a universal constant $C>0$, and therefore
\begin{align*}
\calP(\overline{P}, \delta) \subseteq \sth{ f_{\mu,G}: |\mu|\le C \sigmax, \supp(G)\subseteq [\sigmin, \sigmax] } =: \calP_{\mathrm{loc}}.  
\end{align*}
To construct a Hellinger bracket for $\calP_{\mathrm{loc}}$, we start from the Hellinger bracket $\{[P_i^\rL, P_i^\rU]: i\in [N]\}$ for $\calP_0$ constructed in \Cref{lemma:step-II}, with $\|P_i^\rU - P_i^\rL\|_1\le \frac{\varepsilon^2}{4}$. Let $\eta>0$ be a parameter to be specified later, and $\{\mu_1,\dots,\mu_M\}$ be a uniform discretization of $[-C \sigmax, C \sigmax]$ with spacing $2\eta$. Here $M=O(\frac{\sigmax}{\eta})$, and for $i\in [N], j\in [M]$, define
\begin{align*}
P_{i,j}^{\rL}(x) = \inf_{|z|\le \eta} P_i^\rL(x-\mu_j-z), \quad P_{i,j}^{\rU}(x) = \sup_{|z|\le \eta} P_i^\rU(x-\mu_j-z). 
\end{align*}
We claim that $\{[P_{i,j}^\rL, P_{i,j}^\rU]: i\in [N], j\in [M]\}$ is a bracket for $\calP_{\mathrm{loc}}$. In fact, for any $f_{\mu,G}\in \calP_{\mathrm{loc}}$, assume $P_i^\rL \le f_{0,G} \le P_i^\rU$ and $|\mu-\mu_j|\le \eta$. For this pair $(i,j)$, we have
\begin{align*}
f_{\mu,G}(x) = f_{0,G}(x-\mu) \ge \inf_{|z|\le \eta} f_{0,G}(x-\mu_j-\eta) \ge \inf_{|z|\le \eta} P_i^\rL(x-\mu_j-\eta) = P_{i,j}^\rL(x), 
\end{align*}
and similarly $f_{\mu,G} \le P_{i,j}^\rU$. To upper bound the $L_1$ norm of this bracket, recall that $P_i^\rL = \max\{p_i-\eta', 0\}$ for some $p_i\in \calP_0$ and $\eta'>0$ in the proof of \Cref{lemma:step-II}. Therefore, 
\begin{align*}
\sup_{|z|\le \eta} P_i^\rL(x-\mu_j-z) - \inf_{|z|\le \eta} P_i^\rL(x-\mu_j-z) &\le 2\eta\cdot \sup_{|z|\le \eta} |p'(x-\mu_j-z)| \\
&\le \frac{2\eta}{\sqrt{2\pi}\sigmin^2}\exp(-\frac{(|x-\mu_j|-\eta)_+^2}{2\sigmax^2}), 
\end{align*}
where the last step follows from simple algebra, with $x_+ = \max\{x,0\}$. Consequently, 
\begin{align}
\Big\| \sup_{|z|\le \eta} P_i^\rL(\cdot -\mu_j-z) - \inf_{|z|\le \eta} P_i^\rL(\cdot -\mu_j-z) \Big\|_1 &\le \frac{2\eta}{\sqrt{2\pi}\sigmin^2} \int_{\bR} \exp(-\frac{(|x-\mu_j|-\eta)_+^2}{2\sigmax^2}) \rmd x \nonumber \\
&\le \frac{C\eta}{\sigmin^2}\bpth{\eta + \sigmax}. \label{eq:ineq-1}
\end{align}

On the other hand, since both functions $P_i^\rU$ and $P_i^\rL$ constructed in the proof of \Cref{lemma:step-II} are even and non-increasing on $[0,\infty)$, we have
\begin{align*}
\sup_{|z|\le \eta} P_i^\rU(x -\mu_j-z)  = P_i^\rU((|x-\mu_j|-\eta)_+),
\end{align*}
and similarly for $P_i^\rL$. Therefore, integrating separately over two regimes $|x-\mu_j|\le \eta$ and $|x-\mu_j|>\eta$ yields
\begin{align}
    & \Big\| \sup_{|z|\le \eta} P_i^\rU(\cdot -\mu_j-z) - \sup_{|z|\le \eta} P_i^\rL(\cdot -\mu_j-z)\Big\|_1 \nonumber \\
    &= 2\eta |P_i^\rU(0) - P_i^\rL(0)| + \|P_i^\rU - P_i^\rL\|_1 
    \le 4\eta \eta' + \frac{\varepsilon^2}{4} \le \frac{\varepsilon^2}{2}, \label{eq:ineq-2}  
\end{align}
where the first inequality follows from $\|P_i^\rU - P_i^\rL\|_\infty \le 2\eta'$ and $\|P_i^\rU - P_i^\rL\|_1 \le \frac{\varepsilon^2}{4}$, and the second inequality follows from the choice of $\eta' \le \frac{\varepsilon^2}{4}$ in the proof of \Cref{lemma:step-II} and $\eta\le \frac{1}{4}$. Now by the triangle inequality and \eqref{eq:ineq-1}, \eqref{eq:ineq-2}, we obtain $
\|P_{i,j}^\rU - P_{i,j}^\rL\|_1 \le \varepsilon^2$ as long as
\begin{align*}
\eta = c_0\bpth{\varepsilon\sigmin \wedge \frac{\varepsilon^2\sigmin^2}{\sigmax}}
\end{align*}
for a small universal constant $c_0>0$. Since $H^2(\mu,\nu)\le \|\mu-\nu\|_1$, this implies that $\{[P_{i,j}^\rL, P_{i,j}^\rU]: i\in [N], j\in [M]\}$ is an $\varepsilon$-Hellinger bracket with size at most
\begin{align*}
\exp\bpth{O\bpth{\log^3(\frac{\sigmax}{\varepsilon\sigmin})\log^4\log(\frac{\sigmax}{\varepsilon\sigmin}) + \log \frac{\sigmax}{\eta}}} = \exp\bpth{O\bpth{\log^3(\frac{\sigmax}{\varepsilon\sigmin})\log^4\log(\frac{\sigmax}{\varepsilon\sigmin}) }}. 
\end{align*}
This completes the proof. 
\end{proof}

\subsection{Proof of \cref{lemma:key-covering}} \label{app:proof_covering_scale_mixture}

%Recall that we need to bound the $L_{\infty}$ covering number of the function class
%\begin{align*}
%\calF = \bsth{ f_H(x) = \bE_{t\sim H}\bqth{\frac{t}{\sqrt{2\pi}}\exp(-\frac{t^2 x^2}{2})}: \supp(H)\subseteq [t_{\min}, t_{\max}] } 
%\end{align*}
%on the domain $x\in [x_{\min}, x_{\max}]$.
From \cref{lemma:generalized-moment-matching}, it suffices to find functions $a_1(t), \dots, a_L(t)$ and $g_1(x), \dots, g_L(x)$ such that $|a_k(t)| + t\abs{a'_k(t)} \le A$ for all $t\in [t_{\min}, t_{\max}]$, $|g_k(x)|\le G$ for all $x\in [x_{\min}, x_{\max}]$, and
\begin{align} \label{eq:approximation_goal_1}
\sup_{x\in [x_{\min},x_{\max}], t\in [t_{\min},t_{\max}]} \Big| t e^{-\frac{t^2x^2}{2}} - \sum_{k=1}^L a_k(t) g_k(x) \Big| \le \varepsilon. 
\end{align}

Recall from \cref{lem:chebyshev_approximation}, for the function $h(v) = e^{-K e^{\lambda v}}$ on $v\in[-1,1]$, we have the approximation 
\begin{align}
    \abs{h(v) - P_L(v) } \leq \varepsilon' 
\end{align}
for all $v \in [-1,1]$ where $L = O(\log(1/\varepsilon') + \lambda \log(\lambda/\varepsilon'))$ and $P_L$ is a polynomial of degree $L$ in $v$. 
Recall that the degree-$L$ Chebyshev approximation of $h$ corresponds to the polynomial $P_L(v) = \sum_{j=0}^L c_j T_j(v)$,
where $T_j(x)$ is the degree-$j$ Chebyshev polynomial with $T_j(\cos( \theta)) = \cos(j \theta)$, and $c_j$ is the Chebyshev coefficient of $h$ defined as 
\begin{align}\label{eq:chebyshev-coeff}
    c_0 = \frac{1}{\pi}\int_0^\pi h(\cos(\theta))\rmd \theta, \quad c_j = \frac{2}{\pi} \int_{0}^{\pi} h(\cos(\theta)) \cos(j\theta) \rmd \theta, \quad j\ge 1. 
\end{align}

Since $T_j$ are polynomials in $v$, each of degree at most $L$, we can write $ P_L(v) = \sum_{j=0}^{L} p_j v^j $. 
% since each of the terms in the Chebyshev expansion is a polynomial in $u$ of degree at most $L$. 
Now set $u := \log t + \log x$, so that $u\in[u_{\min},u_{\max}]$ where $u_{\min} = \log(t_{\min}x_{\min})$ and $u_{\max} = \log(t_{\max} x_{\max})$. Define the affine rescaling
\begin{align*}
    v(u) := \frac{2u - (u_{\min}+u_{\max})}{u_{\max}-u_{\min}} \in [-1,1].
\end{align*}
Then
\begin{align*}
    e^{-\frac{t^2x^2}{2}}
    = \exp\!\left(-\frac{1}{2}e^{2u}\right)
    = \exp\!\left(-K e^{\lambda v(u)}\right),
\end{align*}
where $K = \frac{1}{2}e^{u_{\min}+u_{\max}} = \frac{1}{2}t_{\min}t_{\max}x_{\min}x_{\max}$ and
$\lambda = u_{\max}-u_{\min} = \log\frac{t_{\max}x_{\max}}{t_{\min}x_{\min}}$.
Therefore, applying the Chebyshev approximation of $h(v)=e^{-K e^{\lambda v}}$ and substituting $v=v(u)$ yields
\begin{align}
    \abs{ e^{-\frac{t^2 x^2 }{2}} - \sum_{i=0}^{L} p_i \, v(u)^{i} } \leq \varepsilon'
\end{align}
for all $t \in [t_{\min}, t_{\max}]$ and $x \in [x_{\min}, x_{\max}]$.
Since $v(u)$ is affine in $u$, we can expand $\sum_{i=0}^{L} p_i v(u)^{i}$ into a degree-$L$ polynomial in $u$, i.e.,
\begin{align*}
    \sum_{i=0}^{L} p_i v(u)^i = \sum_{i=0}^L \widetilde{p}_i u^i,
\end{align*}
and then expanding the term $\left( \log t + \log x \right)^{i}$ using the binomial theorem, we have
% \begin{align}
%     \abs{ e^{-\frac{t^2 x^2 }{2}} - \sum_{i=0}^{L} p_i \sum_{j=0}^{i} \binom{i}{j} (\log t)^{j} (\log x)^{i-j} } \leq \epsilon.
% \end{align}
% which can be rewritten as
\begin{align}
    \abs{ t e^{-\frac{t^2 x^2 }{2}} - \sum_{i=0}^{L} \sum_{j=0}^{i} \widetilde{p}_i \binom{i}{j} (\log t)^{j} (\log x)^{i-j} t } \leq t_{\max} \varepsilon'.
\end{align}
We are left to bound the size of the coefficients $ \widetilde{p}_i \binom{i}{j} $. 
Since, $v(u)$ is an affine change of variables, with coefficients $2/\lambda$ and $ \log(2K) / \lambda $, we have using the binomial theorem $ \abs{\tilde{p_i}  } \leq \max_i \abs{p_i} L \cdot 2^{2L} ((\log(2 K) /\lambda )^L \land 1)   $. 
To bound $p_i$, first note that since $h(u) = e^{-K e^{\lambda u}}$ is a bounded function on $[-1,1]$, by \eqref{eq:chebyshev-coeff}, all Chebyshev coefficients have magnitude at most $2$. 
% $|c_k| \leq 2 \max_{u\in[-1,1]} |h(u)| \leq 2$.
Further, we have that the coefficients of the Chebyshev polynomial $T_k(u)$ are bounded by $2^{k-1}$ in absolute value (see e.g., \citep{trefethen2019approximation}). 
We also trivially have $\binom{i}{j} \leq 2^i \leq 2^L$. 
Finally, for the functions in the approximation, we have  $   \abs{t (\log t)^j }     \leq \max\{ \log^L(1/t_{\min}) ,  t_{\max}  \log^L( t_{\max}    ) \}  $ and $ \abs{\log x}^{i-j} \leq \max\{   \log^L( x_{\max}   )   , \log^L(1/x_{\min}) \} $ for all $t\in [t_{\min}, t_{\max}]$ and $x\in [x_{\min}, x_{\max}]$.
% Define $ \hat{t}_{\max} = t_{\max} $ if $t_{\max} >1 $ and ${t}^{-1}_{\max}$ if $ t_{\max} \leq 1 $. 
% Similarly, define $\hat{x}_{\min}$. 
% Note that we can assume that $x_{\max} >1 $ and $t_{\min} \leq 1$ as can be seen from \cref{eq:parameter_choices}. 
Combining these three bounds, we have that an approximation of the form in \eqref{eq:approximation_goal_1} holds with bound on the size of the terms given by $A\cdot G \leq L 2^{4L} {t}_{\max}   (\log( {t}_{\max} / t_{\min}) )^{2L}  (\log( x_{\max} / {x}_{\min}) )^{2L} $. 
Further, there are at most $L^2$ such terms in the approximation.  
Setting $\varepsilon' = \varepsilon / t_{\max}$, and plugging into \cref{lemma:generalized-moment-matching}, we conclude that the $L_{\infty}$ covering number of $\mathcal{F}$ is bounded by
\begin{align*}
& \log N(\varepsilon, \mathcal{F}, L_{\infty}([x_{\min}, x_{\max}])) \\ &\leq O\bpth{ L^2 \log\frac{A G L^2}{\varepsilon} } \\
&= O\bpth{ L^2 \log\bpth{ \frac{ 2^{2L} t^2_{\max}   (\log( {t}_{\max} / t_{\min}) )^{2L}  (\log( x_{\max} / {x}_{\min}) )^{2L} L^3  } { \varepsilon} }  } \\ 
% & = O( L^2 + L \log(t_{\max}/\epsilon) ) \\ 
 &= O \bpth{ \log^3 \bpth{\frac{x_{\max } t_{\max} }{ x_{\min} t_{\min} \varepsilon  } } \cdot \log^4 \log \bpth{ \frac{ x_{\max } {t}_{\max} }{ {x}_{\min} t_{\min} \varepsilon  } }    } 
\end{align*}
as required. 

% Multiplying both sides by $t

% From \cref{lem:chebyshev_approximation}, we have that 

% We combine \cref{lemma:generalized-moment-matching} and \cref{lem:chebyshev_approximation} to finish the proof of \cref{lemma:key-covering}.

% \subsection{Proof of \cref{thm:metric-entropy}}

% \subsection{Finishing the Proof of \cref{lemma:density_estimation}} \label{sec:final_proof}

\subsection{Proof of \Cref{lemma:generalized-moment-matching}}
By Carath\'eodory's theorem, for any $H\in \calP([t_{\min}, t_{\max}])$, there exists a distribution $H'\in \calP([t_{\min}, t_{\max}])$ with at most $O(L)$ atoms such that $\bE_{t\sim H}[a_k(t)] = \bE_{t\sim H'}[a_k(t)]$ for all $k\in [L]$. By discretizing the support of $H'$ into a geometric grid $\{t_{\min}, e^{\delta} t_{\min}, \dots, t_{\max}\}$, and the weights of $H'$ into a $\delta$-net on the simplex under $\|\cdot \|_1$, with $\delta = \frac{\varepsilon}{2AGL}$, this forms a finite set $\calH$ of size $\exp(O(L\log\frac{AGL}{\varepsilon}))$. In addition, if $H'=\sum_{i=1}^m w_i\delta_{t_i}$ and $H''=\sum_{i=1}^m w_i'\delta_{t_i'}$ with $\max_{i\in [m]}|\log t_i-\log t_i'| \le \delta$ and $\|w-w'\|_1 \le \delta$, we have
\begin{align*}
 |\bE_{t\sim H'}[a_k(t)] - \bE_{t\sim H''}[a_k(t)]| 
& \le \Big| \sum_{i=1}^m w_i(a_k(t_i) - a_k(t_i')) \Big| + \Big| \sum_{i=1}^m (w_i-w_i')a_k(t_i') \Big| \\
&\le \sum_{i=1}^m w_i A|\log t_i - \log t_i'| + \|w-w'\|_1\cdot A \le 2\delta A. 
\end{align*}
Since $\delta = \frac{\varepsilon}{2AGL}$ and $|g_k(x)|\le G$, we conclude that any $H'\in \calP([t_{\min},t_{\max}])$ with at most $O(L)$ atoms can be approximated by some $H''\in \calH$ such that
\begin{align*}
\sup_{x\in [x_{\min},x_{\max}], t\in [t_{\min},t_{\max}]} \Big| \sum_{k=1}^L \bE_{t\sim H'}[a_k(t)] g_k(x) - \sum_{k=1}^L  \bE_{t\sim H''}[a_k(t)] g_k(x) \Big| \le \varepsilon. 
\end{align*}
By triangle inequality, this finite set $\calH$ induces a $2\varepsilon$-covering of $\calF$.

\section{Additional Details in \Cref{sec:numerics}}

\subsection{Proof of \Cref{lemma: structural}}

Since $X_i\neq 0$ for all $i\in [n]$, the function $G\mapsto f_{0,G}(X_i)$ has a finite upper bound depending only on $X_i$, so the log-likelihood $\sum_{i=1}^n \log f_{0,G}(X_i)$ cannot reach $+\infty$. Then standard compactness argument shows the existence of the NPMLE $\widehat{G}$, and by the strict concavity of $x\mapsto \log x$, the vector of densities $(f_{0,\widehat{G}}(X_1),\dots,f_{0,\widehat{G}}(X_n))$ is unique. Denote it by $(f_1,\dots,f_n)$. 
    
By the KKT condition \eqref{eq:KKT}, $\supp(\widehat{G})$ is a subset of the set of global maximizers of $\sigma\mapsto D_{\widehat{G}}(\sigma)$. By differentiation, the set of critical points of $\sigma\mapsto D_{\widehat{G}}(\sigma)$ is
\begin{align}\label{eq: support}
    C := \bsth{ \sigma > 0 : \frac{1}{n}\sum_{i=1}^n \frac{f_i}{\sqrt{2\pi}}e^{-X_i^2/2\sigma^2}\bpth{\frac{X_i^2}{\sigma^2} - 1} = 0}.
\end{align}
Clearly, $C\subseteq [\min_i|X_i|, \max_i |X_i|]$, so the same holds for $\supp(\widehat{G})$. By combining repeated appearances of $|X_i|$, WLOG we assume that $|X_1|,\dots,|X_n|$ are distinct. Next we show that $C$ is a finite set, with $|C|\le 2n-1$. Therefore, for some distinct constants $c_1,c_2,\dots,c_n$ and a non-zero vector $(d_1,\dots,d_{2n})$, 
\begin{align*}
\bsth{\frac{1}{\sigma^2}: \sigma\in C} \subseteq \bsth{x>0: \sum_{i=1}^n( d_i + d_{n+i}x)e^{c_ix}  = 0 }.
\end{align*}
To proceed, we recall the following result taken from \citep[Page 48]{polya1925aufgaben}. 

\begin{lemma}\label{lemma:chebyshev}
Let $c_1,\dots,c_n\in \bR$ be distinct, and $P_i(x)$ be a polynomial in $x$ with degree $m_i-1$. Then the equation $\sum_{i=1}^n P_i(x)e^{c_ix}=0$ has at most $(\sum_{i=1}^n m_i)-1$ real solutions. 
\end{lemma}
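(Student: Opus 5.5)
We argue by induction on $N:=\sum_{i=1}^n m_i$, using Rolle's theorem to remove one exponential block at a time. It is convenient to prove a slightly stronger statement: if each $P_i$ has degree \emph{at most} $m_i-1$ and not all $P_i$ are identically zero, then $F(x)=\sum_{i=1}^n P_i(x)e^{c_ix}$ has at most $N-1$ real zeros. If some $P_i\equiv 0$, we drop that index; this decreases $N$ and only strengthens the bound. So we may assume every $P_i\not\equiv 0$.

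\emph{Base case} $n=1$. Here $F(x)=P_1(x)e^{c_1x}$, and $e^{c_1x}$ never vanishes. So the zeros of $F$ are those of the nonzero polynomial $P_1$, of which there are at most $\deg P_1\le m_1-1=N-1$.

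\emph{Inductive step} $n\ge 2$. Multiplying by the positive function $e^{-c_nx}$ does not change the zero set, so it suffices to count the zeros of
\begin{align*}
\widetilde F(x)=\sum_{i=1}^{n-1}P_i(x)e^{(c_i-c_n)x}+P_n(x).
\end{align*}
Differentiate $\widetilde F$ exactly $m_n$ times. Since $\deg P_n\le m_n-1$, the polynomial term is annihilated.

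For $i<n$, write $d_i:=c_i-c_n\neq 0$. The derivative of $Q(x)e^{d_ix}$ is $(Q'(x)+d_iQ(x))e^{d_ix}$. Because $d_i\ne 0$, the factor $Q'+d_iQ$ has the same degree as $Q$ and is nonzero whenever $Q$ is nonzero. Hence
\begin{align*}
\widetilde F^{(m_n)}(x)=\sum_{i=1}^{n-1}Q_i(x)e^{d_ix},
\end{align*}
where $\deg Q_i=\deg P_i\le m_i-1$ and $Q_i\not\equiv 0$. The exponents $d_1,\dots,d_{n-1}$ are still distinct. By the induction hypothesis, $\widetilde F^{(m_n)}$ has at most $\sum_{i<n}m_i-1=N-m_n-1$ real zeros.

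By Rolle's theorem, if a smooth function has $k$ distinct real zeros, its derivative has at least $k-1$ of them, one strictly between each consecutive pair. Applying this $m_n$ times, if $\widetilde F$ had at least $N$ distinct real zeros, then $\widetilde F^{(m_n)}$ would have at least $N-m_n$ real zeros. This contradicts the bound just obtained. Therefore $F$ has at most $N-1$ real zeros.

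The only point needing care is that the differentiated function remains a nontrivial combination of the same form. This is exactly where distinctness of the $c_i$ enters: it guarantees $d_i\neq0$, so differentiation preserves both the degree of each $Q_i$ and the fact that it is nonzero. The rest of the argument is routine.
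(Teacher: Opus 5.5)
Your proof is correct. The paper gives no proof of its own and only cites P\'olya--Szeg\H{o}; your induction is the standard argument for this classical fact: multiply by $e^{-c_nx}$, differentiate $m_n$ times to annihilate $P_n$, use $c_i-c_n\neq 0$ to keep every other block nonzero and of the same degree, then count zeros with Rolle. Your strengthened form (degrees at most $m_i-1$, not all $P_i\equiv 0$) is also exactly what the paper needs when it applies the lemma to a nonzero vector $(d_1,\dots,d_{2n})$ and to $z\neq 0$, since there some blocks may vanish or have lower degree.
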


By \Cref{lemma:chebyshev}, the above equation has at most $2n-1$ solutions. This shows that $|C|\le 2n-1$. Finally, since the continuous map $\sigma\mapsto D_{\widehat{G}}(\sigma)$ must have at least one critical point between two global maximizers, we conclude from the upper bound on $|C|$ that $|\supp(\widehat{G})| \le n$. 

Finally we show the uniqueness of $\widehat{G}$. Since the map $\sigma\mapsto D_{\widehat{G}}(\sigma)$ only depends on $\widehat{G}$ through the likelihood vector $(f_1,\dots,f_n)$, the set $D$ of its global maximizers is a fixed set for all versions of $\widehat{G}$. In addition, $|D|\le n$, so we can write $D=\{\sigma_1,\dots,\sigma_m\}$ with $m\le n$, and any NPMLE $\widehat{G}$ takes the form $\sum_{j=1}^m w_j\delta_{\sigma_j}$. It remains to show the uniqueness of $(w_1,\dots,w_m)$. Now by the uniqueness of $(f_1,\dots,f_n)$, $(w_1,\dots,w_m)$ is a solution to the linear system
\begin{align*}
\sum_{j=1}^m w_j \frac{1}{\sigma_j}\varphi(\frac{X_i}{\sigma_j}) = f_{0,\widehat{G}}(X_i) = f_i, \quad \forall i\in [n].
\end{align*}
To prove uniqueness, it suffices to show that the matrix $A=(A_{ij})_{i\in [n], j\in [m]}$ with $A_{ij} = \frac{1}{\sigma_j}\varphi(\frac{X_i}{\sigma_j})$ has full column rank. By scaling the columns and taking $m\times m$ submatrices, it further suffices to proving that $B=(B_{ij})\in \bR^{m\times m}$ with $B_{ij} = \varphi(\frac{X_i}{\sigma_j})$ has a full rank. Assuming the contrary, then $Bz=0$ would have a non-zero solution $z\in \bR^m$, and the map
\begin{align*}
t \mapsto \sum_{j=1}^m \exp(-\frac{t}{\sigma_j^2})z_j 
\end{align*}
would have at least $m$ distinct zeros. Since $\sigma_1,\dots,\sigma_m > 0$ are distinct, this is a contradiction to the statement of \Cref{lemma:chebyshev}. This concludes the uniqueness of $\widehat{G}$.

\subsection{Additional Experimental Details in \Cref{sec:numerics}}\label{appendix: extra simul}
We provide additional experimental details related to our implementation, as well as figures for the equal and quadratic variance cases of heteroskedastic mean estimation. 
For all experiments, we set $\sigmin = 0$ and $\sigmax = \infty$, but in general one can initialize with prespecified values for $(\sigmin, \sigmax)$. 
For every instance of the Frank--Wolfe algorithm (i.e., estimating $\widehat{G}$ with a fixed $\mu$), we use \Cref{lemma: structural} to set a data-driven support $I = [\min_i |X_i-\mu|, \max_i |X_i-\mu|]$ for the new atoms $\sigma_t$, and apply a grid search over $N=5,000$ points in $I$ to find the location of the new atom $\sigma_t$. In addition, we initialize the atoms of $\widehat{G}_0$ to be $5$ evenly distributed points in $I$. When finding the MLE estimator $\widehat{\mu}$ given the current $\widehat{G}$, we also apply a grid search over $N=5,000$ points in the interval $[\min_{i\in[m]} X_i, \max_{i\in [m]} X_{i}]$. 
%For every instance of the successive maximization algorithm, we ensure that any new atom added lies in the prespecified interval $[\sigma_{\min}, \sigma_{\max}]$ %\YH{this sentence suggests that we're using different $\sigmin, \sigmax$ at every step? why not combine with the next sentence and provide a detailed specification.}% 
%and truncate them if they lie outside the data-dependent values $\min_{i\in [n]}|X_{i}|$ and $\max_{i\in[n]}|X_{i}|$ (see \Cref{lemma: structural}). 
% At initialization,  For both finding the MLE estimator $\widehat{\mu}_t$ given the current iterate $\widehat{G}_t$ and updating the new atom $\sigma_t \in \argmax_{\sigma > 0}D_{\widehat{G}_{t-1}}(\sigma; X^{n}-\mu)$, we rely on a fine grid search over $N=5000$ points in the interval $[\min_{i\in[m]} X_i, \max_{i\in [m]} X_{i}]$. %While computationally expensive, we have found that our empirically works well in practice. 
%We also set the initial grid for the prior in the alternating Frank--Wolfe algorithm to consist of $n=5$ evenly distributed points in $[\sigma_{\min}, \sigma_{\max}]$, with uniform initial weights. 
Therefore, aside from typical tolerance thresholds, our algorithm is completely tuning-parameter free. Code for our implementation and simulation results can be found in the repository \url{https://github.com/AshettyV/NPMLE}, and we also included the experimental results for the cases of equal and quadratic variance in \Cref{cor:examples}.  

\begin{figure}[ht]
    \centering
    \begin{subfigure}{0.48\linewidth}
        \centering
        \includegraphics[width=\linewidth]{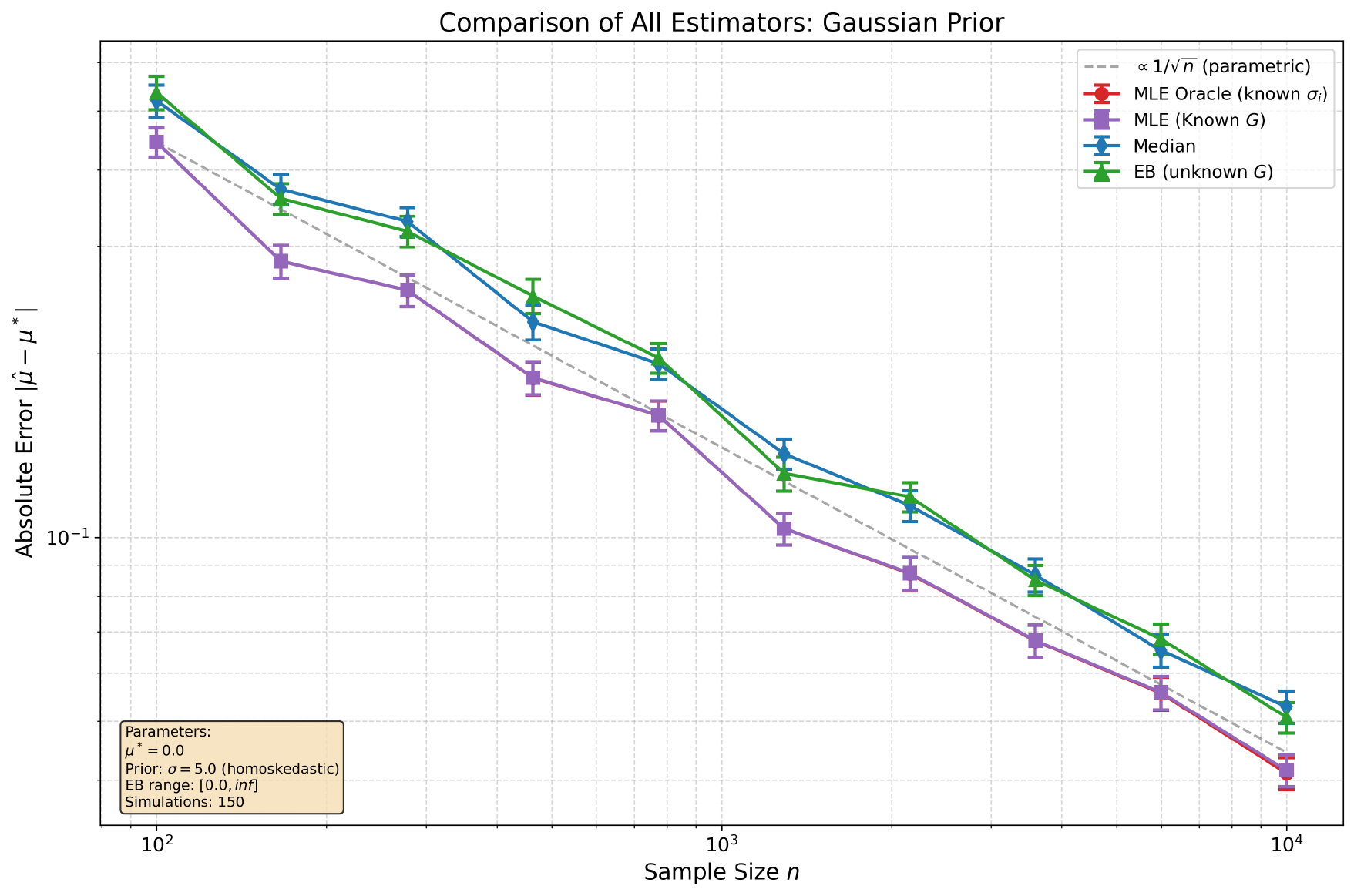}
        \caption{$P_1 = \dots = P_n = \mathcal{N}(0,5)$}
    \end{subfigure}
    \hfill
    \begin{subfigure}{0.48\linewidth}
        \centering
        \includegraphics[width=\linewidth]{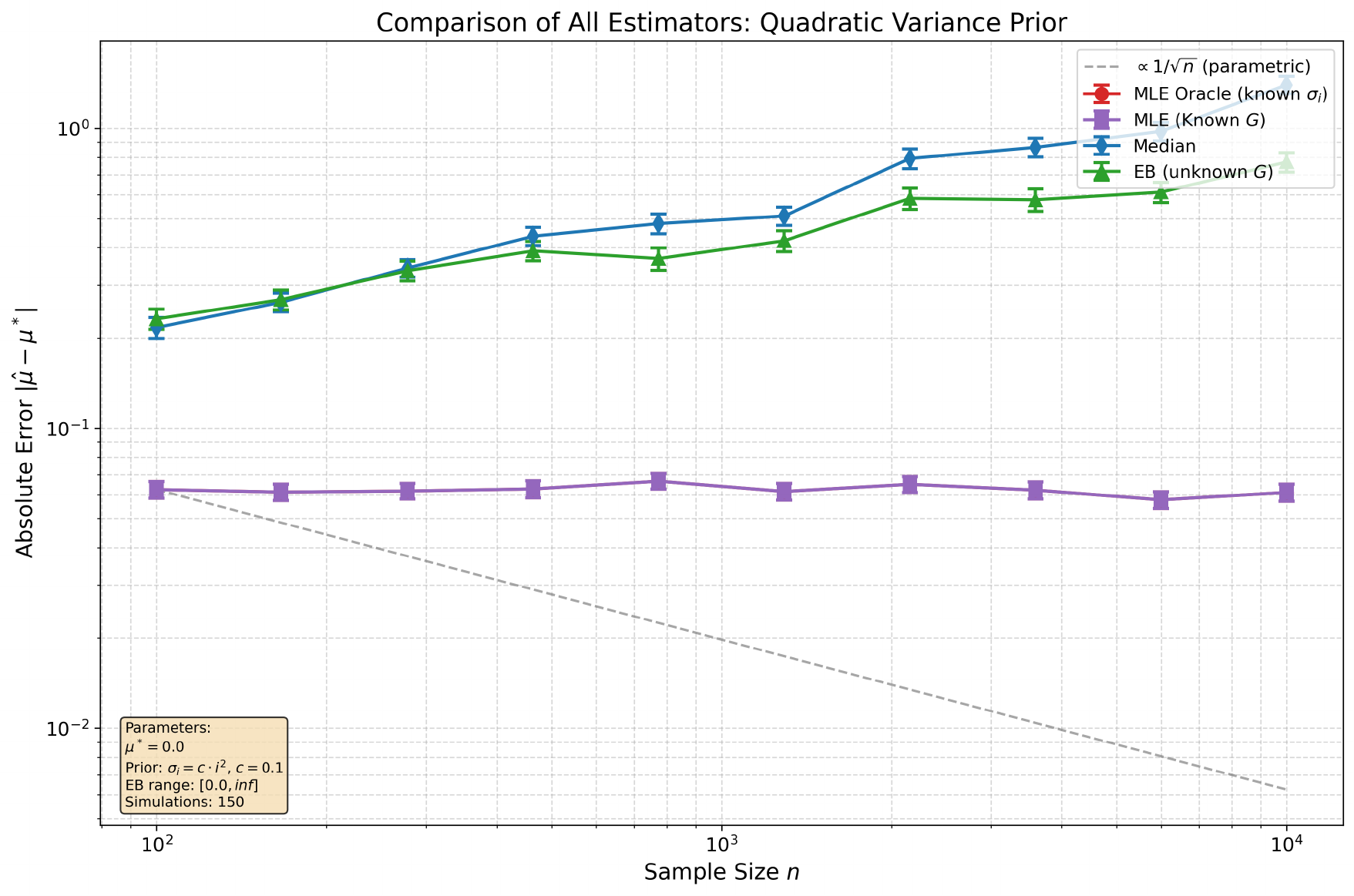}
        \caption{$P_i = \mathcal{N}(0, \frac{i^2}{10}), \quad 1\leq i \leq n$}
    \end{subfigure}
    \caption{Average absolute estimation error for $\widehat{\mu}^{\text{EB}}$, sample median, and two oracle MLEs over $N=150$ simulations, under the equal variance model where $P_1 = \dots = P_n = \mathcal{N}(0,5)$ and the quadratic variance model $P_i = \mathcal{N}(0, \frac{i^2}{10})$ for $i\in [n]$.}
    %\label{fig:sos_prior}
\end{figure}

\bibliographystyle{alpha}
% \IfFileExists{../refs.bib}{\bibliography{../refs}}{\bibliography{refs}}
\bibliography{refs.bib}

\end{document}